\def\pist#1#2{\noindent\hangindent 2em\hangafter1\hbox to 2em{#1\hfil~~}#2}
\newtheorem*{remark}{Remark}
\newtheorem{theorem}{Theorem}[section]
\newtheorem{lemma}[theorem]{Lemma}
\newcommand{\be}{\begin{equation}}
\newcommand{\ee}{\end{equation}}
\newcommand{\pone}{p_{1}(v_c)}
\newcommand{\ptwo}{p_{2}(v_c)}
\newcommand{\qone}{q_{1}(v_c)}
\newcommand{\qtwo}{q_{2}(v_c)}
\newcommand{\bsp}{\hspace{-.05in}}
\newcommand{\DETAILS}[1]{}
\newcommand{\I}{\mathrm{i}}
\newcommand{\e}{\eqref}
\newcommand{\revc}[1]{{\color{red}\sout{}}}  
\newcommand{\revmike}[1]{{\color{cyan}}} 
\newcommand{\cblue}[1]{#1} 
\providecommand{\keywords}[1]
{
  \small	
  \textbf{\textit{Keywords---}} #1
}
\title{Exact periodic solutions of the generalized Constantin-Lax-Majda equation with  dissipation}
\author{Denis A. Silantyev$^1$, Pavel M. Lushnikov$^2$, Michael Siegel$^3$, and David M. Ambrose$^4$}
\begin{document}

\maketitle

\begin{center}
{$^1$Department of Mathematics, University of Colorado, Colorado Springs, CO
80918, USA, dsilanty@uccs.edu
\\
$^2$ Department of Mathematics and Statistics, University of New Mexico, MSC01
1115, Albuquerque, NM 87131, USA, plushnik@unm.edu\\
$^3$Department of Mathematical Sciences and Center for Applied Mathematics and
Statistics, New Jersey Institute of Technology, Newark, NJ 07102, USA, michael.s.siegel@njit.edu\\
$^4$Department of Mathematics, Drexel University, Philadelphia, PA 19104, USA, dma68@drexel.edu}
\end{center}

\begin{abstract}  We present exact pole dynamics solutions to the generalized Constantin-Lax-Majda (gCLM) equation in a periodic geometry  with dissipation $-\Lambda^\sigma$, where its spatial Fourier transform is $\widehat{\Lambda^\sigma}=|k|^\sigma$.  The gCLM equation is a simplified model for singularity formation in the 3D incompressible Euler equations. It includes an advection term with parameter $a$, which allows different relative weights for advection and vortex stretching.  There has been intense interest in the gCLM equation, and it has served as a proving ground for the development of methods to study singularity formation in the 3D Euler equations.  Several exact solutions for the problem on the real line have been previously found by the method of pole dynamics, but only one such solution has been reported for the periodic geometry.  We derive new periodic solutions for $a=0$ and $1/2$ and $\sigma=0$ and $1$, for which a closed collection of  (periodically repeated)  poles evolve in the complex plane.  Self-similar finite-time blow-up of the solutions is analyzed and compared for the different values of $\sigma$, and to a global-in-time well-posedness theory for solutions with small data presented in a previous paper of the authors.  Motivated by the exact solutions, the well-posedness theory is extended to include the
case $a=0$, $\sigma \geq 0$. Several interesting features of the solutions are discussed.

\end{abstract}

{\keywords{fluid dynamics, self-similar finite-time singularity formation, complex singularities, global existence, pole solutions, pole dynamics}}

\section{Introduction}

The question of whether solutions to the 3D incompressible Euler and Navier-Stokes equations develop a finite-time singularity from smooth initial data
is one of the most important problems in mathematical fluid dynamics. There has been exciting recent progress on this question for the 3D  Euler equations, most notably by Elgindi \cite{Elgindi2021}, who shows finite-time singularity formation for  $C^{1, \alpha}$ initial velocity data,  and Chen and Hou \cite{ChenHou2022}, who give a computer-aided proof of  finite-time blowup
for  $C^\infty$ initial data.
These results have directly stemmed from
ideas and techniques developed in the analysis of simplified 1D models.
One of the earliest and most important of the 1D models is the Constantin-Lax-Majda (CLM) equation.

The CLM equation was proposed in \cite{ConstantinLaxMajda} as a model for vortex stretching and singularity formation in the 3D Euler equations.  It was later generalized by DeGregorio \cite{DeGregorio}, Okamoto et al. \cite{Okamoto2008}, and Schochet \cite{Schochet} to include advection and dissipation.
In this paper we consider
the generalized Constantin-Lax-Majda (gCLM) equation with dissipation, which  is given by
\begin{equation}
\begin{aligned}\label{CLM}
&\tilde{\omega}_{t}  =-au \tilde{\omega}_{x}+\tilde{\omega} {\cal{H}} \tilde{\omega}
-\nu  {\Lambda}^{\sigma} \tilde{\omega},
~ ~~{\tilde{\omega}}  \in \mathbb{R}, x \in \mathbb{S}, t>0, \\
&u_x =\cal{H} \tilde{\omega}, \\
&\tilde{\omega}(x,0)=\tilde{\omega}_{0}(x),
\end{aligned}
\end{equation}
where the first, second, and third terms on the right-hand side of the first line represent advection, vortex stretching, and dissipation, respectively.
This equation is often considered on the real line but here we take the domain to be the circle $\mathbb{S}$,
 i.e., the interval $x \in [-\pi, \pi]$, extended periodically. The `vorticity'  is decomposed as
 \be
  \tilde{\omega}(x,t)  =\omega_{av}(t) + \omega(x,t), \label{eq:omega_decomp}
  \ee
   where $\omega_{av}(t)$ denotes the
average of $\omega$ over $[-\pi,\pi]$.
The operator $\cal{H}$ is the Hilbert transform, which in the periodic case takes the form
\be  \label{Hilbert_periodic}
{\cal H} f (x)= \frac{1}{2 \pi} PV \int_{-\pi}^{\pi}  f(x') \cot \left(\frac{x-x'}{2} \right) \  dx'.
\ee
The Hilbert transform has Fourier symbol
\begin{equation}\nonumber
\hat{\cal{H}}=-\I \; \mathrm{sgn}(k),
\end{equation}
which also  implies that ${\cal{H}}$ has the representation
\be \label{Hilbert_rep}
{\cal{H}} f = -\I (f_+-f_-),
\ee
where $f_+=\sum_{k>0} \widehat{f}_k e^{\I k x}$ and $f_-=\sum_{k<0} \widehat{f}_k e^{\I k x}$ are projections onto the upper and lower complex analytic components of $f$, respectively, in the complex analytic extension of the real line of $x$ into the complex plane.   The term $- \Lambda^\sigma \tilde{\omega}$ represents a generalized dissipation, in which  the operator $\Lambda$ is given by ${\cal H} \partial_{x}$
so that the symbols of $\Lambda$ and $\Lambda^{\sigma}$ are
\begin{equation}\nonumber
\hat{\Lambda}(k)=|k|,\qquad \widehat{\Lambda^{\sigma}}(k)=|k|^{\sigma}.
\end{equation}
Note that $-\Lambda^2$ gives the usual diffusion operator $\partial_{xx}$.
The equation $u_{x}={\cal H}{\tilde{\omega}}$ defines $u$ up to its mean, and unless stated otherwise we take the mean of $u$ to equal zero.
The parameters $a$, $\sigma$ and $\nu$ satisfy $a\in\mathbb{R}$, $\sigma \geq 0$ and $\nu \geq 0$.

Note that for any periodic function $f,$ we have $\int_{\mathbb{S}} f {\cal H}(f)\ dx=0$ and additionally $u  \tilde{\omega}_x = (u \tilde{\omega})_x - \tilde{\omega} \mathcal{H} \tilde{\omega}$ has zero mean. Thus  when $\sigma>0$ the mean of $\tilde{\omega}$ is
preserved under the evolution (\ref{CLM}), but for $\sigma=0$ the mean generally evolves with time.
Since $({\omega_{av}})_{x}=\mathcal{H}({\omega}_{av})=0$
we can use \e{eq:omega_decomp} to  rewrite (\ref{CLM}) as
\begin{equation}\nonumber
(\omega+\omega_{av})_{t}+au\omega_{x}=\omega \mathcal{H}\omega+{\omega}_{av} \mathcal{H}\omega- \nu \Lambda^{\sigma} (\omega+\omega_{av})  \mbox{~and~} u_x=\mathcal{H} \omega,
\end{equation}
with initial data $\omega(x,0)=\omega_0(x)$ and $\omega_{av}(0)$,
which are sometimes used instead of (\ref{CLM}).

The gCLM model  (\ref{CLM}) arises in several contexts beyond singularity formation in the 3D Euler equations. It is an exact 1D model for the evolution of stress in an Oldroyd B fluid \cite{ConstantinSun}, and is equivalent to a version of the surface quasi-geostrophic equation from geophysics \cite{CastroCordoba}.
The advection term with parameter $a$ was introduced by Okamoto et al.  \bsp  \cite{Okamoto2008} to study different relative weights of advection and vortex stretching, $\tilde{\omega} \mathcal{H} \tilde{\omega}$.  This is motivated by studies of singularity formation in fluid systems which show that advection can have an unexpected smoothing effect  \cite{hou2014finite, Iyer2021, Lei2019, OkamotoOkhitani}.
 In this paper, we consider specific values of the dissipation exponent,  $\sigma=0$ and $1$.  The `marginal' dissipation $\sigma=0$  has a physical significance; it arises in the aforementioned 1D model for the evolution of stress in an Oldroyd B fluid \cite{ConstantinSun}.  We generally allow nonzero values of the average $\omega_{av}(t)$. This is also relevant to the Oldroyd B model, which has an extra constant `forcing' term on the right-hand side of (\ref{CLM}) that generates a nonzero average in $\tilde{\omega}$.

There has been strong interest in elucidating solutions to (\ref{CLM}) which exhibit finite-time singularity formation. Such singularities are often found  to
be locally self-similar  with the form (see, e.g.,  \cite{Chen2020Singularity, Lushnikov_Silantyev_Siegel,AmbroseLushnikovSiegelSilantyev})%
\cblue{\begin{equation}\label{self-similar1}
{\omega}=\frac{1}{\tau^\beta}f\left ( \xi \right ), \ \xi=\frac{x-x_c(t)}{\tau^\alpha}, \ \tau=t_c-t,
\end{equation}
in a space-time neighborhood of $(x_c(t_c),t_c)$, where $t_c>0$\ is the singularity time, $x_c(t) \in \mathbb{R}$ is the time-dependent  spatial center location of the collapsing solution (which reaches $x=x_c(t_c)$ at $t=t_c$), and $\alpha$, $\beta$ are real similarity parameters. Typically $x_c\equiv const$, which is mostly the case in \cite{Chen2020Singularity, Lushnikov_Silantyev_Siegel,AmbroseLushnikovSiegelSilantyev} and the current work.
 }
\cblue{However, self-similar solutions of the type \e{self-similar1}  with nontrivial
$x_c(t)$ can also occur, and  
are known in other systems, e.g.,  collapse of a spherical shell  in the three-dimensional cubic nonlinear Schr\"odinger equation (NLSE) \cite{DegtyarevZakharovRudakovJETP1975} and the collapse of a soliton-like self-similar solution in the critical
Korteweg-de Vries (KdV) equation \cite{PelinovskyGrimshawPhysicaD1996}, see also more recent results and review in \cite{ChapmanKavousanakisCharalampidisKevrekidisKevrekidisNonlinearity2024}. In the KdV equation case, the center of mass of the collapsing self-similar solution moves with an increasing speed, reaching infinite speed at the collapse time.  In the  NLSE case, the spatial width of the spherical shell shrinks to zero while the amplitude of the solution diverges inside the shell; the  shell approaches the origin hitting it exactly at the collapse time.  Numerical evidence for the latter type of self-similar collapse has recently  been found in the axisymmetric Euler equations with swirl \cite{HouHuang2022,HouHuang2023,Hou2023Potential}.
In the CLM equation with $a=\nu=0$, Huang et al.\ \cite{HuangTwoScale2024}
showed self-similar finite-time singularity formation of the form (\ref{self-similar1}) with $x_c(t)={x}_c(t_c) +r_c (t_c-t)^\delta$ for constant $r_c \in \mathbb{R}$ and $0<\delta<\alpha$.  Such a `two-scale' self-similar solution is also given by equation (72) in  \cite{AmbroseLushnikovSiegelSilantyev}, for which $x_c(t)$ is the linear function of time corresponding to the motion of the spatial center location of the collapsing solution with constant velocity.   We confirm here
 that the two-scale blowup seen in   \cite{HuangTwoScale2024} also occurs in the periodic gCLM problem with dissipation for various $\alpha, \beta$ and $x_c(t)$.
}


When $a=0$, Constantin, Lax, and Majda \cite{ConstantinLaxMajda} obtained a closed form solution to (\ref{CLM}) for the inviscid problem (with $\nu=0$) which is valid in both the real line and periodic geometry. Their solution exhibits finite time singularity formation of the form (\ref{self-similar1}) for a large class of initial data.
Since the original work of \cite{ConstantinLaxMajda}, several exact solutions for the real line problem have been found by the method of pole dynamics \cblue{\cite{Chen2020Singularity, Lushnikov_Silantyev_Siegel,AmbroseLushnikovSiegelSilantyev}}.  There are numerous examples of pole dynamics solutions in both Hamiltonian and dissipative systems, see, e.g.,  \cite{BakerSiegelTanveer,CalogeroBook,LushnikovPhysLettA2004,LushnikovZubarevPRL2018,Senouf1996}.

The exact solutions of the gCLM equation have played an important role in more general analysis of finite-time singularity formation.
Elgindi and Jeong \cite{Elgindi2020} and Chen et al. \cite{Chen2021}   prove that for $\epsilon-$small values of $a>0$ and $\nu=0$,  there  exists
singularities of the form
(\ref{self-similar1}) with $\beta=1$ and $\alpha$ approaching $1$ in the limit $a\to 0^+$. Their method involves showing the nonlinear stability of an approximate self-similar profile when $a>0$, with the approximate profile provided by the exact $a=0$ solution of \cite{ConstantinLaxMajda}. Similarly,  Chen \cite{Chen2020Singularity} shows that for the problem on the real line, there exists self-similar blowup when $a$ is close to $1/2$ and $\sigma=2$.    His method is also based on establishing nonlinear stability of an approximate self-similar solution. The approximate profile is provided by an exact pole dynamics solution of the inviscid problem for $a=1/2$ \cite{AmbroseLushnikovSiegelSilantyev,Chen2020Singularity, Lushnikov_Silantyev_Siegel}.  These results highlight the importance of  exact solutions in analysis of finite-time singularity formation for  the gCLM model.

There is a rather large literature on finite time singularity formation in the inviscid problem for (1) in which $\nu=0$.
Recently, extensive numerical and analytical results on singularity formation covering a wide range of the advection parameter $a$ have been obtained by \cite{Lushnikov_Silantyev_Siegel} and \cite{Huang2024}.
We refer the reader to those papers for  reviews of the latest developments in the inviscid problem.

The current paper focuses on the problem with nonzero dissipation ($\nu \ne 0$) for which
less is known about solutions to (\ref{CLM}).  Schochet \cite{Schochet} constructs an explicit pole dynamics solution on the real line for $a=0$ and $\sigma=2$   which blows up in finite time. When $a=-1$, for which  (\ref{CLM}) is equivalent to the   Cordoba-Cordoba-Fontelos equation \cite{CCF}, finite time blowup is shown to occur for $\sigma<1/2$  \cite{Kiselev, LiRodrigo, SilvestreVicol},
whereas global well-posedness  is known for    $\sigma \geq 1$ \cite{CCF, Dong2008, Kiselev}.
Chen \cite{Chen2020Singularity} shows that for the problem on the real line, there exists a self-similar blowup when $a$ is close to $1/2$ and $\sigma=2$, and global well-posedness for  $\sigma \in [|a|^{-1}, 2]$ with $a<-1$. Ambrose et al. \cite{AmbroseLushnikovSiegelSilantyev} consider (\ref{CLM}) in the periodic setting for initial data in $L^2$ or in a Wiener algebra $B_0$, which describes the set of functions with Fourier coefficients in $l^1$.    They prove global-in-time existence of solutions with small-$\omega$ data and all $a$  when (i) $\sigma \geq 1$ and the initial data is in $B_0$, and (ii) $\sigma>1$ and the data is in $L^2$.   Solutions become analytic (or $C^\infty$ in the case of $L^2$ data) for $t>0$.
We emphasize that only data for the spatially varying component $\omega$ needs to  be small for this result to apply; $\omega_{av}$ can have any magnitude.
Ambrose et al. \cite{AmbroseLushnikovSiegelSilantyev} also derive a new pole dynamics solution to the periodic problem for $a=0$ and  $\sigma=0$
which consists of a periodic array of complex conjugate (c.c.) pole singularities.  This solution can form a finite-time singularity of the form (\ref{self-similar1}), even for data which is arbitrarily small in $L^2$, and provides a contrast to the global existence theory for $\sigma \geq 1$.

Other than the exact solution for $a=0$  in the original paper of Constantin, Lax, and Majda and the aforementioned construction of \cite{AmbroseLushnikovSiegelSilantyev}, we are unaware of any explicit solutions to (\ref{CLM}) in a periodic setting. The focus of this paper is to derive new periodic pole dynamics solutions which provide further insight on the conditions for which (\ref{CLM}) is well-posed globally in time. The new results include:  (i) solutions for $a=0$ and $\sigma=0$ and $1$ expressed as a periodic array of c.c.  poles in $\omega$, and  (ii) solutions for $a=1/2$
and $\sigma=0$ and $1$ expressed as a periodic array of c.c.  simple and double poles.

A challenge in constructing pole dynamics solutions is obtaining a closed collection of complex singularities that exactly represent the dynamics.  In particular,  higher order poles that are generated by derivatives and nonlinearities
must be avoided.  We do this by making the special choice  $a=0$ or  $1/2$ and $\sigma=0$ or $1$,  for which higher order poles cancel out.
When $a=1/2$,   the advection term generates log singularities out of poles, which cause additional difficulties. We avoid these by utilizing a special combination of single and double poles in $\omega$  for which the log singularities cancel out.

Our pole dynamics solutions are specified in terms of nonlinear systems of ODEs that give the pole locations and amplitudes.  In most cases we explicitly solve the ODEs and characterize the similarity exponents.  For one case,  $a=1/2$ and $\sigma=0$, the exact solution is derived in implicit form, and  substantial additional analysis is performed to obtain information on singularity formation.  When an explicit solution is not available,   phase-plane analysis is employed to characterize the dynamics. A difficulty in this analysis is the presence of `pathological' trajectories, which can exit regions which are otherwise invariant under the dynamics. The phase-plane analysis is facilitated by a transformation to special variables which avoids the problematic trajectories and clarifies the dynamics.


We  find that global existence versus finite-time singularity formation for our pole dynamics solutions depends on the initial location and amplitude of poles in $\mathbb{C}$. When $\sigma=1$ the solutions determined here   exist globally in time for all sufficiently small data in $L^2$ and in $B_0$,
but  blow up in finite time for large enough data.  This result is consistent with the theory of \cite{AmbroseLushnikovSiegelSilantyev} and further proves that blowup can and does occur for sufficiently large data. It also illustrates a difference with the problem on $\mathbb{R}$,  in which exact pole dynamics solutions constructed in \cite{AmbroseLushnikovSiegelSilantyev}  can blow up for arbitrarily small data  in $L^2$ and in $B_0$.

In contrast,
we  find finite-time blowup in the periodic problem for arbitrarily small $L^2$  initial data when $\sigma=0$. We still find global existence for sufficiently small data in the  Wiener norm for $\sigma=0$. This motivates revisiting the $B_0$ global existence theory from \cite{AmbroseLushnikovSiegelSilantyev}, which there applies for $\sigma \geq 1$.  We show that $a=0$  is a special case in which the  $B_0$ theory for the periodic problem can be extended to $\sigma \geq 0$ 
(our proof also applies to the real-line problem for $a=0,~\sigma=0$).  Overall, our analysis provides insight into the effect of dissipation on finite-time singularity formation in (\ref{CLM}).

The rest of this paper is organized as follows: Section \ref{sec:a=0} develops  pole dynamics solutions and more general analysis for $a=0$. This includes closed-form exact  solutions for $a=0,~\sigma=0$  in Section \ref{sec:a0sigma0},
and extension to $a=0$ of  the small-data  well-posedness theory  from  \cite{AmbroseLushnikovSiegelSilantyev}   (Sections \ref{sec:a=0_well-posedness} and \ref{sec:a=0_well-posedness_1}).
Pole dynamics solutions for $a=0,~\sigma=1$ are derived and analyzed in Section \ref{sec:a0sigma1}.
Section \ref{sec:a=one_half} considers  pole dynamics solutions for $a=1/2$. In Section \ref{sec:a0.5sigma0},  complete analysis of  an exact implicit solution for   $a=1/2,~ \sigma=0$ is provided. provided. Section \ref{sec:a0.5sigma1} presents the governing ODEs for  the pole dynamics solution in the case $a=1/2,~ \sigma=1$, which are studied by a phase-plane analysis in Section \ref{sec:phase-plane}.   Concluding remarks are given in  Section \ref{sec:Conclusions}.  Appendix \ref{sec:AppendixA} presents a phase-plane analysis for $a=1/2,~\sigma=0$, as a complement to implicit solution in Section \ref{sec:a0.5sigma0}.

\section{Pole dynamics solutions and analysis for $a=0$} \label{sec:a=0}

When $a=0$ the nonlinear advection term $a u \omega_x$ in  (\ref{CLM}) is absent which greatly simplifies the construction of  pole dynamics  solutions.  
We begin with this simplified case and  derive exact solutions for  a single pair of c.c. poles, for which a rather complete analysis of finite-time singularity formation can be provided. The governing PDE also allow for pole dynamics solutions consisting of $N$ pairs of  complex conjugate poles evolving in $\mathbb C$ for any integer $N \geq 1$, and we provide  evolution equations for the pole locations and amplitudes in this general case.

\cblue{In this section and throughout the remainder of the text} we use a decomposition of $\omega$ as follows
\begin{equation}\label{omegaplusminus}
\omega=\omega_+ + \omega_-,
\end{equation}
 where $\omega_+$ and $\omega_-$
   are projections onto the upper and lower analytic components of $\omega$, respectively, and recall  the solution $\tilde{\omega}$ to (\ref{CLM}) is given by \e{eq:omega_decomp}.  The real-valuedness of $\tilde{\omega}$ together with \e{eq:omega_decomp} implies that
   \begin{equation}\label{complexconjugationomega}
  \omega_+(x,t)=\overline{\omega_-(\overline{x},t)}
 \end{equation}
 and that  $\omega_{av}(t)$ is real, where the overbar denotes complex conjugate. Therefore the solution is completely specified by, say,  $\omega_-$ and $\omega_{av}.$

\cblue{Following  \cite{LushnikovStokesParIIJFM2016} we consider  the transformation
\begin{equation}\label{Xdef}
X=\tan \left( \frac{x}{2} \right)
\end{equation}
which  maps $(-\pi,\pi)$ to the real line in $X$. In the complex plane of $x$, \e{Xdef} is the conformal map of a vertical strip $-\pi<Re[x]<\pi$ onto $\mathbb{C}\setminus \left([\I,\I\infty)\bigcup [-\I,-\I\infty)\right ) $, i.e., the entire complex plane $\mathbb{C}$ of $X$ except two vertical segments  $[\I,\I\infty)$ and $ [-\I,-\I\infty)$. The inverse of \e{Xdef} has branch points at $X=\pm \I$ with corresponding branch cuts which are convenient to put along $[\I,\I\infty)$ and $ [-\I,-\I\infty)$.  Crossing any of these branch cuts corresponds to extending the strip $-\pi<Re[x]<\pi$ to adjacent strips  $-\pi+2\pi n<Re[x]<\pi+2\pi n, \ n\in \mathbb{N}$ along real axis. The boundary $Re[x] =\pm \pi$ of the vertical strips is mapped by \e{Xdef}  onto the branch cuts in $X$.   Since we restrict to periodic in $x$ solutions, there are no jumps at these branch cuts making them `invisible' and thus they are ignored below. It allows us throughout this paper to look for rational solutions in $X$ space which ensures $2\pi$ periodicity of these solution in $x$ space.
}

\cblue{We look for a real-valued solution of \e{CLM} in which $\omega_-$ possesses $N$  simple poles in the upper half-plane of  $X$, corresponding to $N$ periodically repeated poles in the $x$-plane:
\begin{equation}\label{N_poles}
\omega_-(x,t)=\sum_{k=1}^N \omega_{-1,k}(t)\left[\frac{1}{\tan(\frac{x}{2})-\I v_{c,k}(t)}- \frac{1}{-\I-\I v_{c,k}(t)}\right],~~Im[\omega_{av}(t)]=0.
\end{equation}
Here  $\omega_{-1,k}(t)$, and $v_{c,k}(t)$ are smooth complex valued functions of $t$ with $Re[v_{c,k}(t)]  \geq  0$.
The term $\frac{1}{-\I-\I v_{c,k}(t)}$ is subtracted inside the square brackets so
that $\omega_-(x,t)$ has  zero mean on $x \in [-\pi,\pi]$, i.e., $\int_{-\pi}^\pi{\omega_-(x,t)dx}=0$.   Simple poles to $\omega_+$  exist at  complex conjugate  locations in the lower half-plane.
}

\cblue{Collapse can occur when $Re[v_{c,k}(t_c)]=0$ or $Re[v_{c,k}(t_c)]=\infty$ for some $k$ and $t_c$, so that $\tan(\frac{x}{2})-\I v_{c,k}(t_c)=0$ at some real $x=x_{c,k}=2 \arctan(\I v_{c,k}(t_c))$.  In both instances a pole impinges on the real line in complex $x$-space. When  $v_{c,k}$ is real, $x_{c,k}$ is pure imaginary   for $0<v_{c,k}<1$  and  $x_{c,k}= \pm \pi+ \I Im(x_{c,k})$ for $1<v_{c,k}<\infty$, with $Im(x_{c,k}) \rightarrow 0$ as $v_{c,k} \rightarrow 0$ or $\infty$ and  $Im(x_{c,k}) \rightarrow \infty$ as $v_{c,k} \rightarrow 1$.}

\cblue{In the case $N=1$ of a single c.c.\  pair of poles, we denote $v_{c,1}$, $\omega_{-1,1}$  and $x_{c,1}$ simply by $v_{c}$, $\omega_{-1}$ and and $x_{c}$.}
We compute in this case  the norms of $\omega=\omega_+ + \omega_-$ that are relevant in the well-posedness theory of \cite{AmbroseLushnikovSiegelSilantyev}.
These are
\begin{equation}
\begin{split}
 \| \omega(\cdot,t) \|_{L^2}^2 &= 4\pi\frac{|\omega_{-1}(t)|^2}{Re[v_c(t)] |1+v_c(t)|^2}  \label{a0sigma0_periodic_solution_norms}\\
  \| \omega(\cdot,t) \|_{B_0}& =\frac{|\omega_{-1}(t)|}{Re[v_c(t)]}\left(\left|\frac{1 - v_c(t)}{1 + v_c(t)}\right| +1\right).
 \end{split}
 \end{equation}.


Following  \cite{AmbroseLushnikovSiegelSilantyev}, we are interested in whether a solution with arbitrarily small initial data in these norms can blowup  in finite time.   We observe that
the norm of  the initial data can be made arbitrarily small  by choosing $Re[v_c(0)]$ large enough.

\subsection{$a=0$ and $\sigma=0$} \label{sec:a0sigma0}

We take $a=0$ and  $\sigma=0$ in which case  (\ref{CLM}) becomes
\begin{equation} \label{mainEquationa0sigma0_periodic}
\tilde{\omega}_{t}=\tilde{\omega} \mathcal{H}(\tilde{\omega})-\nu \tilde{\omega}.
\end{equation}
Using $\tilde{\omega}=\omega_+ + \omega_-+\omega_{av}$ and the Hilbert transform representation \e{Hilbert_rep} we can rewrite \e{mainEquationa0sigma0_periodic} as:
\begin{equation} \label{mainEquationa0sigma0_periodic_lower}
(\omega_-)_t=\I \omega_-^2 + (\I \omega_{av}-\nu) \omega_-, \quad (\omega_+)_t=-\I \omega_+^2 + (-\I \omega_{av}-\nu) \omega_+, \quad (\omega_{av})_t= -\nu \omega_{av},
\end{equation}

{\underline{\em One pair of c.c. poles (N=1).} \cblue{We consider (\ref{N_poles}) with $N=1$:
\begin{equation}\label{omegasigma0_periodic_lower}
\omega_-(x,t)=\omega_{-1}(t)\left[\frac{1}{\tan(\frac{x}{2})-\I v_c(t)}- \frac{1}{-\I-\I v_c(t)}\right],  ~~\omega_+(x,t)=\overline{\omega_-(\overline{x},t)}
\end{equation}
}
  Substitute the first equation from \e{omegasigma0_periodic_lower} into the first equation in \e{mainEquationa0sigma0_periodic_lower} and equate like order poles to get the following equations for the pole amplitude and location in the upper half-plane:
\begin{equation}\label{a0sigma0_periodic_eqns} 
\frac{d\omega_{-1}(t)}{dt}=(\I \omega_{av}(t)-\nu)\omega_{-1}(t) + \frac{2\omega_{-1}^2(t)}{1+v_c(t)},  \qquad  \frac{dv_c(t)}{dt} =\omega_{-1}(t), \quad \frac{d \omega_{av}(t)}{dt}=-\nu \omega_{av}(t).
\end{equation}
Equations (\ref{a0sigma0_periodic_eqns}) have a solution
\begin{eqnarray}\label{a0sigma0_periodic_solution_c}
\omega_{-1}(t)&=&\frac{\omega_{-1}(0)e^{-\nu t} \zeta(t)}{\left(1-\frac{\omega_{-1}(0)}{\I \omega_{av}(0)(1+v_c(0))}(\zeta(t)-1) \right)^2}, \
v_c(t)=\frac{v_c(0)+1}{\left(1-\frac{\omega_{-1}(0)}{\I \omega_{av}(0)(1+v_c(0))}(\zeta(t)-1) \right)}  -1, \nonumber \\
\omega_{av}(t)&=&\omega_{av}(0)e^{-\nu t}, \quad \text{where} \quad \zeta(t)=e^{\I \frac{\omega_{av}(0)}{\nu}(1-e^{-\nu t}) }.
\end{eqnarray}
This generalizes the solution presented in \cite{AmbroseLushnikovSiegelSilantyev} to include nonzero mean. It can be verified that
in the limit $\omega_{av}(0) \rightarrow 0$ this solution reduces to
\begin{equation}\label{a0sigma0_periodic_solution}
\omega_{-1}(t)=\frac{\omega_{-1}(0)e^{-\nu t}}{\left(1-\frac{\omega_{-1}(0)}{\nu(1+v_c(0))}(1-e^{-\nu t}) \right)^2}, \
v_c(t)=\frac{v_c(0)+1}{\left(1-\frac{\omega_{-1}(0)}{\nu(1+v_c(0))}(1-e^{-\nu t}) \right)}  -1 ,
\end{equation}
which is the same as the zero mean solution (eq. (82)) from \cite{AmbroseLushnikovSiegelSilantyev}.

{\underline{\em Inviscid problem ($\nu=0$).} Equations \e{a0sigma0_periodic_solution} for $\nu=0$ reduce to:
\begin{equation}\label{a0sigma0_periodic_solution_nu0}
\omega_{-1}(t)=\frac{\omega_{-1}(0)}{(1-\frac{\omega_{-1}(0)}{1+v_c(0)}t)^2},  \qquad v_c(t)=\frac{v_c(0)+\frac{\omega_{-1}(0)}{1+v_c(0)}t}{1-\frac{\omega_{-1}(0)}{1+v_c(0)}t}.
\end{equation}
Then we always have a collapsing solution at $t=t_c$ (even for arbitrarily small data) with collapse location $x=x_c=2 \arctan(\I v_c(t_c))$.
In case of collapse with $Re[v_c(t_c)]=0$, the singularity approaches the real line in $x$-space at $x_c$ and reaches it  at the time $t_c$ given by
\begin{equation} \label{a0sigma0_tc_periodic}
x_c=-2 \arctan(Im[v_c(t_c)]),~
t_c=\frac{Y + \sqrt{4|\omega_{-1}(0)|^2 Re[v_c(0)] (| v_c(0) | ^2 + 1 + 2 Re[v_c(0)]) + Y^2}}{2|\omega_{-1}(0)|^2},
\end{equation}
where $$Y=Re[\omega_{-1}(0)](1 + Im[v_c(0)]^2 - Re[v_c(0)]^2) - 2 Im[\omega_{-1}(0)] Im[v_c(0)] Re[v_c(0)].$$
In the collapse case where $Re[v_c(t_c)]=\infty$,  the singularity approaches the real line at $x_c$ and time $t_c$ given by
\begin{equation} \label{a0sigma0_tc_periodic_inf}
x_c=\pm \pi,~~~~~~~~~~t_c=\frac{1+ Re[v_c(0)]}{ Re[\omega_{-1}(0)]},
\end{equation}
when the condition $ Im[\omega_{-1}(0)]Re[1+v_c(0)]=Re[\omega_{-1}(0)]Im[v_c(0)]$ is satisfied.

For purely real $v_c(0)>0$ and $\omega_{-1}(0)$ equations \e{a0sigma0_tc_periodic} and  \e{a0sigma0_tc_periodic_inf} reduce to
\begin{align} \nonumber
t_c&=-\frac{v_c(0)(1+v_c(0))}{\omega_{-1}(0)}, &&x_c=0 (v_c \rightarrow 0)  &\mbox{for}\quad \omega_{-1}(0)<0,\\
t_c&=\frac{1+v_c(0)}{\omega_{-1}(0)}, &&x_c=\pm \pi (v_c \rightarrow \infty)  &\mbox{for} \quad \omega_{-1}(0)>0.\nonumber
\end{align}

The solution \e{omegasigma0_periodic_lower}, \e{a0sigma0_periodic_solution_nu0} is a pole dynamics analog of the original solution in \cite{ConstantinLaxMajda} as well as a spatially periodic version of  equation (32) in \cite{Lushnikov_Silantyev_Siegel}, which describes  self-similar blowup in the inviscid problem on the real line.
Finite-time collapse in \e{omegasigma0_periodic_lower}, \e{a0sigma0_periodic_solution_nu0} is of the general self-similar form (\ref{self-similar1}) with $\alpha=\beta=1.$ However, when $\omega_{av}(0) \ne 0$  the solution   (\ref{a0sigma0_periodic_solution})  can have different similarity exponents and exhibit new behavior, even for $\nu=0$. This includes solutions which collapse then smooth out periodically in time, as described  in   Figure \ref{fig:pole_trajectories} and below.

{\underline{Dissipative problem ($\nu>0$).}  When $\nu>0$ and $\omega_{av}(0) = 0$ finite-time singularity formation in the
 solution \e{omegasigma0_periodic_lower},  \e{a0sigma0_periodic_solution} occurs in two ranges of initial data (outside of these ranges we have global-in-time existence). An   analysis of this  singularity formation  appears   in  Section 5.4 of  \cite{AmbroseLushnikovSiegelSilantyev}, which we summarize in the next paragraph.  For simplicity,  we  assume that  $\omega_{-1}(0)$ and $v_c(0)$ are purely real.



The first range of initial data which leads to finite-time collapse is  $\omega_{-1}(0) <- \nu v_c(0)(1+v_c(0))$, for which   the poles reach the real axis at $v_c(t_c)=0$, $x_c=0$, where
\begin{equation}\nonumber
t_c=\frac{1}{\nu}\ln \left(\frac{\omega_{-1}(0)}{\omega_{-1}(0)+\nu v_c(0)(1+v_c(0))}\right).
\end{equation}
The initial data which leads to this type of blowup satisfies
\begin{align} \label{a0sigma0_periodic_solution_norms_real_c1}
& \| \omega(\cdot,0) \|_{L^2} > 2\nu \sqrt{\pi v_c(0)},
 \nonumber
 \\
 &\| \omega(\cdot,0) \|_{B_0} > 2\nu, \mbox{ if } v_c(0)<1, &&
 \| \omega(\cdot,0) \|_{B_0} > 2\nu v_c(0), \mbox{ if } v_c(0)\geq1. \nonumber
\end{align}
We therefore see that collapse can be made to occur for arbitrarily small  $\| \omega(\cdot,0) \|_{L^2}$ by choosing small enough $v_c(0)$,  but
$\| \omega(\cdot,0) \|_{B_0}$ must be greater than $2 \nu$ for blow up.

The second range of initial data which leads to finite-time collapse is  $\omega_{-1}(0) > \nu (1+v_c(0))$ for which   the poles reach the real axis at $v_c(t_c)=\infty$, $x_c=\pm\pi$ at the time
\begin{equation}\nonumber 
t_c=\frac{1}{\nu}\ln \left(\frac{\omega_{-1}(0)}{\omega_{-1}(0)-\nu(1+v_c(0))}\right).
\end{equation}
The initial data which leads to this type of blowup satisfies
\begin{align}
&\| \omega(\cdot,0) \|_{L^2} > 2\nu \sqrt{ \frac{\pi}{v_c(0)}},  \nonumber
 \\
 &\| \omega(\cdot,0) \|_{B_0} > 2\frac{\nu}{v_c(0)},\mbox{ if } v_c(0)<1, &&\| \omega(\cdot,0) \|_{B_0} > 2\nu ,\mbox{ if } v_c(0)\geq1, \nonumber
\end{align}
We again see that collapse can be made to occur for arbitrarily small  $\| \omega(\cdot,0) \|_{L^2}$ by choosing large enough $v_c(0)$,
 whereas
$\| \omega(\cdot,0) \|_{B_0}$  must be sufficiently large for blow up.

In both cases, the collapse is self-similar and the solution \e{omegasigma0_periodic_lower}
belongs to the general self-similar form (\ref{self-similar1}). As is shown below, different values of the similarity exponents $\alpha$ and $\beta$ are possible, depending on the data.  Generically,  however, $\alpha=\beta=1$, which follows from  $\omega_{-1}(t_c)\neq0$   and $v_c\sim (t_c-t)$ in the  collapse case when $v_c \rightarrow 0$, and  $\omega_{-1} \sim (t_c-t)^{-2}$ and $v_c\sim (t_c-t)^{-1}$ in the   case when when $v_c \rightarrow \infty$.

An interesting new observation is that the pole trajectories of  $N=1$ solutions for $a=0$, $\sigma=0$ lie on circles or lines in $\tan(\frac{x}{2})$-space.
This follows from a nontrivial calculation which shows that $v_c(t)$ from \e{a0sigma0_periodic_solution} can be rewritten as
\begin{align}\label{a0sigma0_periodic_solution_circle}
\begin{split}
v_c(t) &=
\I \frac{\Omega + v_c(0)\bar{\Omega}}{2 Im[\Omega]} +  \left| \frac{(v_c(0) + 1) \Omega}{2 Im[\Omega]} \right| e^{\I(\arg[v_c(0) + 1] + \arg[\Omega] - 2\arg[\Omega \theta(t) - 1] -sgn(Im[\Omega]) \pi/2)},  \\
 \mbox{where} \quad\Omega &=\frac{\omega_{-1}(0)}{1+v_c(0)}, \quad \theta(t)=\frac{1 - e^{-\nu t}}{\nu} (=  t \mbox{~when~} \nu=0).
 \end{split}
\end{align}
Only the phase is time-dependent so that when $Im[\Omega] \ne 0$ (\ref{a0sigma0_periodic_solution_circle}) describes a circle. In the case $Im[\Omega]=0$ a separate calculation starting from \e{a0sigma0_periodic_solution} shows that $v_c(t)$ lies on a line.
 The more general solution \e{a0sigma0_periodic_solution_c} for $\omega_{av}(0)\neq 0, \nu \geq 0 $ also has pole trajectories lying on circles and lines in $\tan(\frac{x}{2})$-space.

Figure \ref{fig:pole_trajectories} shows example  pole trajectories  in $\tan(\frac{x}{2})$-space that  result in collapsing solutions with different similarity exponents. In Figure \ref{fig:pole_trajectories}(a) the pole crosses the real-$\tan(\frac{x}{2})$ line  (i.e.,  $Re[v_c]=0$)  with scaling $Re[v_c] \sim t_c-t$ for $t$ near $t_c$, so that  the similarity exponents are $\alpha=\beta=1$. The solution on $x \in [-\pi,\pi]$ is ill-defined after the pole passes through the real $\tan(\frac{x}{2})$-line, i.e.,   for $Re[v_c]<0$, shown in red.
In Figure \ref{fig:pole_trajectories}(b)  the pole  reaches the real $\tan(\frac{x}{2})$-line
when $t \rightarrow \infty$, corresponding to infinite time collapse.
In Figures \ref{fig:pole_trajectories}(c) and (d) the pole touches the real line with
$\frac{d Re[v_c(t)]}{dt} = 0$.
The solution  in Figure \ref{fig:pole_trajectories}(c) collapses at $t=t_c$ but immediately regularizes and  is smooth again for $t>t_c$.  This `collapse followed by smoothing' occurs multiple times  in  Figure \ref{fig:pole_trajectories}(d).   When $\nu=0$ and $\omega_{av} \ne 0$, one can find a circular trajectory like that in Figure \ref{fig:pole_trajectories}(d) that is periodically traversed in time, i.e.,  a periodically collapsing solution.
\cblue{The solution in Figure \ref{fig:pole_trajectories}(c) and each of the periodically collapsing solutions in    Figure \ref{fig:pole_trajectories}(d) are examples of so-called  two-scale self-similar blowup \cite{HuangTwoScale2024} of the form (\ref{self-similar1}), specifically,
\begin{equation}\label{multiscaleblowup}
\omega \sim \frac{1}{\tau^\beta} f \left( \frac{x -x_c-r_c  \cdot \tau^\delta}{\tau^\alpha} \right),~\mbox{where}~r_c=  \frac{2 Im [\dot{v}_c(t_c)]}{1+Im[v_c(t_c)]^2},
\end{equation}
with $\alpha=\beta=2$, $\delta=1$, and the constant $x_c$ given by (\ref{a0sigma0_tc_periodic}). Strictly speaking, an infinite  family of  two-scale self-similar  collapsing solutions can be  generated  from a single-scale self-similar  collapsing solution by a change of reference frame  $x'=x-a\int^t r(s)ds, ~ t'=t$.  This shifts the generally time-dependent (spatial) average  velocity $u_{av}(t)$ by $r(t)$. Such degeneracy can be avoided by considering an appropriate choice of $u_{av}(t)$.}

Figure \ref{fig:pole_trajectories2} shows an example of a degenerate pole trajectory
that results in a collapsing solution with $Re[v_c(t_c)]=\infty$ and $x_c(t_c) = \pm \pi$. Here  $v_c \sim 1/(t-t_c)$ as $t \rightarrow t_c$, and the similarity exponents are $\alpha=\beta=1$.
 The solution for $Im[x_c]<0$ (shown in red) is ill-defined on the real-$x$ line.
(We note that this phenomenon of singularities which come and go in time is qualitatively similar to the dispersive blowup phenomenon studied in \cite{bonaDispersive1},
\cite{bonaDispersive2}; in these works, it is shown that while dispersive equations propagate Sobolev regularity, solutions can momentarily lose their highest $C^{k}$ regularity.
Thus, for dispersive equations, solutions are able to continue through such singularities and become regular again.)


\begin{figure}[h!]
\centering
\includegraphics[width=0.475\textwidth]{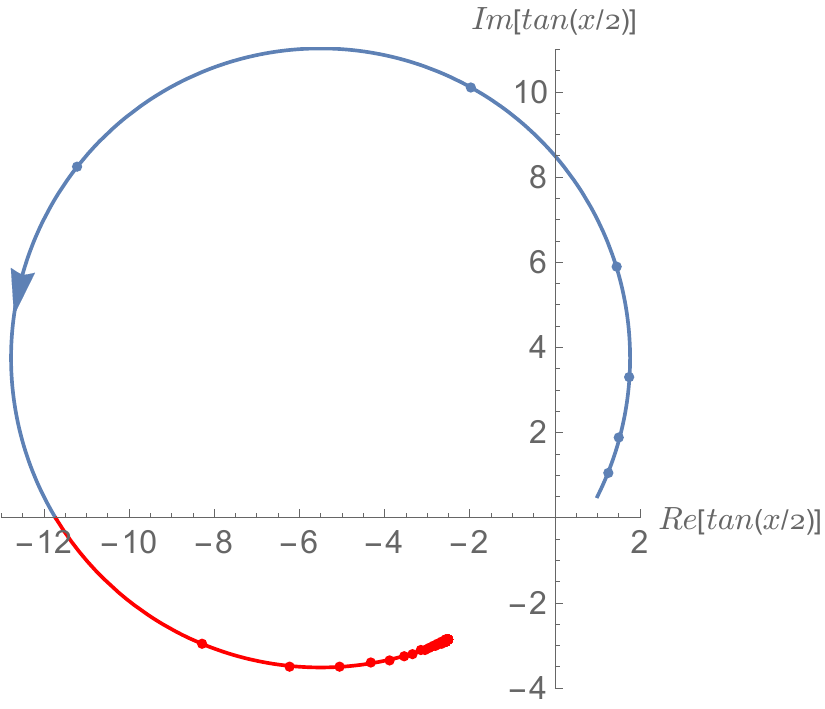}
\put(-230,200){\Large{(a)}}
\put(0,200){\Large{(b)}}
\includegraphics[width=0.475\textwidth]{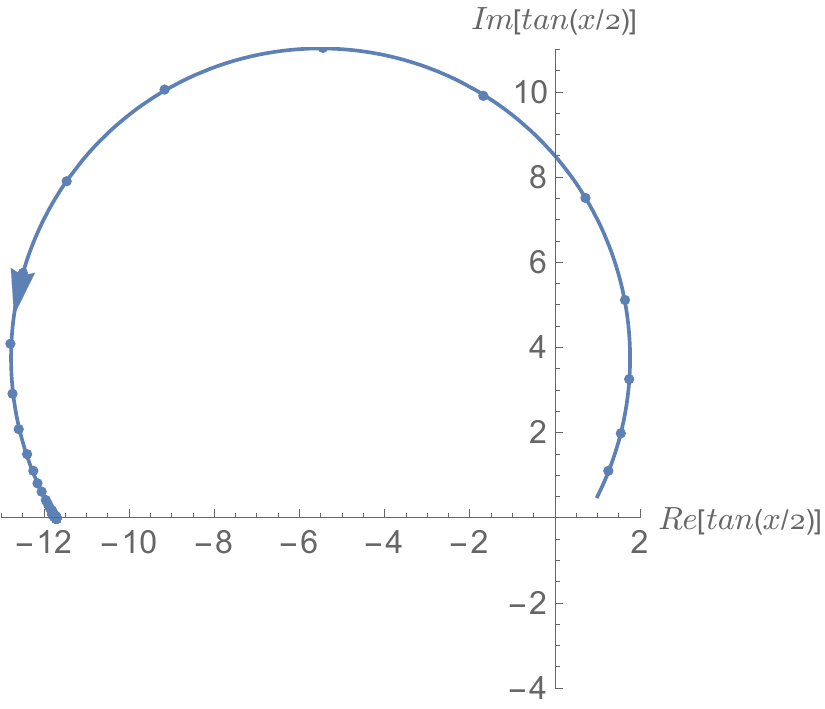}
\includegraphics[width=0.475\textwidth]{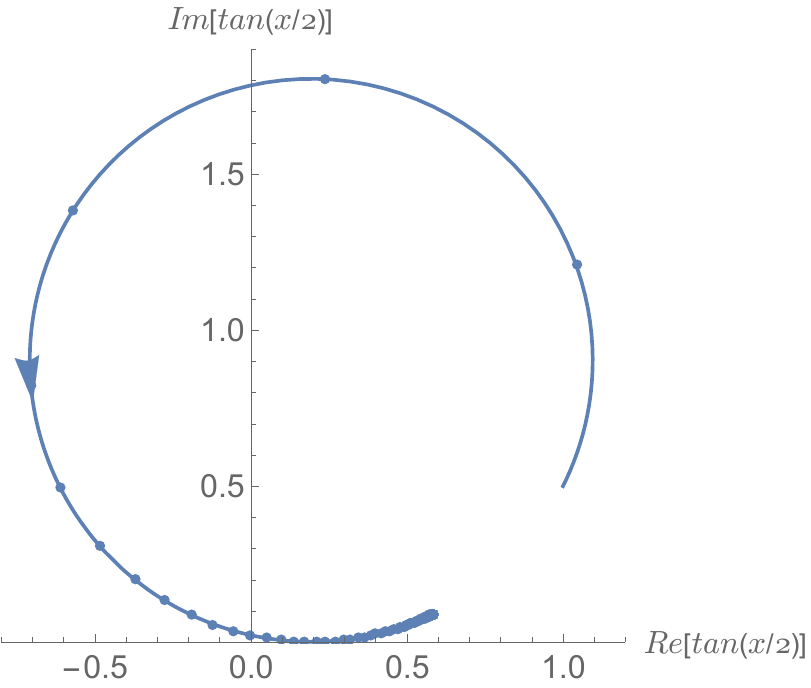}
\includegraphics[width=0.475\textwidth]{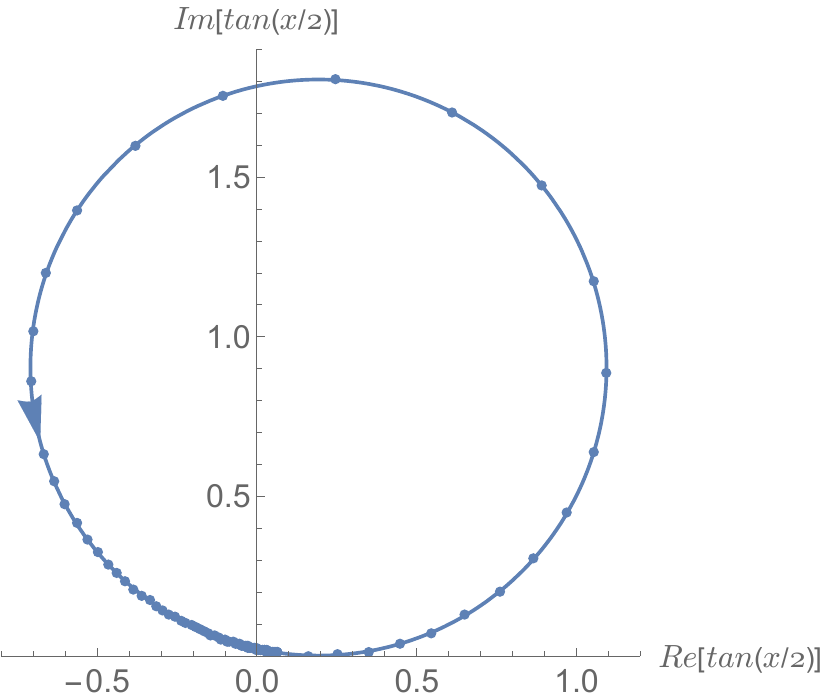}
\put(-455,150){\Large{(c)}}
\put(-225,150){\Large{(d)}}
\caption{Pole trajectories for $N=1,a=0,\sigma=0$ in $\tan(\frac{x}{2})$-space.
 Initial data is $v_c(0)=0.5 - \I$ and  $\omega_{-1}(0)  = 0.5 - 0.25 \I$, with different $\nu$ and $\omega_{av}(0)$.
(a) $\nu = 0.2, \omega_{av}(0)=0$: The pole crosses the real-$\tan(\frac{x}{2})$ line with $\frac{d Re[v_c(t_c)]}{dt} \neq 0$.
(b) $\nu \approx 0.2865, \omega_{av}(0)=0$:  The pole  tends to the real line as $t \rightarrow \infty$.
(c) $\nu =0.1, \omega_{av}(0) \approx 0.543$: The pole touches the real line  with $\frac{d Re[v_c(t)]}{dt} = 0$.
(d) $\nu =0.01, \omega_{av}(0)\approx 0.543$: \cblue{The pole traverses the circle touching the real line multiple times before it settles at $\tan(\frac{x}{2}) \approx 0.0806 + 0.0086\I$}.
Grid points shown on the trajectories are equispaced in $t$ and cluster near the final location of the trajectories.}
 \label{fig:pole_trajectories}
\end{figure}

\begin{figure}[h!]
\centering
\includegraphics[width=0.475\textwidth]{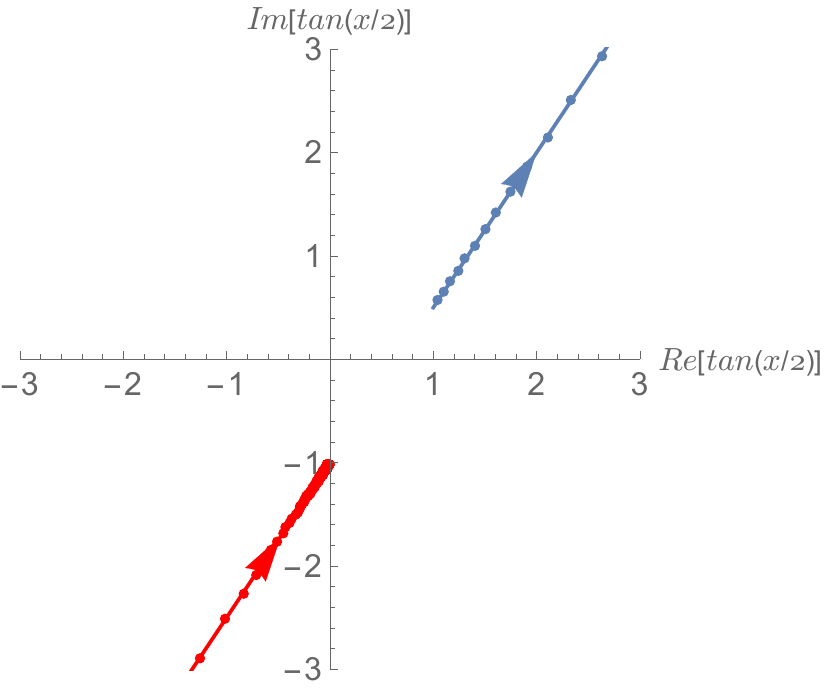}
\includegraphics[width=0.475\textwidth]{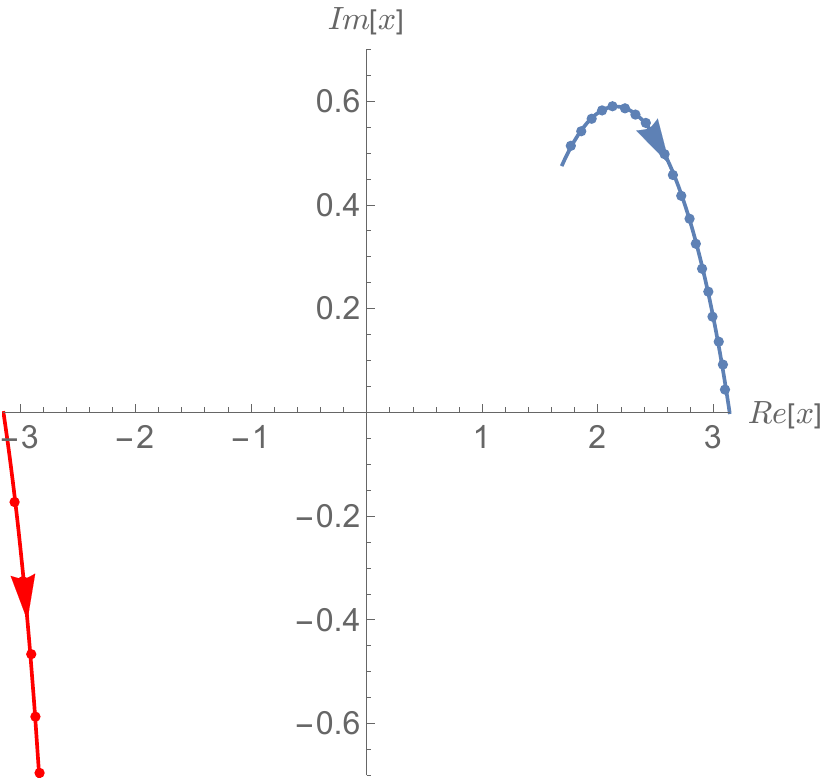}
\put(-450,200){\Large{(a)}}
\put(-220,200){\Large{(b)}}
\caption{Degenerate pole trajectory for $N=1,a=0,\sigma=0$.
Initial data is  $v_c(0)=0.5 - \I$,  $\omega_{-1}(0)  = 1.5 - \I$, $\omega_{av}(0)=0, \nu = 0$.
(a) The trajectory in $\tan(\frac{x}{2})$-space. The pole moves to $Re[v_c(t)] \rightarrow \infty$ as $t \rightarrow t_c$ (blue line)  and continues from  $Re[v_c(t)] = -\infty$ to its final point $v_c(\infty)=-1$ as $t \rightarrow \infty$ (red line).
(b) The same trajectory in $x$-space.  The pole  trajectory $x_c(t)$ crosses the real line at $x=\pm \pi$ with $\frac{d Im[x_c(t_c)]}{dt} \neq 0$.
Grid points shown are as in (a). }
 \label{fig:pole_trajectories2}
\end{figure}

{\underline{\em $N>1$ pairs of c.c. poles.} We now look for a solution of the form (\ref{N_poles}) with $N>1$.
Plug this into the first equation in (\ref{mainEquationa0sigma0_periodic_lower}),  separate products of poles into sums of poles using a partial fraction expansion, and equate like power poles to  obtain the following evolution equations for the (generally complex) pole amplitudes and positions:
\begin{align} \label{a0sigma0_periodic_eqns_N}
\frac{d \omega_{-1,k}}{dt} &= \frac{2}{1 + v_{c,k}} \omega_{-1,k}^2 +2 \omega_{-1,k} \sum_{\stackrel{l=1}{l \neq k}}^{N} \omega_{-1,l} \left( \frac{1}{v_{c,k}-v_{c,l}} + \frac{1}{1+v_{c,l}} \right) + (\I \omega_{av} - \nu) \omega_{-1,k}, \\
\frac{d v_{c,k}}{dt} &=\omega_{-1,k}, \nonumber
\end{align}
where $k=1 \ldots N$.
The term  $\omega_{av}$ still satisfies  (\ref{mainEquationa0sigma0_periodic_lower}), (\ref{a0sigma0_periodic_eqns}). Note that this reduces to (\ref{a0sigma0_periodic_eqns}) when $N=1$, in which case  the sum on the right-hand side of \e{a0sigma0_periodic_eqns_N} is absent.

These equations are difficult to solve analytically when $N \geq 2$, although it is straightforward to obtain numerical solutions.
 Analytical solutions for two pairs of poles in the real line geometry are obtained in  \cite{AmbroseLushnikovSiegelSilantyev}, and can exhibit collapse with different similarity exponents depending on the data.   A numerical example of non-generic collapse for $N=2$ in the case of $a=0, ~\sigma=1$ is given \cblue{at the end of Section \ref{sec:a0sigma1}}.



\subsection{A global existence theorem for ${a=0},$ ${\sigma=0,}$ ${\nu>0}$} \label{sec:a=0_well-posedness}

The results of Section \ref{sec:a0sigma0} include that the blowup solutions found there do not have arbitrarily small Wiener norm.
We show here that this is because solutions with small Wiener norm (in this case $a=0,$ $\sigma=0,$ and $\nu>0$) exist for all time.

In the present section, since $\sigma=0,$ the mean of $\tilde{\omega}$ is not conserved; therefore we will work here directly with
$\tilde{\omega}$ rather than $\omega.$
In the case $a=0,$ $\sigma=0,$ equation \eqref{CLM} becomes
\begin{equation}\nonumber 
\tilde{\omega}_{t}=\tilde{\omega} H(\tilde{\omega})-\nu\tilde{\omega}.
\end{equation}
We write this, with initial data $\tilde{\omega}_{0},$ in Duhamel form as
\begin{equation}\label{mildForm}
\tilde{\omega}(\cdot,t)=e^{-\nu t}\tilde{\omega}_{0}+\int_{0}^{t}e^{-\nu(t-s)}\tilde{\omega}(\cdot,s)H(\tilde{\omega})(\cdot,s)\ \mathrm{d}s.
\end{equation}
We let $B_{0}$ be the Wiener algebra; on $\mathbb{R},$ this is the space of functions with Fourier transform in $L^{1},$ and on
$\mathbb{S},$ this is the space of functions with Fourier series in $\ell^{1}.$  We assume $\tilde{\omega}_{0}\in B_{0}.$

We need space-time versions of $B_{0}.$  We denote these by $\mathcal{B}_{0}(\mathbb{R})$ and $\mathcal{B}_{0}(\mathbb{S}),$
in the real-line and periodic cases, respectively.  The norms are defined as
\begin{equation}\nonumber
\|f\|_{\mathcal{B}_{0}(\mathbb{R})}=\int_{\mathbb{R}}\sup_{t\in[0,\infty)}|\hat{f}(\xi,t)|\ \mathrm{d}\xi,
\end{equation}
\begin{equation}\nonumber
\|f\|_{\mathcal{B}_{0}(\mathbb{S})}=\sum_{k\in\mathbb{Z}}\sup_{t\in[0,\infty)}|\hat{f}(k,t)|.
\end{equation}
Note that $\mathcal{B}_{0}$ is an algebra,
\begin{equation}\nonumber
\|fg\|_{\mathcal{B}_{0}}\leq \|f\|_{\mathcal{B}_{0}}\|g\|_{\mathcal{B}_{0}},
\end{equation}
where $\mathcal{B}_{0}$ denotes either $\mathcal{B}_{0}(\mathbb{R})$ or $\mathcal{B}_{0}(\mathbb{T}).$  This algebra property is inherited from the
Wiener algebra, $B_{0}.$

In what follows, all statements are true in both $\mathcal{B}_{0}(\mathbb{R})$ and $\mathcal{B}_{0}(\mathbb{S}),$ but we will
focus the proof on the $\mathcal{B}_{0}(\mathbb{R})$ case.

\begin{theorem} Let $a=0,$ $\sigma=0,$ and $\nu>0.$  Let $\tilde{\omega}_{0}\in B_{0}(\mathbb{R})$ or $\tilde{\omega}_{0}\in B_{0}(\mathbb{S}).$
Assume $\|\tilde{\omega}_{0}\|_{B_{0}}<\frac{\nu}{4}.$  Then there exists a global mild solution $\tilde{\omega}\in\mathcal{B}_{0}$ of \eqref{CLM}.
That is, this $\tilde{\omega}$ satisfies \eqref{mildForm} for all $t\geq0.$
\end{theorem}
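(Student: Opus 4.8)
The plan is to run a standard contraction-mapping (Picard) argument on the Duhamel formulation \eqref{mildForm} in the space-time Wiener algebra $\mathcal{B}_{0}$, exploiting the exponential decay factor $e^{-\nu t}$ coming from the dissipation term to close the estimates globally in time. Define the map $\Phi$ on $\mathcal{B}_{0}$ by
\begin{equation}\nonumber
\Phi(f)(\cdot,t)=e^{-\nu t}\tilde{\omega}_{0}+\int_{0}^{t}e^{-\nu(t-s)}f(\cdot,s)\,H(f)(\cdot,s)\ \mathrm{d}s,
\end{equation}
so that a fixed point of $\Phi$ is exactly a global mild solution. First I would record the two elementary facts needed: (i) the linear term is controlled, $\|e^{-\nu t}\tilde{\omega}_{0}\|_{\mathcal{B}_{0}}=\|\tilde{\omega}_{0}\|_{B_{0}}$, since $\sup_{t\ge 0}|e^{-\nu t}\hat{\tilde{\omega}}_{0}(\xi)|=|\hat{\tilde{\omega}}_{0}(\xi)|$; and (ii) the Hilbert transform is an isometry on $B_{0}$ (its symbol is $-\I\,\mathrm{sgn}(k)$, of modulus $1$), hence also on $\mathcal{B}_{0}$, so $\|H(f)\|_{\mathcal{B}_{0}}\le\|f\|_{\mathcal{B}_{0}}$.

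The key step is the bound on the Duhamel (nonlinear) term. Writing $g(s)=f(\cdot,s)H(f)(\cdot,s)$, for each fixed frequency $\xi$ we have $|\widehat{\Phi(f)}(\xi,t)-e^{-\nu t}\hat{\tilde{\omega}}_0(\xi)|\le\int_0^t e^{-\nu(t-s)}|\hat g(\xi,s)|\,\mathrm ds\le\big(\sup_{s}|\hat g(\xi,s)|\big)\int_0^t e^{-\nu(t-s)}\,\mathrm ds\le\frac{1}{\nu}\sup_s|\hat g(\xi,s)|$. Taking $\sup_{t}$ and then summing (or integrating) in $\xi$ gives, by the algebra property of $\mathcal{B}_{0}$ and fact (ii),
\begin{equation}\nonumber
\Big\|\int_0^t e^{-\nu(t-s)}f(\cdot,s)H(f)(\cdot,s)\,\mathrm ds\Big\|_{\mathcal{B}_0}\le\frac{1}{\nu}\|fH(f)\|_{\mathcal{B}_0}\le\frac{1}{\nu}\|f\|_{\mathcal{B}_0}^2.
\end{equation}
Hence $\|\Phi(f)\|_{\mathcal{B}_0}\le\|\tilde{\omega}_0\|_{B_0}+\frac{1}{\nu}\|f\|_{\mathcal{B}_0}^2$. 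A parallel computation for the difference, using the bilinear identity $f_1 H(f_1)-f_2 H(f_2)=(f_1-f_2)H(f_1)+f_2 H(f_1-f_2)$, yields the Lipschitz estimate $\|\Phi(f_1)-\Phi(f_2)\|_{\mathcal{B}_0}\le\frac{1}{\nu}(\|f_1\|_{\mathcal{B}_0}+\|f_2\|_{\mathcal{B}_0})\|f_1-f_2\|_{\mathcal{B}_0}$.

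To finish, I would choose the closed ball $\mathcal{K}=\{f\in\mathcal{B}_0:\|f\|_{\mathcal{B}_0}\le R\}$ with $R=2\|\tilde{\omega}_0\|_{B_0}$. Under the hypothesis $\|\tilde{\omega}_0\|_{B_0}<\nu/4$ we have $R<\nu/2$, so that $\|\tilde{\omega}_0\|_{B_0}+R^2/\nu\le R/2+R/2=R$, i.e.\ $\Phi$ maps $\mathcal{K}$ into itself, and the Lipschitz constant $2R/\nu<1$, so $\Phi$ is a contraction on $\mathcal{K}$. The Banach fixed point theorem then supplies a unique fixed point $\tilde{\omega}\in\mathcal{K}\subset\mathcal{B}_0$, which is the desired global mild solution satisfying \eqref{mildForm} for all $t\ge 0$. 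The only mildly delicate point is the routine verification that $\Phi$ genuinely maps $\mathcal{B}_0$ into $\mathcal{B}_0$ (measurability/continuity in $t$ of $\widehat{\Phi(f)}(\xi,\cdot)$ and the legitimacy of pulling $\sup_t$ through the time integral via the bound $e^{-\nu(t-s)}\le 1$); there is no genuine obstacle, since the $e^{-\nu t}$ factor makes all time integrals converge uniformly and the argument is frequency-by-frequency. I would remark at the end that the identical estimates hold verbatim in $\mathcal{B}_0(\mathbb{S})$ with sums replacing integrals, which is why it suffices to present the $\mathbb{R}$ case.
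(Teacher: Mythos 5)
Your proposal is correct and follows essentially the same route as the paper: a global-in-time contraction mapping argument on the Duhamel formulation in the space-time Wiener algebra, with the key estimate $\|\int_0^t e^{-\nu(t-s)}h(\cdot,s)\,\mathrm{d}s\|_{\mathcal{B}_0}\leq\|h\|_{\mathcal{B}_0}/\nu$ and the algebra property closing the quadratic term. The only cosmetic difference is that you take the ball centered at the origin of radius $2\|\tilde{\omega}_0\|_{B_0}$ while the paper centers its ball at $e^{-\nu t}\tilde{\omega}_0$ with $r_0=r_1<\nu/4$; both choices yield the same smallness threshold and the same conclusion.
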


\begin{proof}  As we have said, we will only carry out the case on the real line, but proof in the case on $\mathbb{S}$ is identical, just replacing integrals with sums.

We show that the semigroup maps $B_{0}$ to $\mathcal{B}_{0}.$  We let $\tilde{\omega}_{0}\in B_{0}(\mathbb{R}),$ and we compute the norm of $e^{-\nu t}\tilde{\omega}_{0}:$
\begin{equation}\nonumber
\|e^{-\nu t}\tilde{\omega}_{0}\|_{\mathcal{B}_{0}(\mathbb{R})} =\int_{\mathbb{R}}\sup_{t\in[0,\infty)}e^{-\nu t}\left|\widehat{\tilde{\omega}_{0}}(\xi)\right|\ \mathrm{d}\xi
=\int_{\mathbb{R}}\left|\widehat{\tilde{\omega}_{0}}(\xi)\right|\ \mathrm{d}\xi=\|\tilde{\omega}_{0}\|_{B_{0}(\mathbb{R})}.
\end{equation}

We define the mapping $I^{+}$ to be
\begin{equation}\nonumber
(I^{+}h)(\cdot,t)=\int_{0}^{t}e^{-\nu(t-s)}h(\cdot,s)\ \mathrm{d}s.
\end{equation}
We now show that $I^{+}$ is a bounded linear operator from $\mathcal{B}_{0}(\mathbb{R})$ to $\mathcal{B}_{0}(\mathbb{R}).$
We begin by letting $h\in\mathcal{B}_{0}(\mathbb{R}),$ and we express the norm of $I^{+}h:$
\begin{equation}\nonumber
\|I^{+}h\|_{\mathcal{B}_{0}(\mathbb{R})} = \int_{\mathbb{R}}\sup_{t\in[0,\infty)}\left|
\int_{0}^{t}e^{-\nu(t-s)}\hat{h}(\xi,s)\ \mathrm{d}s\right|\ \mathrm{d}\xi.
\end{equation}
We use the triangle inequality and introduce another supremum to bound this as
\begin{equation}\nonumber
\|I^{+}h\|_{\mathcal{B}_{0}(\mathbb{R})}\leq\int_{\mathbb{R}}\sup_{t\in[0,\infty)}\int_{0}^{t}e^{-\nu(t-s)}\left[\sup_{\tau\in[0,\infty)}|\hat{h}(\xi,\tau)|\right]
\ \mathrm{d}s\ \mathrm{d}\xi.
\end{equation}
Rearranging, this becomes
\begin{equation}\label{integralToBeCompletedForI+Bound}
\|I^{+}h\|_{\mathcal{B}_{0}(\mathbb{R})}\leq\left(\int_{\mathbb{R}}\sup_{\tau\in[0,\infty)}|\hat{h}(\xi,\tau)|\ \mathrm{d}\xi\right)
\left(\sup_{t\in[0,\infty)}\int_{0}^{t}e^{-\nu(t-s)}\ \mathrm{d}s\right).
\end{equation}
The first factor on the right-hand side is just $\|h\|_{\mathcal{B}_{0}(\mathbb{R})},$ and we can compute that the second factor equals $1/\nu:$
\begin{equation}\nonumber
\|I^{+}h\|_{\mathcal{B}_{0}(\mathbb{R})}\leq\frac{\|h\|_{\mathcal{B}_{0}(\mathbb{R})}}{\nu}.
\end{equation}

We define the mapping $\mathcal{T}$ to be
\begin{equation}\nonumber
\mathcal{T}\tilde{\omega}=e^{-\nu t}\tilde{\omega}_{0}+I^{+}(\tilde{\omega} H(\tilde{\omega})).
\end{equation}
Then a mild solution of the $a=0,$ $\sigma=0$ problem is a fixed point of $\mathcal{T}.$
We define $r_{0}>0$ to be $r_{0}=\|e^{-\nu t}\tilde{\omega}_{0}\|_{\mathcal{B}_{0}(\mathbb{R})},$ and
we let $X$ be the closed ball in $\mathcal{B}_{0}(\mathbb{R})$
centered at $e^{-\nu t}\tilde{\omega}_{0},$ with radius $r_{1}.$  We will show that for appropriate choices of $r_{0}$ and $r_{1},$
that $\mathcal{T}$ is a contraction on $X.$  Note that for any $f\in X,$ we have $\|f\|_{\mathcal{B}_{0}(\mathbb{R})}\leq r_{0}+r_{1}.$

We first must show that $\mathcal{T}$ maps $X$ to $X.$  We let $f\in X$ be given.  We compute as follows:
\begin{equation}\nonumber
\|\mathcal{T}f-e^{-\nu t}\tilde{\omega}_{0}\|_{\mathcal{B}_{0}(\mathbb{R})}=\|I^{+}(fH(f))\|_{\mathcal{B}_{0}(\mathbb{R})}\leq
\frac{\|f\|_{\mathcal{B}_{0}(\mathbb{R})}^{2}}{\nu}\leq\frac{(r_{0}+r_{1})^{2}}{\nu}.
\end{equation}
If we have
\begin{equation}\label{XtoX}
\frac{(r_{0}+r_{1})^{2}}{\nu}\leq r_{1},
\end{equation}
then, indeed, $\mathcal{T}$ maps $X$ to $X.$

We now consider the contraction property.  Let $f$ and $g$ both be in the ball $X;$ then we may estimate $\mathcal{T}f-\mathcal{T}g$ as
\begin{equation}\nonumber
\|\mathcal{T}f-\mathcal{T}g\|_{\mathcal{B}_{0}(\mathbb{R})}=\|I^{+}(fH(f)-gH(g))\|_{\mathcal{B}_{0}(\mathbb{R})}\leq\frac{\|fH(f)-gH(g)\|_{\mathcal{B}_{0}(\mathbb{R})}}{\nu}.
\end{equation}
By adding and subtracting, we find the following bound:
\begin{equation}\nonumber
\|\mathcal{T}f-\mathcal{T}g\|_{\mathcal{B}_{0}(\mathbb{R})}\leq\frac{2(r_{0}+r_{1})}{\nu}\|f-g\|_{\mathcal{B}_{0}(\mathbb{R})}.
\end{equation}
Clearly, then, this is contracting as long as
\begin{equation}\label{contractingRequirement}
\frac{2(r_{0}+r_{1})}{\nu}<1.
\end{equation}
Taking $r_{0}=r_{1}<\frac{\nu}{4},$ then both of \eqref{XtoX} and \eqref{contractingRequirement} are satisfied.  This completes the proof.
\end{proof}

\subsection{A global existence theorem for ${a=0},$ ${\sigma>0,}$ ${\nu>0}$} \label{sec:a=0_well-posedness_1}

We remark that in the periodic case, we can also prove the same theorem for $\sigma>0.$ In this case the norm is preserved, so we deal with $\omega$ rather than
$\tilde{\omega}.$
\begin{theorem} Let $a=0,$ $\sigma>0,$ and $\nu>0.$  Let $\omega_{av}\in\mathbb{R}$ be given.  Let $\omega_{0}\in B_{0}(\mathbb{S}).$
Assume $\|\omega_{0}\|_{B_{0}(\mathbb{S})}<\frac{\nu}{4}.$  Then there exists $\omega\in\mathcal{B}_{0}$ such that $\tilde{\omega}=\omega+\omega_{av}$ is a global
mild solution of \eqref{CLM}.  That is, this $\tilde{\omega}$ satisfies \eqref{mildForm} for all $t\geq0.$
\end{theorem}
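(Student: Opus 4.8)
The plan is to run the same Duhamel/contraction-mapping scheme as in the previous theorem, but with the linear part of the evolution chosen so that the smallness threshold on $\|\omega_{0}\|_{B_{0}}$ does not depend on $\omega_{av}$. Since $\sigma>0$ the mean $\omega_{av}$ is conserved under \eqref{CLM}, so I write $\tilde{\omega}=\omega+\omega_{av}$ with $\omega$ of zero spatial mean; substituting into \eqref{CLM} and using $H\omega_{av}=0$ and $\Lambda^{\sigma}\omega_{av}=0$ gives $\omega_{t}=\omega H\omega+\omega_{av}H\omega-\nu\Lambda^{\sigma}\omega$. The structural point is that the quadratic term $\omega H\omega$ again has zero spatial mean (recall $\int_{\mathbb{S}} f H f\,\mathrm{d}x=0$ for periodic $f$), whereas the linear term $\omega_{av}H\omega$—whose coefficient may be arbitrarily large—must be absorbed into the semigroup rather than treated as forcing. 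Accordingly I set $\mathcal{A}=-\nu\Lambda^{\sigma}+\omega_{av}H$, with Fourier multiplier $-\nu|k|^{\sigma}-\I\,\omega_{av}\,\mathrm{sgn}(k)$, and look for a fixed point of $\mathcal{T}\omega = e^{t\mathcal{A}}\omega_{0}+I^{+}(\omega H\omega)$ in the zero-mean subspace of $\mathcal{B}_{0}(\mathbb{S})$, where $I^{+}h(\cdot,t)=\int_{0}^{t}e^{(t-s)\mathcal{A}}h(\cdot,s)\,\mathrm{d}s$; a solution of $\omega=\mathcal{T}\omega$ yields, after adding back $\omega_{av}$, the asserted mild form of \eqref{CLM}.

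Next I would prove the two mapping estimates. First, $e^{t\mathcal{A}}$ sends $B_{0}(\mathbb{S})$ into $\mathcal{B}_{0}(\mathbb{S})$ with norm $\le 1$, since its symbol has modulus $e^{-\nu t|k|^{\sigma}}\le 1$ for $t\ge 0$ (exactly as for $e^{-\nu t}$ in the $\sigma=0$ proof), so $\|e^{t\mathcal{A}}\omega_{0}\|_{\mathcal{B}_{0}(\mathbb{S})}=\|\omega_{0}\|_{B_{0}(\mathbb{S})}$. Second, for $h$ of zero mean, $\|I^{+}h\|_{\mathcal{B}_{0}(\mathbb{S})}\le\nu^{-1}\|h\|_{\mathcal{B}_{0}(\mathbb{S})}$: on the $k$-th mode, $\sup_{t}\big|\int_{0}^{t}e^{(t-s)\mathcal{A}}\widehat{h}(k,s)\,\mathrm{d}s\big|\le\big(\sup_{\tau}|\widehat{h}(k,\tau)|\big)\sup_{t}\tfrac{1-e^{-\nu t|k|^{\sigma}}}{\nu|k|^{\sigma}}=\tfrac{\sup_{\tau}|\widehat{h}(k,\tau)|}{\nu|k|^{\sigma}}\le\tfrac{\sup_{\tau}|\widehat{h}(k,\tau)|}{\nu}$ because every nonzero integer $k$ satisfies $|k|\ge 1$, while the $k=0$ mode of $I^{+}h$ vanishes by the zero-mean hypothesis. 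Combining these with the algebra property of $\mathcal{B}_{0}$ and the bound $\|Hf\|_{\mathcal{B}_{0}}\le\|f\|_{\mathcal{B}_{0}}$ (since $|\widehat{H}(k)|\le 1$), one checks that $\mathcal{T}$ maps the (closed, hence complete) zero-mean subspace into itself, and on the closed ball of radius $r_{1}$ about $e^{t\mathcal{A}}\omega_{0}$, with $r_{0}:=\|e^{t\mathcal{A}}\omega_{0}\|_{\mathcal{B}_{0}}=\|\omega_{0}\|_{B_{0}}$, the self-map estimate $(r_{0}+r_{1})^{2}/\nu\le r_{1}$ and the contraction estimate $2(r_{0}+r_{1})/\nu<1$ both hold by taking $r_{0}=r_{1}<\nu/4$—verbatim the bookkeeping of the previous proof. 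The contraction mapping principle then produces the global $\omega\in\mathcal{B}_{0}$, and $\tilde{\omega}=\omega+\omega_{av}$ is the desired global mild solution.

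The one genuinely new point—and the reason this argument is confined to the periodic case—is the interplay between the zero mode and the low frequencies. Because $\Lambda^{\sigma}$ annihilates constants, $e^{-\nu t\Lambda^{\sigma}}$ is the identity on the mean and $\int_{0}^{t}e^{-\nu(t-s)\Lambda^{\sigma}}$ applied to a nonzero constant grows linearly in $t$; this forces one to work with zero-mean $\omega$ and to move $\omega_{av}H\omega$ into $\mathcal{A}$ (keeping it as a source would impose the spurious requirement $|\omega_{av}|<\nu$). On the torus the surviving modes all have $|k|\ge 1$, so $|k|^{\sigma}\ge 1$ and the Duhamel bound $\nu^{-1}|k|^{-\sigma}\le\nu^{-1}$ is uniform; on $\mathbb{R}$ there is a whole neighborhood of $\xi=0$ on which $|\xi|^{\sigma}$ is small and no such uniform bound exists, so the real-line case genuinely lies outside this method. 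Everything else—the semigroup and Duhamel estimates, the algebra and Hilbert-transform bounds, and the choice of $r_{0},r_{1}$—is a routine transcription of the $\sigma=0$ argument.
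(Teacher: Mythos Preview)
Your proof is correct and follows the paper's overall strategy: Duhamel formulation, contraction in the space-time Wiener algebra $\mathcal{B}_{0}(\mathbb{S})$, and the key estimate $\sup_{k\neq 0}\sup_{t}\frac{1-e^{-\nu t|k|^{\sigma}}}{\nu|k|^{\sigma}}=\frac{1}{\nu}$, which works precisely because $|k|\ge 1$ on the torus. The paper's proof is just a two-line sketch pointing to this estimate and saying the rest ``proceeds as in the previous theorem.''

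Where you differ---and this is a genuine improvement---is in the treatment of the linear term $\omega_{av}H\omega$. The paper's sketch does not say what to do with it; if one literally copies the $\sigma=0$ argument and leaves $\omega_{av}H\omega$ in the Duhamel forcing, the contraction estimate becomes $\tfrac{1}{\nu}\bigl(2(r_{0}+r_{1})+|\omega_{av}|\bigr)<1$, which would force a smallness condition on $\omega_{av}$ that the theorem does not assume. Your choice to absorb $\omega_{av}H$ into the generator $\mathcal{A}=-\nu\Lambda^{\sigma}+\omega_{av}H$ is exactly the right fix: the added piece is a purely imaginary Fourier multiplier, so $|e^{t\mathcal{A}}|=e^{-\nu t|k|^{\sigma}}$ on each mode and all the estimates go through unchanged, yielding the $\omega_{av}$-independent threshold $\nu/4$ as stated. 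Your observation that $\omega H\omega$ has zero mean (so $\mathcal{T}$ preserves the zero-mean subspace and the $k=0$ mode never enters $I^{+}$) is also a point the paper leaves implicit.
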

\begin{proof} We only remark on where the proof differs from the above.  The definition of $I^{+}$ becomes
\begin{equation}\nonumber
(I^{+}h)(\cdot,t)=\int_{0}^{t}e^{-\nu(t-s)\Lambda^{\sigma}}h(\cdot,s)\ \mathrm{d}s.
\end{equation}
Then the analogue of \eqref{integralToBeCompletedForI+Bound} is
\begin{equation}\nonumber
\|I^{+}h\|_{\mathcal{B}_{0}(\mathbb{S})}\leq\left(\sum_{k\in\mathbb{Z}\setminus\{0\}}\sup_{\tau\in[0,\infty)}|\hat{h}(k,\tau)|\right)
\left(\sup_{k\in\mathbb{Z}\setminus\{0\}}\sup_{t\in[0,\infty)}\int_{0}^{t}e^{-\nu|k|^{\sigma}(t-s)}\ \mathrm{d}s\right).
\end{equation}
On the right-hand side we recognize that the first factor is the norm of $h,$ and we evaluate the integral in the other factor.  This yields
\begin{equation}\nonumber
\|I^{+}h\|_{\mathcal{B}_{0}(\mathbb{S})}\leq \|h\|_{\mathcal{B}_{0}}
\left(\sup_{k\in\mathbb{Z}\setminus\{0\}}\sup_{t\in[0,\infty)}
\frac{1-e^{-\nu|k|^{\sigma}t}}{\nu|k|^{\sigma}}\right)=\frac{\|h\|_{\mathcal{B}_{0}(\mathbb{S})}}{\nu}.
\end{equation}
Then, the proof proceeds as in the previous theorem.  Notice that in the continuous case (if $k\in\mathbb{R}$ instead of $k\in\mathbb{Z}$), this
double supremum would not be finite.
\end{proof}

\subsection{${a=0}$ and ${\sigma=1}$} \label{sec:a0sigma1}

We next derive pole dynamics solutions for  $a=0$, $\sigma=1$ in which case  (\ref{CLM}) becomes
\begin{equation} \label{mainEquationa0sigma1_periodic}
\tilde{\omega}_{t}=\tilde{\omega} \mathcal{H}(\tilde{\omega})-\nu \mathcal{H}(\tilde{\omega}_x).
\end{equation}
Using $\tilde{\omega}=\omega_++\omega_-+\omega_{av}$ and  the Hilbert transform representation \e{Hilbert_rep} we can rewrite \e{mainEquationa0sigma1_periodic} as:
\begin{equation} \label{mainEquationa0sigma1_periodic_lower}
(\omega_-)_t=\I \omega_-^2 + \I \omega_{av} \omega_- -\I \nu (\omega_-)_x, \quad (\omega_+)_t=-\I \omega_+^2 -\I \omega_{av}\omega_+  +\I \nu (\omega_+)_x, \quad (\omega_{av})_t= 0.
\end{equation}

{\underline{\em One pair of c.c. poles (N=1).}
We look for a real-valued solution  to the above equations in the form of (\ref{omegasigma0_periodic_lower}).
Substituting \e{omegasigma0_periodic_lower} into \e{mainEquationa0sigma1_periodic_lower}  and equating like power poles gives evolution equations for the position and amplitude of the pole in the upper half-plane
\begin{align}\label{a0sigma1_periodic_eqns} 
\begin{split}
\frac{d\omega_{-1}(t)}{dt}&=(\I \omega_{av}(t)-\nu v_c(t))\omega_{-1}(t) + \frac{2\omega_{-1}^2(t)}{1+v_c(t)}, \\
 \frac{dv_c(t)}{dt} &=\omega_{-1}(t) + \frac{\nu}{2}(1-v_c^2(t)), \; \frac{d \omega_{av}(t)}{dt}=0.
 \end{split}
\end{align}
Solving the above equations gives:
\begin{equation}\label{a0sigma1_periodic_solution2_c}
\omega_{-1}(t) =\frac{ \omega_{-1}(0) e^{(\nu +\I \omega_{av}(0)) t} }{Z_+^2(t)}, \quad v_c(t) =-\frac{Z_-(t)}{Z_+(t)}, \quad \omega_{av}(t)=\omega_{av}(0), \\
\end{equation}
where $Z_\pm(t) = 1 - \frac{\omega_{-1}(0)}{1+v_c(0)}\frac{e^{\I \omega_{av}(0) t}-1}{\I \omega_{av}(0)} + \frac{\pm e^{\nu t}-1}{2}(1+v_c(0))$.


Alternatively, to get the solution \e{a0sigma1_periodic_solution2_c}, one can change variables $\tilde{t} = t, \tilde{x}=x-\I \nu t$ and rewrite (\ref{mainEquationa0sigma1_periodic_lower}) as
\begin{equation} \label{mainEquationa0sigma1_periodic_lower_changed}
(\omega_-)_{ \tilde{t}}=\I \omega_-^2 + \I \omega_{av} \omega_-,
\end{equation}
which is the same as (\ref{mainEquationa0sigma0_periodic_lower}) for $\nu=0$. Therefore the solution of (\ref{mainEquationa0sigma1_periodic_lower}) is
\begin{align}\label{omegasigma1_periodic_lower_solution_1}
\omega_-(x,t)&=\tilde{\omega}_{-1}(t)\left[\frac{1}{\tan(\frac{x-\I\nu t}{2})-\I \tilde{v}_c(t)} - \frac{1}{-\I-\I \tilde{v}_c(t)}\right] \nonumber \\
&=\omega_{-1}(t)\left[\frac{1}{\tan(\frac{x}{2})-\I v_c(t)} - \frac{1}{-\I-\I v_c(t)}\right],
\end{align}
where
\begin{equation}\label{a0sigma1_periodic_solution}
\omega_{-1}(t)=\tilde{\omega}_{-1}(t)\frac{1- \chi^2(t)}{(1+\tilde{v}_c(t) \chi(t))^2}, \
v_c(t)=\frac{\tilde{v}_c(t)+ \chi(t)}{1+\tilde{v}_c(t) \chi(t)}, \
\chi(t)=\tanh\left(\frac{\nu t}{2}\right),
\end{equation}
and  $\tilde{\omega}_{-1}(t), \tilde{v}_c(t)$ is a solution of \e{mainEquationa0sigma1_periodic_lower_changed}, which we can obtain by letting $\nu \rightarrow 0$
in  \e{a0sigma0_periodic_solution_c}, that is
\begin{equation}\label{a0sigma0_periodic_solution_c_nu0}
\tilde{\omega}_{-1}(t)=\frac{\omega_{-1}(0)e^{\I \omega_{av}(0)t}}{\left(1-\frac{\omega_{-1}(0)}{1+v_c(0)}\frac{e^{\I \omega_{av}(0)t}-1}{\I \omega_{av}(0)} \right)^2}, \
\tilde{v}_c(t)=\frac{v_c(0)+\frac{\omega_{-1}(0)}{1+v_c(0)}\frac{e^{\I \omega_{av}(0)t}-1}{\I \omega_{av}(0)}}{\left(1-\frac{\omega_{-1}(0)}{1+v_c(0)}\frac{e^{\I \omega_{av}(0)t}-1}{\I \omega_{av}(0)} \right)}, \
\omega_{av}(t)=\omega_{av}(0).
\end{equation}
Rewriting (\ref{a0sigma1_periodic_solution}) with (\ref{a0sigma0_periodic_solution_c_nu0}) gives the solution (\ref{a0sigma1_periodic_solution2_c}).

In the limit $\omega_{av}(0) \rightarrow 0$ the solution \e{a0sigma1_periodic_solution2_c} reduces to
\begin{equation}\label{a0sigma1_periodic_solution2}
\omega_{-1}(t)=\frac{ \omega_{-1}(0) e^{\nu t} }{(1 - \frac{\omega_{-1}(0)}{1+v_c(0)}t + \frac{e^{\nu t}-1}{2}(1+v_c(0)))^2 }, \
v_c(t)=\frac{ -1 + \frac{\omega_{-1}(0)}{1+v_c(0)}t + \frac{e^{\nu t}+1}{2}(1+v_c(0)) }{ 1 - \frac{\omega_{-1}(0)}{1+v_c(0)}t + \frac{e^{\nu t}-1}{2}(1+v_c(0)) }.
\end{equation}
When $\nu=0$ the solution \e{a0sigma1_periodic_solution2} is reduced to \e{a0sigma0_periodic_solution_nu0}.

We analyze \e{a0sigma1_periodic_solution2_c} to characterize collapsing solutions.
For simplicity, we consider the case with $\omega_{av}(0)=0$ although a more general case $\omega_{av}(0)\ne 0$ can be also analyzed giving qualitatively similar results.
Inspection of  $v_c(t)$ in (\ref{a0sigma1_periodic_solution2}) for real $\omega_{-1}(0)$ and $v_c(0)$ shows that collapse is possible when (i) $v_c(t_c)=0$ for $\omega_{-1}(0)<0$ and $v_c(0)>0$ or (ii) $v_c(t_c)=\infty$ for $\omega_{-1}(0)>0$ and $v_c(0)>0$. We consider these two cases in more detail.

{\underline{ \em (i)  $\omega_{-1}(0)<0$ \mbox{and} $v_c(0)>0$}}:  both denominators of (\ref{a0sigma1_periodic_solution2}) are positive for all $t>0$. Consider two parts of the numerator of $v_c(t)$ in (\ref{a0sigma1_periodic_solution2}):
\begin{equation} \label{a0sigma1_periodic_solution_numerator}
N_1(t)=-1 - \frac{|\omega_{-1}(0)|}{1+v_c(0)}t, \qquad N_2(t)=\frac{e^{\nu t}+1}{2}(1+v_c(0)),
\end{equation}
so that  numerator  is 0 whenever $-N_1(t)=N_2(t)$ and a solution is a collapsing solution. Both $-N_1(t)$ and $N_2(t)$ are increasing functions and the equation $-N_1(t)=N_2(t)$ can have 0, 1 or 2 solutions depending on $|\omega_{-1}(0)|$.
We fix $v_c(0)$ and find the smallest possible $|\omega_{-1}(0)|$ (i.e., the largest $\omega_{-1}(0)$)  so that there is  a collapsing solution. In this case the equation $-N_1(t)=N_2(t)$ has one solution and we have two conditions at the collapse time $t=t_c$:
\begin{equation} \label{a0sigma1_periodic_solution_single_sol_conditions}
-N_1(t_c)=N_2(t_c), \qquad -N_1'(t_c)=N_2'(t_c).
\end{equation}
Solving these two equations gives
\begin{equation} \label{a0sigma1_periodic_solution_single_sol}
e^{\nu t_c}(1-\nu t_c)=\frac{1-v_c(0)}{1+v_c(0)},
\qquad \omega_{-1}^{l}(0)=-\frac{\nu}{2}(1+v_c(0))^2 e^{\nu t_c}.
\end{equation}
where the superscript $l$ denotes that this is the largest such $\omega_{-1}(0)$ leading to a collapsing solution.

With this  $\omega_{-1}^{l}(0)$ we get from (\ref{a0sigma0_periodic_solution_norms})
\begin{equation} \nonumber
\| \omega(\cdot,0) \|_{B_0} =\frac{|\omega_{-1}(0)|}{v_c(0)}\left(\frac{|1 - v_c(0)|}{1 + v_c(0)} +1\right) \geq
\frac{\nu}{2}(1 + v_c(0))\left(\frac{|1 - v_c(0)|+ 1 + v_c(0)}{v_c(0)}\right) e^{\nu t_c} \geq 2e\nu,
\end{equation}
where the last inequality is obtained by minimizing the previous expression, taking into account relation \e{a0sigma1_periodic_solution_single_sol} between $v_c(0)$ and $\nu t_c$. This minimum is at $v_c(0)=1, \nu t_c=1$.

For the  $L^2$-norm  we get from (\ref{a0sigma0_periodic_solution_norms})
\begin{multline}\nonumber
\|\omega(\cdot,0)\|_{L^2} = 2\sqrt{\pi}\frac{|\omega_{-1}(0)|}{\sqrt{v_c(0)}(1 + v_c(0))} \geq 2\sqrt{\pi}\frac{|\omega_{-1}^{l}(0)|}{\sqrt{v_c(0)}(1 + v_c(0))}= \\
\sqrt{\pi}\nu\frac{(1+v_c(0))}{\sqrt{v_c(0)}} e^{\nu t_c}
=\frac{2\sqrt{\pi}\nu}{\sqrt{e^{-2\nu t_c}-(1-\nu t_c)^2}}
\geq \frac{2\sqrt{\pi}\nu}{\sqrt{e^{-2m}-(1-m)^2}} \approx 8.81 \nu,
\end{multline}
where
$\nu t_c=m=0.7968 \dots$
 minimizes the second to last expression in the above inequality.
This shows that $B_0$ and $L_2$ norms of the initial data of a collapsing solution cannot be made arbitrarily small.
Collapsing solutions exist for all $\omega_{-1}(0) \leq \omega_{-1}^{l}(0)$ defined in \e{a0sigma1_periodic_solution_single_sol}.

{\underline {\em (ii)  $\omega_{-1}(0)>0$ and $v_c(0)>0$}.  Note that the numerator of (\ref{a0sigma1_periodic_solution2}) is positive for all $t>0$.
Consider two parts of the denominator of (\ref{a0sigma1_periodic_solution2}):
\begin{equation}\nonumber
D_1(t)=1 - \frac{\omega_{-1}(0)}{1+v_c(0)}t, \qquad D_2(t)=\frac{e^{\nu t}-1}{2}(1+v_c(0)),
\end{equation}
so that the denominator of (\ref{a0sigma1_periodic_solution2}) is 0 whenever $-D_1(t)=D_2(t)$ and the  solution is a collapsing solution. Both $-D_1(t)$ and $D_2(t)$ are increasing functions and the equation $-D_1(t)=D_2(t)$ can have 0, 1 or 2 solutions depending on $\omega_{-1}(0)$.
Fix $v_c(0)$ and find the smallest possible $\omega_{-1}(0)$ so that we have a collapsing solution. In this case the equation $-D_1(t)=D_2(t)$ has one solution and we have two conditions at the collapse time $t=t_c$:
\begin{equation}\nonumber
-D_1(t_c)=D_2(t_c), \qquad -D_1'(t_c)=D_2'(t_c).
\end{equation}
Solving these two equations gives
\begin{equation} \label{a0sigma1_periodic_solution_single_sol_c2}
e^{\nu t_c}(\nu t_c-1)=\frac{1-v_c(0)}{1+v_c(0)},
\qquad \omega_{-1}^{s}(0)=\frac{\nu}{2}(1+v_c(0))^2 e^{\nu t_c}.
\end{equation}
where the superscript $s$ denotes that this is the smallest $\omega_{-1}(0)$ leading to collapse.

Inserting  $\omega_{-1}^{s}(0)$ into  (\ref{a0sigma0_periodic_solution_norms}) we get
\begin{equation}\nonumber
\| \omega(\cdot,0) \|_{B_0} =\frac{|\omega_{-1}(0)|}{v_c(0)}\left(\frac{|1 - v_c(0)|}{1 + v_c(0)} +1\right) \geq
\frac{\nu}{2}(1 + v_c(0))\left(\frac{|1 - v_c(0)|+ 1 + v_c(0)}{v_c(0)}\right) e^{\nu t_c} \geq 2e\nu,
\end{equation}
where the last inequality is obtained by minimizing the previous expression, taking into account relation \e{a0sigma1_periodic_solution_single_sol_c2} between $v_c(0)$ and $\nu t_c$. The minimum is at $v_c(0)=1, \nu t_c=1$.

For the  $L^2$-norm we get from (\ref{a0sigma0_periodic_solution_norms})
\begin{multline}\nonumber
\|\omega(\cdot,0)\|_{L^2} = 2\sqrt{\pi}\frac{|\omega_{-1}(0)|}{\sqrt{v_c(0)}(1 + v_c(0))} \geq 2\sqrt{\pi}\frac{\omega_{-1}^{s}(0)}{\sqrt{v_c(0)}(1 + v_c(0))} \\
=\sqrt{\pi}\nu\frac{(1+v_c(0))}{\sqrt{v_c(0)}} e^{\nu t_c}
=\frac{2\sqrt{\pi}\nu}{\sqrt{e^{-2\nu t_c}-(1-\nu t_c)^2}}
\geq \frac{2\sqrt{\pi}\nu}{\sqrt{e^{-2m}-(1-m)^2}} \approx 8.81 \nu,
\end{multline}
where $\nu t_c=m=0.7968 \dots$ minimizes $\|\omega(\cdot,0)\|_{L^2}$.
This shows that $B_0$ and $L_2$ norms of the initial data of a collapsing solution are bounded from below, i.e.,  cannot be made arbitrarily small.
Additionally, we find that collapsing solutions exist for all $\omega_{-1}(0) \geq \omega_{-1}^{s}(0)$ defined in \e{a0sigma1_periodic_solution_single_sol_c2}.

In both  (i) and (ii), the generic collapse
belongs to the general self-similar form (\ref{self-similar1}) with $\alpha=\beta=1$ and  $\| \omega(\cdot,t) \|_{B_0},\| \omega(\cdot,t) \|_{L^2} \rightarrow \infty$ as $t\rightarrow t_c$.  This follows from  $\omega_{-1}(t_c)\neq0$ and $v_c\sim (t_c-t)$ in  (i), and  $\omega_{-1} \sim (t_c-t)^{-2}$ and $v_c\sim (t_c-t)^{-1}$ in  (ii). However, there can also be non-generic collapse with different $\alpha$ and $\beta$.

When the initial data does not fall into case (i) or (ii), the solution exists globally in time. More precisely,
if $\omega_{-1}^{s}(0) < \omega_{-1}(0) < \omega_{-1}^{l}(0)$ then from (\ref{a0sigma1_periodic_solution2}) we have  $0<v_c(t)<\infty$ for all $t>0$ and the solution \e{omegasigma1_periodic_lower_solution_1}, \e{a0sigma1_periodic_solution2} exists for any $t>0$.

An exact pole dynamics solution to the problem on $\mathbb{R}$ for $a=0$, $\sigma=1$, which similarly consists of a pair of c.c. poles, was constructed
in \cite{AmbroseLushnikovSiegelSilantyev}.  This solution can blow up for arbitrarily small $B_0$ and $L^2$ data, illustrating a contrast between the problems on $\mathbb{R}$ and $\mathbb{S}$.

{\underline{\em $N>1$ pairs of c.c. poles.}  We again look for a solution of the form (\ref{N_poles}).
Substitute  this into the first equation of (\ref{mainEquationa0sigma1_periodic_lower}) and  separate products of poles into sums of poles using a partial fraction expansion.  Note that the presence of the derivative term $(\omega_{-})_x$ leads to factors of the form $\sec^2(x)/(\tan(\frac{x}{2}) - \I v_{c,k})^2$,  which need to be  reexpressed as  a sum of simple and double poles like those in (\ref{N_poles}). This is done  using $\tan(\frac{x}{2})=\tan(\frac{x}{2}) -\I v_{c,k} + \I v_{c,k}$ to rewrite the identity $\sec^2(\frac{x}{2})=1+\tan^2(\frac{x}{2})$  as
\begin{equation}\nonumber
\sec^2\left(\frac{x}{2}\right) = \left(\tan\left(\frac{x}{2}\right) -\I v_{c,k}\right)^2 + 2 \I v_{c,k} \left(\tan\left(\frac{x}{2}\right) -\I v_{c,k}\right) + 1-v_{c,k}^2,
\end{equation}
which immediately gives the desired representation. Equating like power poles then
gives evolution equations for the complex pole amplitudes and positions:
\begin{align}
\begin{split} \label{N_poles_a=0_sig=1}
\frac{d \omega_{-1,k}}{dt} &= \frac{2}{1 + v_{c,k}} \omega_{-1,k}^2 +2 \omega_{-1,k} \sum_{\stackrel{l=1}{l \neq k}}^{N} \omega_{-1,l} \left( \frac{1}{v_{c,k}-v_{c,l}} + \frac{1}{1+v_{c,l}} \right) + (\I \omega_{av} - \nu v_{c,k}) \omega_{-1,k} \\
\frac{d v_{c,k}}{dt} &=\omega_{-1,k} + \frac{\nu}{2} (1- v_{c,k}^2),
\end{split}
\end{align}
for $k=1, \ldots N$, while  $\omega_{av}$ satisfies the second equation of (\ref{a0sigma1_periodic_eqns}), so that $\omega_{av}(t)=\omega_{av}(0)$.
Note that (\ref{N_poles_a=0_sig=1}) reduces to  (\ref{a0sigma1_periodic_eqns}) when $N=1$.

An example numerical solution for $N=2$ with real-valued $\omega_{-1,k}$ and $v_{c,k}$ is shown in Figure \ref{fig:two_pole}. The example illustrates   nongeneric singularity formation in which $v_{c,1}(t) \rightarrow 0$ as $t \rightarrow t_c$ with $dv_{c,1}(t_c)/dt=0$.  In this case the pole `bounces off' the real line, and  (\ref{N_poles}) implies  that $\omega(0,t) \sim (t_c-t)^{-2}$ as $t \rightarrow t_c$ since $v_{c,1} \sim  (t_c-t)^2$ near the singularity time. It follows that the similarity exponents are $\alpha=\beta=2$. More typically a pole reaches the real line with $v_{c,1} \sim t-t_c$ as $t \rightarrow t_c$, so that $\alpha=\beta=1$.
This again shows singularity exponents are data dependent. \cblue{We anticipate that adding additional poles will give the flexibility to prescribe $v_c''(t_c)$ and even
higher-order derivatives of $v_c$ to be zero at the singularity time $t_c$,  enabling solutions with other distinct similarity exponents.}

\begin{figure}[h!]
\centering
\includegraphics[width=0.325\textwidth]{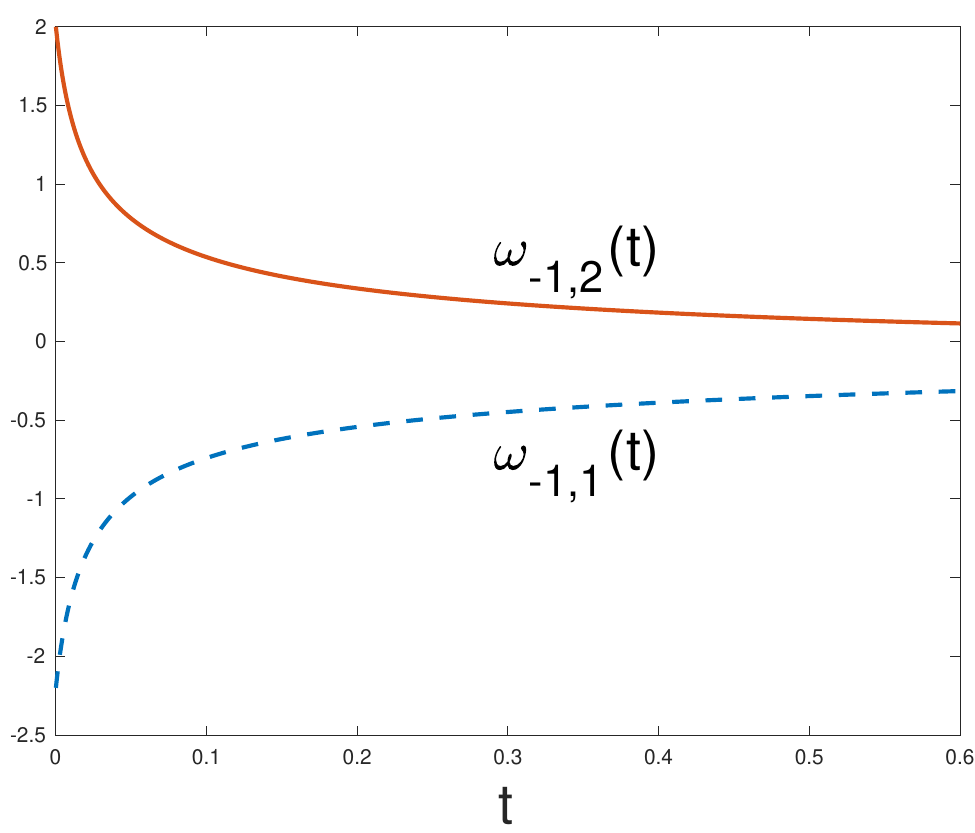}
\includegraphics[width=0.325\textwidth]{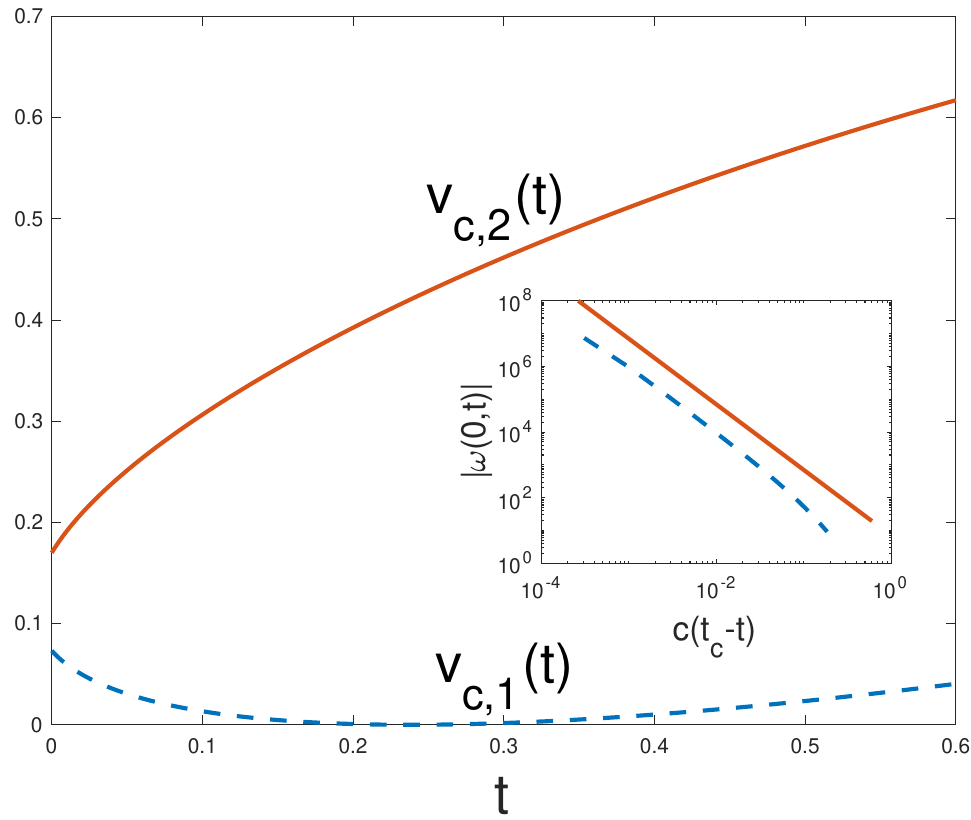}
\includegraphics[width=0.32\textwidth]{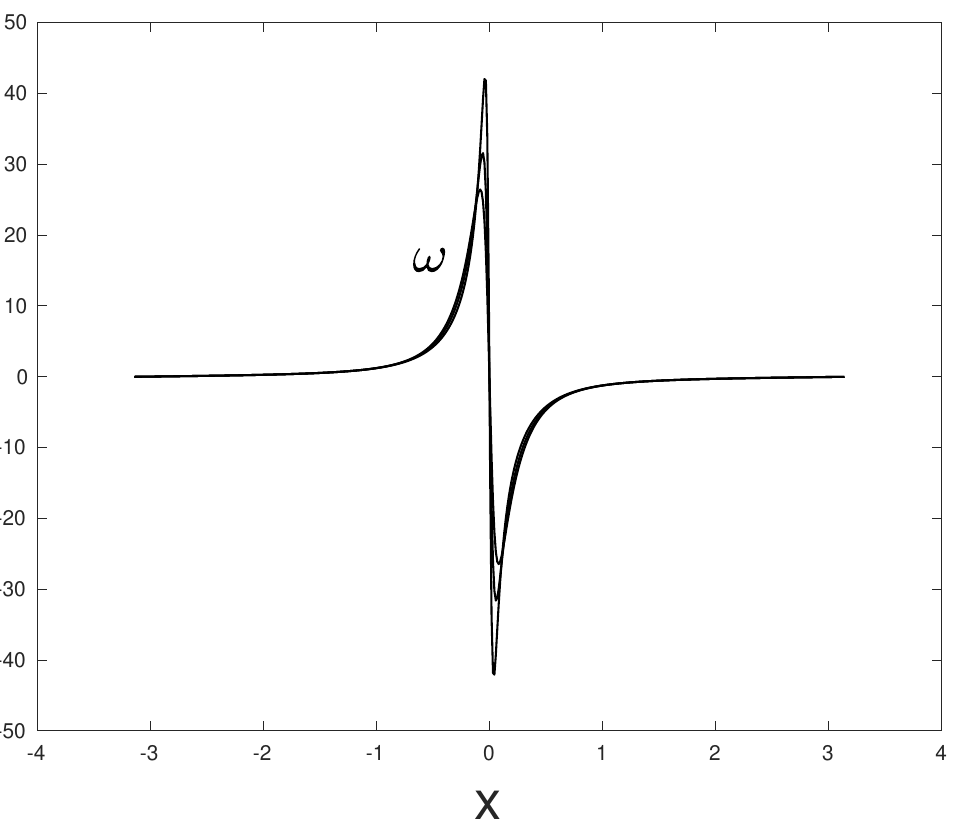}
\put(-450,110){\Large{(a)}}
\put(-295,110){\Large{(b)}}
\put(-137,110){\Large{(c)}}
\caption{ $N=2$ solution for  $\omega_{-1,1}(0)=-2.2$,  $\omega_{-1,2}(0)=2$, $v_{c,1}(0)=0.0732644,~v_{c,2}(0)=0.17$. (a) Time evolution of the pole amplitudes. (b) Time evolution of the pole positions. Inset: $|\omega(0,t)|$ versus $dv_{c,1}/dt \sim (t_c-t)$ (dashed blue curve), which approaches slope $-2$ (solid red curve)  as $t \rightarrow t_c$. This shows the similarity exponent $\beta=2$ in this example. \cblue{(c) Plot of $\omega$ at $t=0.029,~0.051,~0.079$.} }
 \label{fig:two_pole}
\end{figure}

\section{Pole dynamics solutions for $a=1/2$} \label{sec:a=one_half}

When $a=1/2$ the presence of the  nonlinear advection term $a u \omega_x$ in  (\ref{CLM})  generates  logarithmic singularities (starting from initial poles) which complicates finding a closed collection of dynamically evolving complex singularities. Instead of a simple pole, we look for a solution of \e{CLM} and \e{eq:omega_decomp} as $\tilde{\omega}=\omega_+ + \omega_- + \omega_{av}$ from \e{omegaplusminus} and \e{complexconjugationomega}   with $\omega_-$ a sum of a double pole and a simple pole in $X=\tan(\frac{x}{2})$-space \e{Xdef}:
\begin{align}\label{omegaa0.5sigma0_periodic_lower}
 &\omega_-(x,t)=\omega_{-1}(t)\left[\frac{1}{\tan(\frac{x}{2})-\I v_c(t)}- \frac{1}{-\I-\I v_c(t)}\right] \nonumber \\ & \qquad\qquad+ \omega_{-2}(t)\left[\frac{1}{(\tan(\frac{x}{2})-\I v_c(t))^2} - \frac{1}{(-\I-\I v_c(t))^2}\right],
\end{align}
where $\omega_{-1}(0), \omega_{-2}(0)$ and $v_c(0)$ are arbitrary complex constants with $Re[v_c(0)] > 0$ and $\omega_{-1}(t)$, $\omega_{-2}(t)\in\mathbb{C}$, $c(t)\in\mathbb{R}$ are  arbitrary smooth functions of time. The terms $\frac{1}{-\I-\I v_c(t)}$ and $\frac{1}{(-\I-\I v_c(t))^2}$ in \e{omegaa0.5sigma0_periodic_lower} are subtracted  in square brackets such that $\omega_-(x,t)$ has zero spatial mean value \cblue{ as required by \e{eq:omega_decomp}.}
As before, we supplement $\omega_-(x,t)$ from \e{omegaa0.5sigma0_periodic_lower} with the property \e{complexconjugationomega}
 to get a real-valued solution $\omega=\omega_-+\omega_+$ of \e{mainEquationa0.5sigma0_periodic}.  It is shown below in Section \ref{sec:a1p2sigma0} that a special choice of $\omega_{-1}(t)$ given by
 \begin{equation}\label{omegaa0.5sigma0_periodic_lower_1}
\omega_{-1}(t)=\frac{2\I v_c(t)}{1-v_c^2(t)}\omega_{-2}(t).
\end{equation}
 is required to avoid logarithmic singularities.


We compute the norms of $\omega=\omega_+ + \omega_-$, assuming  \e{complexconjugationomega},  \e{omegaa0.5sigma0_periodic_lower}, and \e{omegaa0.5sigma0_periodic_lower_1}:
\begin{equation}
\begin{split}
 \| \omega(\cdot,t) \|_{L^2}^2 &= 2 \pi |\omega_{-2}(t)|^2 \left( \frac{1+|v_c(t)|^2}{Re[v_c(t)]^3((1+|v_c(t)|^2)^2 - 4Re[v_c(t)]^2 ) } \right)  \label{a0.5sigma0_periodic_solution_norms}\\
 \| \omega(\cdot,t) \|_{B_0}& =\frac{|\omega_{-2}(t)|}{2Re[v_c(t)]^2}\left(\left|\frac{1 - v_c(t)}{1 + v_c(t)}\right| +1\right)\left(\left|\frac{1 + v_c(t)}{1 - v_c(t)}\right| +1\right).
\end{split}
\end{equation}
In the special case of purely imaginary $\omega_{-2}(t)$ and purely real $v_c(t)$, i.e.,
\begin{equation}\nonumber
\omega_{-2}(t)=\I \omega_{-2,i}(t), \quad Im[\omega_{-2,i}(t)]= 0, \quad Im[v_c(t)]=0,
\end{equation}
 we find that the norms \e{a0.5sigma0_periodic_solution_norms} simplify to
\begin{equation}
\begin{split}
 \| \omega(\cdot,t) \|_{L^2}^2 &= 2 \pi \omega_{-2,i}^2 \left( \frac{1+v_c^2}{v_c^3(1-v_c^2)^2 } \right)  \label{a0.5sigma0_periodic_solution_norms_real}\\
 \| \omega(\cdot,t) \|_{B_0}&=
\left\{\begin{matrix} \frac{2|\omega_{-2,i}(t)|}{v_c^2(t) (1-v_c^2(t))}, ~\mbox{for} ~ 0< v_c \leq 1 \\[7pt]  \frac{2|\omega_{-2,i}(t)|}{v_c^2(t)-1}, ~\mbox{for}~ v_c > 1.
~~~~~~~~ \end{matrix} \right.
\end{split}
\end{equation}

\subsection{$a=1/2$ and $\sigma=0$} \label{sec:a0.5sigma0}
\label{sec:a1p2sigma0}

We take $a=1/2$, $\sigma=0$, $\nu>0$, in which case  (\ref{CLM}) becomes
\begin{equation} \label{mainEquationa0.5sigma0_periodic}
\tilde{\omega}_{t}=-\frac{1}{2}u \tilde{\omega}_x +\tilde{ \omega} \mathcal{H}(\tilde{\omega})-\nu \tilde{\omega}, \quad u_x =\cal{H} \tilde{\omega}.
\end{equation}
%
%

Using the second equation in \e{mainEquationa0.5sigma0_periodic}, we recover $u(x,t) $ by integration as %
\begin{equation}\label{uint}
u(x)=\int_{x_0(t)}^{x}\mathcal{H}(\omega(x',t))dx',
 \end{equation}
 where $\omega=\omega_-+\omega_+$ with $\omega_-$ given by \e{omegaa0.5sigma0_periodic_lower} and  $\omega_+$   recovered from \e{complexconjugationomega}. Also $x_0(t)\in\mathbb{R}$ is an arbitrary smooth function of time. Note that one can also assume that
$x_0(t)\in\mathbb{C}$ which generalizes $u$ to complex values, which is beyond the scope of this paper.  Generally, the integration in  \e{uint} results in logarithmic terms.  These logarithmic terms show up only in the term $-\frac{1}{2}u\omega_x$ in \e{mainEquationa0.5sigma0_periodic} thus they cannot be canceled out with other terms which generally prevents  \e{omegaa0.5sigma0_periodic_lower} from being the exact solution of \e{mainEquationa0.5sigma0_periodic}. Thus we look for a restriction on values of  $\omega_{-1}(0)$ and $ \omega_{-2}(0)$ which ensures a vanishing of logarithmic terms. We perform a change of  variable \e{Xdef} 
in  \e{uint} resulting in
\begin{equation}\label{uintX}
u(x,t)=\int_{X_0(t)}^{X}\mathcal{H}(\omega(x',t))\frac{2}{X'^2+1}dX',
 \end{equation}
where $X_0(t)=\tan\left (\frac{x_0(t)}{2}\right).$

A partial fraction expansion  of the  integrand $\mathcal{H}(\omega(x))\frac{2}{X^2+1}$ of  \e{uintX} together with \e{Hilbert_rep}    and \e{omegaa0.5sigma0_periodic_lower}
reveal that the only source of the logarithmic singularity in the upper half-plane are the terms%
\begin{equation}\label{logsource}
\frac{-4\I\omega_{-2}(t)v_c(t)}{[X-\I v_c(t)][1-v_c^2(t)]^2}+\frac{2\omega_{-1}(t)}{[X-\I v_c(t)][1-v_c^2(t)]}
\end{equation}
which cancel out under the condition \e{omegaa0.5sigma0_periodic_lower_1}.
%

We additionally wish to avoid a constant term in the integrand of  \e{uint} because it results in a term $\propto x$ in $u$ which prevents \e{omegaa0.5sigma0_periodic_lower} from being the exact solution of  \e{mainEquationa0.5sigma0_periodic}.
Therefore, in  the integrand $\mathcal{H}(\omega(x))\frac{2}{X^2+1}$ of  \e{uintX}, we want to remove terms $\propto \frac{1}{X^2+1}$ after the partial fraction expansion. However, condition \e{omegaa0.5sigma0_periodic_lower_1}  also ensures that removal.  Then the integrand  $\mathcal{H}(\omega(x))\frac{2}{X^2+1}$ in \e{uintX} is given by %
\begin{equation}\label{uintegrandap1p2sigma0}
\mathcal{H}(\omega(x,t))\frac{2}{X^2+1} =\frac{2\I\omega_{-2}(t)}{[X-\I v_c(t)]^2[1- v_c^2(t)]}-\frac{2\I\bar \omega_{-2}(t)}{[X+\I\bar v_c(t)]^2[1- \bar v_c^2(t)]}
\end{equation}
which is immediately integrated in $X$\ giving%
\begin{equation}\label{uap1p2sigma0a}
u(x,t)=\frac{-2\I\omega_{-2}(t)}{[X-\I v_c(t)][1- v_c^2(t)]}+\frac{2\I\bar \omega_{-2}(t)}{[X+\I\bar v_c(t)][1- \bar v_c^2(t)]}+ q(t),
\end{equation}
where $ q(t)\in\mathbb{R}$ is an arbitrary smooth function of time determined by $X_0(t)$ of  \e{uintX}.
It is convenient to rewrite  \e{uap1p2sigma0a} in the following equivalent form
\begin{align}\label{uap1p2sigma0}
u(x,t)=\frac{-2\I\omega_{-2}(t)}{[1- v_c^2(t)]}\left[\frac{1}{X-\I v_c(t)} - \frac{1}{-\I-\I v_c(t)}\right]\nonumber \\+\frac{2\I\bar \omega_{-2}(t)}{[1- \bar v_c^2(t)]}\left[\frac{1}{X+\I\bar v_c(t)} - \frac{1}{\I+\I\bar  v_c(t)}\right]+ \tilde q(t),
\end{align}
where any nonzero  mean value $\frac{1}{2\pi}\int_{-\pi}^\pi{u(x,t)dx}$ of $u(x,t)$ is absorbed into a term $\tilde q(t)$ such that%
\begin{equation}\label{qtdef}
 \tilde q(t)=\frac{-2\I\omega_{-2}(t)}{[1- v_c^2(t)]}  \frac{1}{[-\I-\I v_c(t)]}
 +\frac{2\I\bar \omega_{-2}(t)}{[1- \bar v_c^2(t)]}  \frac{1}{[\I+\I\bar  v_c(t)]}+ q(t).
\end{equation}
%

%
%
Equation \e{CLM} is not affected by any choice of $q(t)$, thus it is matter of convenience to use either $q(t)$ or $\tilde q(t)$. While $\tilde q(t)$ might be fixed by a specific physical application of  \e{mainEquationa0.5sigma0_periodic}, the easiest choice is to set $\tilde q(t)\equiv 0$ with  \e{qtdef} then providing the explicit form for $q(t)$.

Equations \e{Hilbert_rep},   \e{complexconjugationomega} \e{omegaa0.5sigma0_periodic_lower},
\e{uintX}-\e{uap1p2sigma0}
result in terms proportional of different powers of $[X-\I v_c(t)]$ in  \e{mainEquationa0.5sigma0_periodic}. The most singular term $\propto [X-\I v_c(t)]^{-3}$  vanishes provided
\begin{equation}\label{vcODEa1p2sigma0}
\frac{dv_c}{dt} =-\frac{\I \left(1- v_c^2\right) \bar \omega_{-2}}{2 \left(  1-\bar v_c^2\right) ( v_c+\bar  v_c)}+\omega_{-2}\frac{\I v_c}{2(1-v_c^2)}-\frac{\I q(t)}{4}(1-v_c^2).
\end{equation}

The next order singular term  $\propto [X-\I v_c(t)]^{-2}$  vanishes provided
\begin{align}\label{omm2ODEa1p2sigma0}
\frac{d\omega_{-2}}{dt} =\frac{\I \left(1-2 v_c^2\right)  \omega_{-2}^2}{\left(1- v_c^2\right)^2}+[- \nu+\I q(t)v_c\,+\I \omega_{av}(t)]\omega_{-2}\nonumber \\+\I|\omega_{-2}|^2\left [\frac{ 1 }{( v_c+\bar  v_c)^2}+\frac{1}{  1-\bar v_c^2}\right ].
\end{align}
The function $\omega_{av}(t)$ is determined at order  $\propto [X-\I v_c(t)]^{0}$ as
${d \omega_{av}}/{dt}=-\nu  \omega_{av}$, i.e.
\begin{equation}\label{ca1p2sigma0}
\omega_{av}(t)=\omega_{av}(0)e^{-\nu t}.
\end{equation}
In summary,  \e{vcODEa1p2sigma0}-\e{ca1p2sigma0}
together with initial conditions $\omega_{-2}(0)$, $v_c(0)\in\mathbb{C}$ and $\omega_{av}(0)\in\mathbb{R}$, define the initial value problem to determine $\omega_{-2}(t)$ and $v_c(t)$. Equations \e{vcODEa1p2sigma0}-\e{ca1p2sigma0} 
 define a  system of two complex-valued ODEs, or, equivalently, a system of four real-valued ODEs.
That system includes an arbitrary smooth function $q(t)$ which can be fixed only if we use a physical interpretation of the velocity $u(t)$, e.g., one can set $\tilde q(t)\equiv 0$ together with \e{qtdef}.

\subsubsection{Implicit solution of \e{vcODEa1p2sigma0}-\e{ca1p2sigma0}
} \label{sec:implicit_formula}
A particular reduction of the ODE system \e{vcODEa1p2sigma0}-\e{ca1p2sigma0}  occurs  for
purely imaginary $\omega_{-2}(t)$ and purely real $v_c(t)$, i.e.,
\begin{equation}\label{ODEreductiona1p2sigmao0}
\omega_{-2}(t)=\I \omega_{-2,i}(t), \quad Im[\omega_{-2,i}(t)]= 0, \quad Im[v_c(t)]=0.
\end{equation}
Then  \e{vcODEa1p2sigma0}-\e{omm2ODEa1p2sigma0} imply $\omega_{av}(t)\equiv q(t)\equiv 0$.  Equations  \e{vcODEa1p2sigma0}-\e{ODEreductiona1p2sigmao0} result in the real-valued ODE system
\begin{equation}\label{a0.5sigma0_periodic_eqns} 
\frac{d \omega_{-2,i}}{dt}=  \omega_{-2,i}^2\frac{(1-2v_c^2+5v_c^4)}{4v_c^2(1-v_c^2)^2} - \nu  \omega_{-2,i},  \qquad
\frac{dv_c}{dt} =-\omega_{-2,i}\frac{1+v_c^2}{4v_c(1-v_c^2)},
\end{equation}

%
 %

After rewriting the first equation in \e{a0.5sigma0_periodic_eqns} as
\begin{align}\label{logchange}
\frac{d(\log(\omega_{-2,i})+ \nu t)}{dt}\nonumber
= \omega_{-2,i} \frac{(1-2v_c^2+5v_c^4)}{4v_c^2(1-v_c^2)^2}
\end{align}
and dividing it by the second equation to exclude  $\omega_{-2,i}$ we obtain that
\begin{align}
\frac{d(\log(\omega_{-2,i})+ \nu t)}{dt}= - \frac{(1-2v_c^2+5v_c^4)}{v_c(1-v_c^2)(1+v_c^2)}  \frac{dv_c}{dt} \nonumber \\
 \qquad \qquad= \left(\frac{1}{v_c-1}+\frac{1}{v_c+1}-\frac{1}{v_c}+\frac{4v_c}{1+v_c^2}  \right)\frac{dv_c}{dt}, \nonumber
\end{align}
which has a solution
\begin{equation}\label{a0.5sigma0_periodic_solution1} 
\omega_{-2,i}(t)=  \omega_{-2,i}(0)e^{- \nu t} \frac{v_c(0)}{v_c(t)}  \left(\frac{v_c^2(t)-1}{v_c^2(0)-1}\right) \left(\frac{v_c^2(t)+1}{v_c^2(0)+1}\right)^2.
\end{equation}
Substituting in \e{a0.5sigma0_periodic_eqns} we find
that\begin{equation}\label{a0.5sigma0_periodic_eqns_Vc} 
\frac{dv_c}{dt}=  \frac{\omega_{-2,i}(0)v_c(0)}{(v_c^2(0) - 1)(v_c^2(0)+1)^2} e^{- \nu t} \frac{(v_c^2(t)+1)^3}{4v_c^2(t)},
\end{equation}
and solving it for $v_c(t)$ we obtain an implicit solution
\begin{align}\label{a0.5sigma0_periodic_solution2} 
&\frac{v_c(t)(v_c^2(t) - 1)}{(v_c^2(t)+1)^2} + \arctan[v_c(t)]\nonumber \\= &\qquad\frac{v_c(0)(v_c^2(0) - 1)}{(v_c^2(0)+1)^2}\left( 1 + \frac{2\omega_{-2,i}(0)(1-e^{- \nu t})}{\nu (v_c^2(0) - 1)^2} \right) + \arctan[v_c(0)].
\end{align}
If $\nu=0$, then the expression $(1 - e^{-\nu t})/\nu$ is replaced by $t$.
Here and below without loss of generality we assume a principle branch of $\arctan$.

\subsubsection{Collapse vs. global existence} \label{sec:implicit_start}

We now analyze  \e{a0.5sigma0_periodic_solution2} to find conditions for collapse vs. global existence of the solution \e{omegaa0.5sigma0_periodic_lower}. There are two possibilities for collapse at the collapse time $t=t_{c}:$ either (A) $v_c(t)\to 0^{+},$ or (B)
$v_c(t)\to +\infty  $ as $t\to t_c^-$ (for the second case a  singularity in the complex $x$-plane  approaches  the real line $Re[x]=x$ at $x=\pm \pi $ since $|\tan(\frac{x}{2})|_{x\to\pm \pi}\to \infty$). 
In either case, $|\omega_{-2,i}(t)|\to\infty$ diverges as $t\to t_c^-$  which immediately follows from  \e{a0.5sigma0_periodic_solution1}. \cblue{Consequently, as shown in Theorem \ref{thm:nec_suff} below, both $\| \omega(\cdot, t) \|_{L^2}$ and $\| \omega(\cdot, t) \|_{B_0}$ also diverge at the critical time.} It is further  implied by   \e{a0.5sigma0_periodic_solution1} that  $|\omega_{-2,i}(t)|$ diverges if and only if either (A) or (B) are satisfied. Thus it is sufficient to consider only cases (A) and (B) to fully characterize blowup  of solutions to equations \e{a0.5sigma0_periodic_solution1}  and \e{a0.5sigma0_periodic_solution2}; we describe these cases as
 {\it\  blowup A} and {\it\ blowup B}, respectively.


Equation \e{omegaa0.5sigma0_periodic_lower}  requires that  $v_c(0)>0.$   Thus we look for  blowup in the parametric half-plane $(v_c(0), \omega_{-2,i}(0))$  of all possible initial conditions with  $v_c(0)>0$. That half-plane is divided into four sectors as we now describe.
\begin{equation}\label{condition1}
\text{Sector (1):} \qquad   \omega_{-2,i}(0)>0, \qquad 0<v_c(0)<1.
\end{equation}
 Then  \e{a0.5sigma0_periodic_eqns_Vc} implies that  $\frac{dv_c}{dt}<0$ for any $t\ge 0$ at which a solution exists, i.e. $v_c(t)$ monotonically decreases. Thus only the type A blowup  is allowed in that case. There are only three possibilities due to that monotonicity: (1.a) $v_c(t_c)=0$ for some $0<t_c< \infty;$ (1.b) $0<\lim \limits_{t\to+\infty}v_c(t)<1;$      (1.c)$\lim \limits_{t\to+\infty}v_c(t)=0.$  Case (1.a) is by definition type A  blowup  of the solution \e{omegaa0.5sigma0_periodic_lower}.  Cases (1.b) and (1.c) imply global existence of the solution \e{omegaa0.5sigma0_periodic_lower}.
\begin{equation}\nonumber
\text{Sector (2):} \qquad   \omega_{-2,i}(0)>0, \qquad v_c(0)>1.
\end{equation}
Then  \e{a0.5sigma0_periodic_eqns_Vc} implies that  $\frac{dv_c}{dt}>0$ for any $t\ge 0$ at which a solution exists, i.e., $v_c(t)$ monotonically increases. Thus only  type B  blowup is allowed in that case. There are only three possibilities due to that monotonicity: (2.a) $v_c(t_c)=\infty$ for some $0<t_c< \infty;$ (2.b) $ 1<\lim \limits_{t\to+\infty}v_c(t)<\infty;$      (2.c)$\lim \limits_{t\to+\infty}v_c(t)=\infty.$   Case (2.a) is type B blowup of the solution \e{omegaa0.5sigma0_periodic_lower}.  The cases (2.b) and (2.c) imply global existence of the solution \e{omegaa0.5sigma0_periodic_lower}.
\begin{equation}\nonumber
\text{Sector (3):} \qquad  \omega_{-2,i}(0)<0, \qquad 0<v_c(0)<1.
\end{equation}
  Then  \e{a0.5sigma0_periodic_eqns_Vc} implies that  $\frac{dv_c}{dt}>0$ for any $t\ge 0$ at which a solution exists, i.e., $v_c(t)$ monotonically increases. Thus only the type B blowup is allowed in that case. There are only three possibilities due to that monotonicity: (3.a) $v_c(t_1)=1$ for some $0<t_1<\infty;$ (3.b) $0<\lim \limits_{t\to+\infty}v_c(t)<1; $      (3.c)$\lim \limits_{t\to+\infty}v_c(t)=1.$  The cases (b) and (c) imply  global existence of the solution \e{omegaa0.5sigma0_periodic_lower}. The case (3.a) implies that the solution $v_c(t)$ extends into $v_c(t)>1$ for $t>t_1.$ To prove that we use  \e{a0.5sigma0_periodic_eqns_Vc} at $t=t_1$ giving  $\left .\frac{dv_c}{dt}\right |_{t=t_1}=  \frac{\omega_{-2,i}(0)v_c(0)}{(v_c^2(0) - 1)(v_c^2(0)+1)^2} e^{- \nu t_1}>0,$ i.e., $v_c(t)$ crosses $t=t_1$ with  finite positive speed. Then one can use an arbitrary small positive constant $\varepsilon$ to define new initial conditions
$v_c(t_1+\varepsilon)=1+\varepsilon\left .\frac{dv_c}{dt}\right |_{t=t_1}+O(\varepsilon^2)>1$ and $\omega_{-2,i}(t_1+\varepsilon)>0$ (determined by  \e{a0.5sigma0_periodic_solution1}) at $t=t_1+\varepsilon$ thus reproducing the case of Sector (2) with new initial conditions.
\begin{equation}\label{condition4}
\text{Sector (4):} \qquad  \omega_{-2,i}(0)<0, \qquad  1<v_c(0).
\end{equation}
Then  \e{a0.5sigma0_periodic_eqns_Vc} implies that  $\frac{dv_c}{dt}<0$ for any $t\ge 0$ at which a solution exists, i.e. $v_c(t)$ monotonically decreases. Thus only the type A blowup is allowed in that case. There are only three possibilities due to that monotonicity: (4.a) $v_c(t_1)=1$ for some $0<t_1<\infty;$ (4.b) $1<\lim \limits_{t\to+\infty}v_c(t);$      (4.c)$\lim \limits_{t\to+\infty}v_c(t)=1.$  Cases (4.b) and (4.c) imply  global existence of the solution \e{omegaa0.5sigma0_periodic_lower}.  Case (4.a) implies that the solution $v_c(t)$ extends into $v_c(t)<1$ for $t>t_1.$ To prove that we use  \e{a0.5sigma0_periodic_eqns_Vc} at $t=t_1$ giving  $\left .\frac{dv_c}{dt}\right |_{t=t_1}=  \frac{\omega_{-2,i}(0)v_c(0)}{(v_c^2(0) - 1)(v_c^2(0)+1)^2} e^{- \nu t_1}<0,$ i.e. $v_c(t)$ crosses $t=t_1$ with finite negative speed. Then one can use an arbitrary small positive constant $\varepsilon$ to define new initial conditions
$v_c(t_1+\varepsilon)=1+\varepsilon\left .\frac{dv_c}{dt}\right |_{t=t_1}+O(\varepsilon^2)<1$ and $\omega_{-2,i}(t_1+\varepsilon)<0$ (determined by  \e{a0.5sigma0_periodic_solution1}) at $t=t_1+\varepsilon$ thus reproducing the case of Sector (1) with new initial conditions.

To complete our analysis in Sectors (1)-(4) above we have to determine exact boundaries between blowup  and global existence in the parametric half-plane $(v_c(0), \omega_{-2,i}(0))$   using the implicit solution \e{a0.5sigma0_periodic_solution2}.

\subsubsection{
Necessary conditions for  the type A blowup}

It will be helpful to consider the function
\begin{equation}\label{f-firstTime}
f_0(x):=(x^{2} - 1)^2+\frac{(x^{2}+1)^{2}}{x}(x^{2} - 1)\arctan(x),
\end{equation}
for $x\geq0.$  We define $f_{0}(0)=0,$ as this is the limit from the interior of the domain.
Furthermore, it is clear that $f_{0}(1)=0.$  A plot of $f_0$ is shown in Fig. \ref{fig:f0plot} with %
\begin{equation}\label{fproperties}
-0.73040598\ldots<f_0(v_c(0))<0 \ \text{for} \ 0<v_c(0)<1 \ \text{and} \  f_0(v_c(0))>0 \ \text{for} \ v_c(0)>1.
\end{equation}
\begin{figure}[h]
\centering
\includegraphics[width=4in]{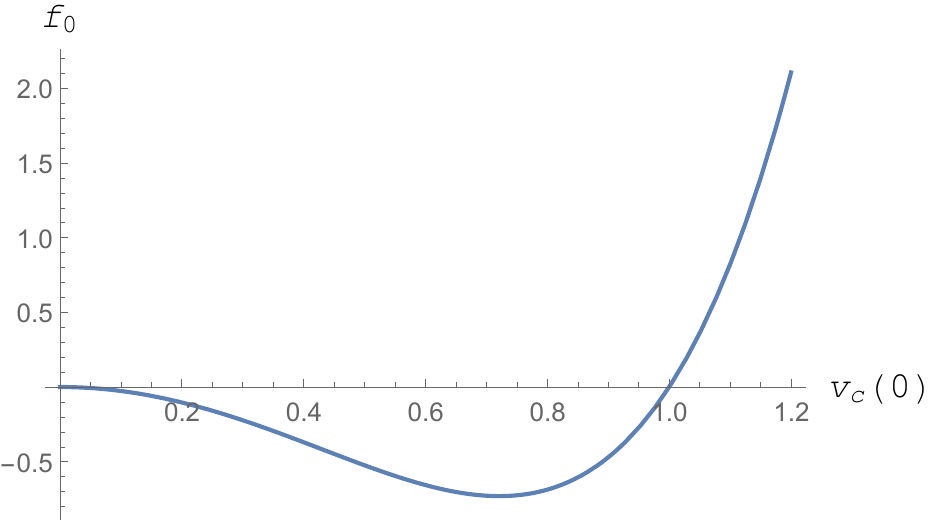}
\caption{Plot of  $f_0(v_c(0))$ from  \e{f-firstTime}
with $x=v_{c}(0)$.
 }
\label{fig:f0plot}
\end{figure}
A minimum of $f_0$ is located at %
\begin{equation}\label{vc0min}
v_c(0)=v_{0,min}=0.7211367\ldots
\end{equation}
as obtained from a solution of the transcendental equation $f_0'(v_{0,min})=0$ with %
\begin{equation}\label{f0mindepth}
f_0(v_{0,min})=-0.73040598\ldots
\end{equation}

\begin{lemma}\label{f0properties} The only zeros of  $f_{0}(x) $ for $x\ge 0$ are  $x=0$ and $x=1$.   Also $f_{0}(x)<0$ if and only if $x\in(0,1)$ and $f_{0}(x)>0$ if and only if $x\in(1,\infty)$.
\end{lemma}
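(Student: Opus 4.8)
The plan is to factor out the obvious common factor and reduce the whole statement to a single positivity claim. Writing
\[
f_0(x)=(x^{2}-1)\,g(x),\qquad g(x):=(x^{2}-1)+\frac{(x^{2}+1)^{2}}{x}\arctan(x),
\]
we see that for $x>0$ the sign of $f_0(x)$ coincides with the sign of $x^{2}-1$ \emph{provided} we can show $g(x)>0$ for every $x>0$. Granting that, $f_0(x)<0$ exactly on $(0,1)$, $f_0(1)=0$, $f_0(x)>0$ exactly on $(1,\infty)$, and $f_0(0)=0$ holds by the limiting definition adopted in the text; this is precisely the content of the lemma. So the entire proof comes down to the inequality $g>0$ on $(0,\infty)$.

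For $x\ge 1$ this is immediate: $x^{2}-1\ge 0$ and $\tfrac{(x^{2}+1)^{2}}{x}\arctan(x)>0$, with $g(1)=\pi>0$. The only work is the interval $0<x<1$, where $x^{2}-1<0$ and the two terms compete. Here I would use the elementary bound $\arctan(x)>x-\tfrac{x^{3}}{3}$ for $x>0$, which follows from $\frac{d}{dx}\bigl(\arctan x-x+\tfrac{x^{3}}{3}\bigr)=\frac{x^{4}}{1+x^{2}}>0$ together with the value $0$ at $x=0$. Substituting this into $g$ and setting $u=x^{2}\in(0,1)$ gives, after expanding $(u+1)^{2}(1-u/3)$,
\[
g(x)>(u-1)+(u+1)^{2}\Bigl(1-\frac{u}{3}\Bigr)=\frac{u}{3}\bigl(8+u-u^{2}\bigr)=\frac{x^{2}}{3}\bigl(8+x^{2}-x^{4}\bigr)>0,
\]
since $8+u-u^{2}\ge 7>0$ for $u\in[0,1]$. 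Hence $g(x)>0$ on $(0,1)$ as well, completing the positivity claim and therefore the lemma.

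I do not expect a genuine obstacle here; the only thing requiring a moment's thought is choosing a Taylor lower bound for $\arctan$ that is tight enough to win on all of $(0,1)$ — the cubic truncation $x-x^{3}/3$ suffices with room to spare, as the displayed computation shows, so no higher-order estimate or case-splitting within $(0,1)$ is needed. One should also double-check the limiting value $f_0(0)=0$ is consistent (indeed $\tfrac{(x^{2}+1)^{2}}{x}\arctan(x)\to 1$ and $x^{2}-1\to-1$ as $x\to 0^{+}$, so $f_0\to 0$), which matches the convention $f_0(0)=0$ used earlier.
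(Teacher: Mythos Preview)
Your proof is correct. Both you and the paper reduce the problem to the positivity of the second factor after pulling out $(x^{2}-1)$; equivalently, both must show $\arctan(x)>\dfrac{x(1-x^{2})}{(x^{2}+1)^{2}}$ for $x>0$. The difference is in how that inequality is established. The paper compares derivatives directly: both sides vanish at $0$, and differentiating reduces the claim to $x^{4}-6x^{2}+1<(x^{2}+1)^{2}$, which is immediate. You instead interpose the Taylor lower bound $\arctan(x)>x-x^{3}/3$ and then verify the resulting polynomial inequality $8+u-u^{2}>0$ on $(0,1)$. The paper's route is a touch more streamlined (one differentiation, one obvious inequality), while yours has the advantage of invoking a well-known estimate and making the margin explicit through a clean factorization; either is perfectly adequate here.
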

\begin{proof}
To begin we consider $x\in(0,1),$ and on this interval we expect $f_{0}(x)<0.$  To establish this is
equivalent to establishing
\begin{equation}\label{shownForCertainX}
(x^{2}-1)^{2}<(1-x^{2})\frac{(x^{2}+1)^{2}}{x}\arctan(x),
\end{equation}
which in turn is equivalent to establishing
\begin{equation}\label{shownForAllPositiveX}
\frac{(1-x^{2})x}{(x^{2}+1)^{2}}< \arctan(x).
\end{equation}
Notice that both sides are equal to zero at $x=0.$  It is therefore sufficient to establish
that the derivative of the left-hand side is less than the derivative of the right-hand side.
Differentiating, we see that it is enough to show
\begin{equation}\nonumber
\frac{x^{4}-6x^{2}+1}{(x^{2}+1)^{3}}<\frac{1}{x^{2}+1}.
\end{equation}
Rearranging factors from the denominators, we see that we are simply trying to demonstrate
\begin{equation}\nonumber
x^{4}-6x^{2}+1 < (x^{2}+1)^{2}=x^{4}+2x^{2}+1,
\end{equation}
which is clearly true.
This demonstrates that \eqref{shownForAllPositiveX} holds for all $x>0.$
This then implies \eqref{shownForCertainX} for all $x\in(0,1).$
The same argument, adjusting for the sign of $1-x^{2},$ shows that $f_{0}(x)>0$ for $x>1.$
We conclude $f_{0}(x)<0$ if and only if $x\in(0,1)$ and $f_{0}(x)>0$ if and only if $x\in(1,\infty).$
\end{proof}

\begin{lemma}\label{typeANecessary}
If type A blowup occurs, then it must either be the case that
\begin{equation}\label{vcm1}
 0<v_{c}(0)<1 \quad \mathrm{and}\quad
\omega_{-2,i}(0)>-\frac{\nu}{2}f_{0}(v_{c}(0)),
\end{equation}
or
\begin{equation}\label{vcp1}
v_c(0)>1 \quad \mathrm{and}\quad \omega_{-2,i}(0)<-\frac{\nu}{2}f_0(v_c(0)).
\end{equation}
\end{lemma}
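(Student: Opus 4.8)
The approach I would take is to read the constraint directly off the implicit solution \e{a0.5sigma0_periodic_solution2}. Introduce $g(x):=\frac{x(x^2-1)}{(x^2+1)^2}+\arctan x$, so that \e{a0.5sigma0_periodic_solution2} takes the compact form
\begin{equation}\nonumber
g(v_c(t))=g(v_c(0))+C\,\omega_{-2,i}(0)\,\bigl(1-e^{-\nu t}\bigr),\qquad C:=\frac{2\,v_c(0)}{\nu\,(v_c^2(0)-1)(v_c^2(0)+1)^2}
\end{equation}
(with $1-e^{-\nu t}$ replaced by $\nu t$ if $\nu=0$). Two elementary facts power the argument: $g$ is continuous with $g(0)=0$ and $g'(x)=8x^2/(x^2+1)^3\ge0$, so $g(v_c(t))\to0$ whenever $v_c(t)\to0^+$; and clearing denominators in the definition \e{f-firstTime} of $f_0$ gives the identity $f_0(x)=g(x)(x^2-1)(x^2+1)^2/x$ for $x>0$, whence $-g(v_c(0))/C=-\tfrac{\nu}{2}f_0(v_c(0))$.

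First I would record that any type A blowup trajectory satisfies $C\,\omega_{-2,i}(0)<0$. This is immediate from \e{a0.5sigma0_periodic_eqns_Vc}, which reads $\frac{dv_c}{dt}=C\,\omega_{-2,i}(0)\,e^{-\nu t}(v_c^2+1)^3/(4v_c^2)$: the second factor is strictly positive, so $\frac{dv_c}{dt}$ keeps the constant sign of $C\,\omega_{-2,i}(0)$, and since $v_c$ must strictly decrease in order to reach $0$ we conclude $C\,\omega_{-2,i}(0)<0$ (equivalently, type A blowup is confined to Sectors (1) and (4)). The argument does not care whether the trajectory passes through $v_c=1$, because the ODE \e{a0.5sigma0_periodic_eqns_Vc} is regular there and \e{a0.5sigma0_periodic_solution2} stays valid all the way up to $t_c$.

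The heart of the proof is then to let $t\to t_c^-$ in the displayed equation. Because we are assuming finite-time blowup, $t_c<\infty$, so for $\nu>0$ we have $0<1-e^{-\nu t_c}<1$, while $g(v_c(t))\to g(0)=0$. Combining this with $C\,\omega_{-2,i}(0)<0$ yields
\begin{equation}\nonumber
0=g(v_c(0))+C\,\omega_{-2,i}(0)\,(1-e^{-\nu t_c})>g(v_c(0))+C\,\omega_{-2,i}(0),
\end{equation}
i.e.\ $C\,\omega_{-2,i}(0)<-g(v_c(0))$. Dividing by $C$ and minding its sign finishes the proof: if $0<v_c(0)<1$ then $C<0$, the inequality reverses, and $\omega_{-2,i}(0)>-g(v_c(0))/C=-\tfrac{\nu}{2}f_0(v_c(0))$, which is \e{vcm1}; if $v_c(0)>1$ then $C>0$, the inequality is preserved, and $\omega_{-2,i}(0)<-g(v_c(0))/C=-\tfrac{\nu}{2}f_0(v_c(0))$, which is \e{vcp1}. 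The value $v_c(0)=1$ is degenerate (the ODE breaks down there) and supports no type A blowup, and when $\nu=0$ the two bounds collapse to the sign conditions $\omega_{-2,i}(0)>0$ and $\omega_{-2,i}(0)<0$, which already follow from $C\,\omega_{-2,i}(0)<0$.

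The only step with genuine content, as opposed to bookkeeping, is the passage to the \emph{strict} inequality above: it rests entirely on the blowup time $t_c$ being finite, so that $e^{-\nu t_c}$ is bounded away from $0$. Everything else is careful sign-tracking of $C$ — negative when $v_c(0)<1$, positive when $v_c(0)>1$ — which is precisely what splits the conclusion into the two alternatives \e{vcm1} and \e{vcp1}, together with the elementary identity relating $g$ and $f_0$.
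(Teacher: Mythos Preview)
Your proof is correct and follows essentially the same approach as the paper: evaluate the implicit solution \e{a0.5sigma0_periodic_solution2} at the type~A blowup time using $g(0)=0$, and extract the constraint on $\omega_{-2,i}(0)$ from the finiteness of $t_c$ via $0<1-e^{-\nu t_c}<1$. The only cosmetic difference is that you first obtain the sign condition $C\,\omega_{-2,i}(0)<0$ from the monotonicity argument for $v_c$ in \e{a0.5sigma0_periodic_eqns_Vc}, whereas the paper reads both the sign condition and the quantitative bound directly off the double inequality $-1<e^{-\nu t_c}-1<0$ applied to their equation \e{tcsol1}; the two routes are equivalent.
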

\begin{proof}
 If we assume that the type A blowup occurs, then there exists  $t=t_c$ with  $0<t_c<\infty$,  such that $v_c(t_c)=0.$ Plugging these conditions into \e{a0.5sigma0_periodic_solution2} and solving for $e^{-\nu t_c}$  results in
\begin{equation}\label{tcsol1}
e^{-\nu t_c}-1=\frac{\nu}{2\omega_{-2,i}(0)}f_0(v_c(0)),
\end{equation}
%
%
where $f_0$ is defined in \e{f-firstTime}. The blowup  
condition $0<t_c<\infty$  implies that the left-hand side of \e{tcsol1} must be between $-1$ and zero because $\nu>0,$ i.e.,
\begin{equation}\label{negativecondition}
-1<\frac{\nu}{2\omega_{-2,i}(0)}f_0(v_c(0))<0.
\end{equation}
%
%
%

Thus, to look for a solution which blows up, one must first satisfy the second inequality of \e{negativecondition}    by choosing the opposite sign of $\omega_{-2,i}(0)$ compared with the sign of $f_0(v_c(0))$  to ensure that the right-hand side of \e{tcsol1} is negative.
Such a choice of sign is always possible   because the solution ansatz \e{omegaa0.5sigma0_periodic_lower}
allows any sign of  $\omega_{-2,i}(0)$ (while restricting to $v_c(0)>0$). The first inequality of  \e{negativecondition}  must be also satisfied for blowup.

%
Assume that $0<v_c(0)<1. $ Then  $\omega_{-2,i}(0)>0$ by the condition \e{negativecondition} together with  \e{fproperties}. It implies the solution lies in Sector (1) given by
 \e{condition1}.  The condition \e{negativecondition}   together with \e{fproperties}-\e{f0mindepth}  gives the necessary conditions for
the type A blowup,  specified in \eqref{vcm1}.
If instead $v_c(0)>1,$ then the condition \e{negativecondition} together with  \e{fproperties} results in    $\omega_{-2,i}(0)<0.$   This implies the solution is in Sector (4)
given by \e{condition4}. The condition \e{negativecondition}   together with \e{fproperties}-\e{f0mindepth}   gives the necessary conditions for
the type A blowup, specified  in \eqref{vcp1}.
\end{proof}

The yellow area in Fig. \ref{fig:figblowup1p2sigma0} shows the region determined by \e{vcm1}.
\begin{figure}[h]
\centering
\includegraphics[width=6.in]{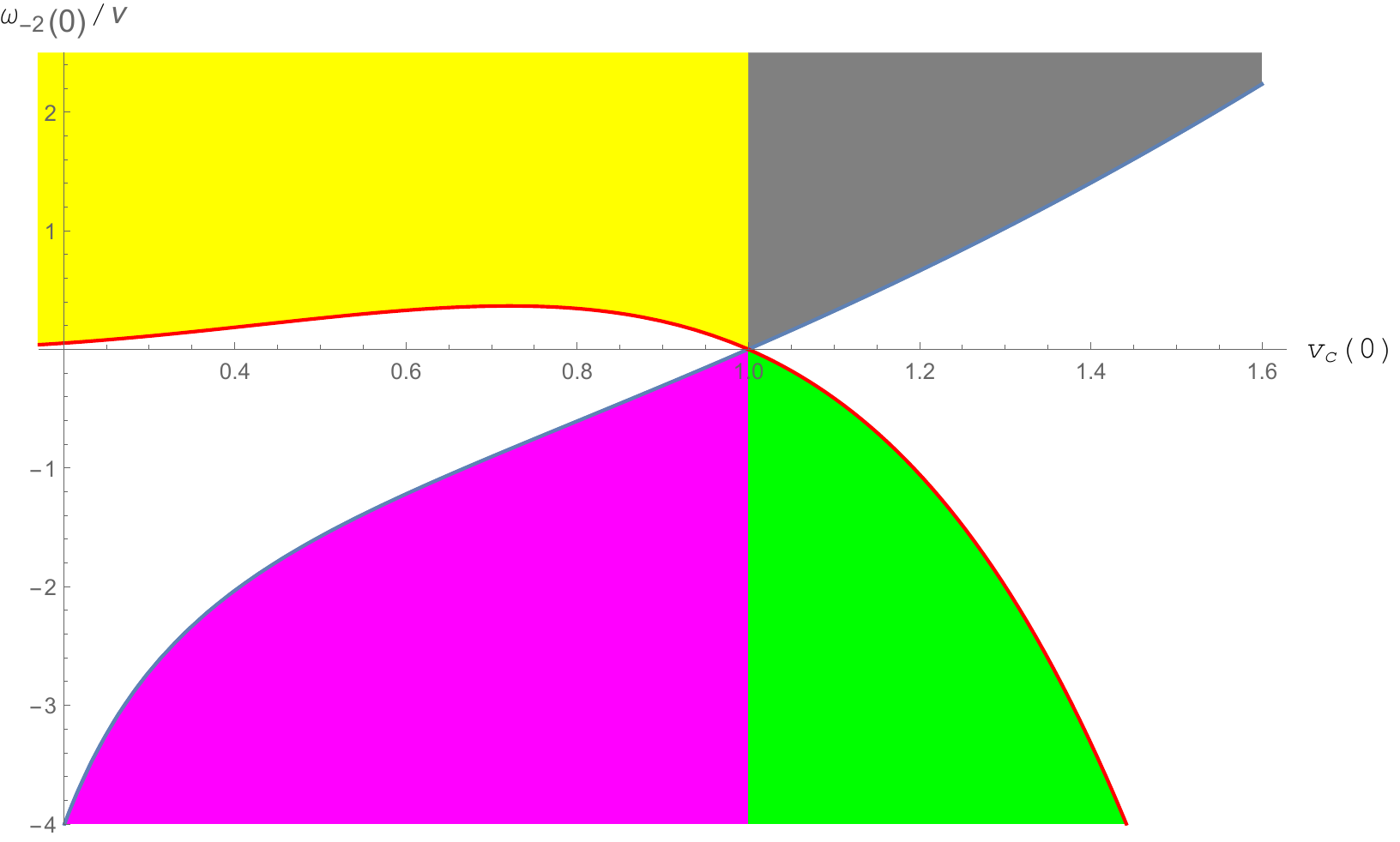}
\caption{A summary  of
behavior for the implicit solution (\ref{a0.5sigma0_periodic_solution2})
on the half-plane of $v_c(0)>0$ vs. $\omega_{-2,i}(0)/\nu$.  Colored areas designate blowup regions and non-colored regions correspond to global existence of solutions. In particular, the yellow region corresponds to  \e{vcm1},  the green region to  \e{vcp1}, the magenta region to  \e{vcm1infty0} and the gray region to    \e{vcp1infty0}. Blue and red lines separate these regions and correspond to infinite time blowup. }
\label{fig:figblowup1p2sigma0}
\end{figure}
 We define
 \begin{equation}\nonumber
 \omega_{-2,i,min,0}(v_c(0)):=-\frac{\nu}{2}f_0(v_c(0)), \ \cblue{ 0 < v_c(0) < 1},
\end{equation}
and note the largest value of $\omega_{-2,min}(v_c(0))$ is achieved at $v_{0,min}=0.7211367\ldots $ so that %
\begin{equation}\nonumber
\max\limits_{0<v_c(0)<1}\omega_{-2,i,min,0}(v_c(0))=-\frac{\nu}{2}f_0(v_{0,min})=\nu \cdot \,0.36520299\ldots
\end{equation}
as follows from \e{vc0min} and \e{f0mindepth}.  In the case $v_{c}(0)>1,$ we may similarly define
\begin{equation}\nonumber
\omega_{-2,i,max,0}(v_c(0)):=-\frac{\nu}{2}f_0(v_c(0))<0,
\end{equation}
and  plot the region determined by \e{vcp1}, i.e., $\omega_{-2,i}(0)<\omega_{-2,i,max,0}(v_c(0))<0.$  We do so with the green area in Fig. \ref{fig:figblowup1p2sigma0}.

\cblue{Considering the alternative conditions
compared with} Lemma \ref{typeANecessary}, if we instead have either
\begin{equation}\label{vcm1global}
  \cblue{ \omega_{-2,i}(0)\le\omega_{-2,i,min,0}(v_c(0)),\ 0<v_c(0)<1,}
\end{equation}
or
\begin{equation}\label{vcp1global}
\cblue{ \omega_{-2,i}(0) \geq \omega_{-2,i,max,0}(v_c(0))\quad \mathrm{and}\quad  v_c(0)>1,}
\end{equation}
then type A blowup does not occur  since there exists no finite $t_c$ satisfying (\ref{tcsol1}).
 %
%
%

The above results can be understood as follows.
\cblue{In the special case where  either $\omega_{-2,i}(0) > 0,$ $0<v_c(0)<1$  or  $\omega_{-2,i}(0) < 0, \ v_c(0)>1$,
we have that  $\frac{dv_c}{dt}<0$, which implies that  $v_c(t)$ decreases monotonically.}
If  \e{vcm1global} and \e{vcp1global} are additionally satisfied, the dissipative decay factor $e^{-\nu t}$ on the right-hand side  of \e{a0.5sigma0_periodic_eqns_Vc}  dominates over the growth factor $1/v_c^2(t)$  and prevents the solution $v_c(t)$ from reaching the origin $v_c=0$ in finite time,  thus ensuring  global existence of the solution  \e{omegaa0.5sigma0_periodic_lower}.   In the complementary cases  \e{vcm1} and \e{vcp1}, the growth factor $1/v_c^2(t)$ instead dominates the decay factor $e^{-\nu t}$, so that  the speed $\frac{dv_c}{dt}\to -\infty$ as $v_c(t)\to 0$.
That  speed guarantees that  collapse occurs, i.e., $v_c(t) \rightarrow 0$ in finite time with $t_c$ given by  \e{tcsol1}.
 A Taylor series expansion of  \e{a0.5sigma0_periodic_solution2} immediately gives that  %
\begin{equation} \label{vczerolimit}
v_c(t)\propto(t_c-t)^{1/3}+O((t_c-t)^{4/3})
\end{equation}
quantifying this ``fall into the origin'' at infinite speed.

The boundary cases are
\begin{equation}\label{vcm1boundary}
 0<v_{c}(0)<1 \quad \mathrm{and}\quad
\omega_{-2,i}(0)=-\frac{\nu}{2}f_{0}(v_{c}(0)),
\end{equation}
or
\begin{equation}\label{vcp1boundary}
v_c(0)>1 \quad \mathrm{and}\quad \omega_{-2,i}(0)=-\frac{\nu}{2}f_0(v_c(0)).
\end{equation}In both these cases  we expand  \e{a0.5sigma0_periodic_solution2} into Taylor series  at $v_c(t)=0$ and solve the resulting equation for $v_c(t)$ using either \e{vcm1boundary} or \e{vcp1boundary} which gives at the leading order for $t\to +\infty$  that
\begin{equation}\label{vccriticalA}
v_c(t)=\frac{3^{1/3}}{2}e^{-\nu t/3}\left (\arctan[v_c(0)]+v_c(0)\frac{v_c(0)^2-1}{[1+v_c(0)^2]^2} \right )^{1/3}+O(e^{-\nu t})
\end{equation}
which holds both for $0<v_c(0)<1$\ and $v_c(0)>1.$ Here the principal root of the power $1/3$ is chosen which  ensures that $v_c(t)>0$  because the corresponding radicand $f_a:=\arctan[v_c(0)]+v_c(0)\frac{v_c(0)^2-1}{[1+v_c(0)^2]^2}> 0$ for $v_c(0)>0.$ The positivity of $f_a$ follows  by calculating a derivative   $\frac{d f_a}{d v_c(0)}=\frac{8v_c(0)^2}{[1+v_c(0)^2]^3}$   which is positive for $v_c(0)>0$ together with $f_a=0$ at $v_c(0)=0$.

Thus solutions both  for \e{vcm1boundary} and \e{vcp1boundary} exist globally for $t>0$ while $v_c(0)=0$ is reached at infinite time $t=+\infty$.  We therefore  refer to the boundary cases as {\it infinite time blowup.}

\subsubsection{
 Necessary conditions for  the type B blowup}

As in the case of type A blowup, before proving a necessary condition for type B blowup we need to first study
an auxiliary function.
We consider the function $f_{\infty},$ defined by
\begin{equation}\label{fdefsigninfty}
f_\infty(x):=(x^{2} - 1)^2+\frac{(x^{2}+1)^{2}}{x}(x^{2} - 1)(\arctan(x)-\pi/2), \ x>0.
\end{equation}
A plot of $f_\infty$ is shown in Fig. \ref{fig:finfplot}. 
%
\begin{figure}[h]
\centering
\includegraphics[width=4in]{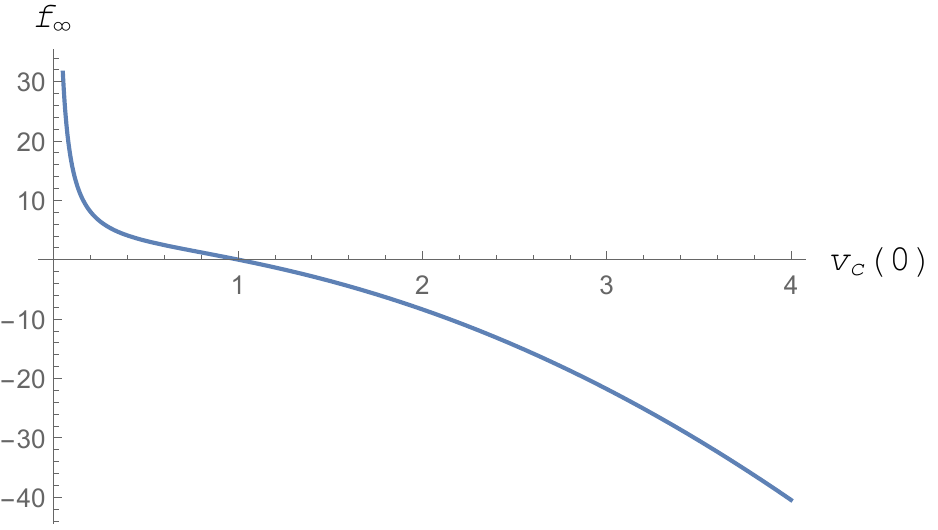}
\caption{A plot of $f_\infty(v_c(0))$ from \e{fdefsigninfty} with $x=v_c(0)$. } \label{fig:finfplot}
\end{figure}

\begin{lemma}\label{finfinityproperties} The auxiliary function
 $f_{\infty}(x)=0$ only for $x=1,$
$f_{\infty}(x)>0$ for $x\in(0,1),$ and $f_{\infty}(x)<0$ for all $x>1$.
\end{lemma}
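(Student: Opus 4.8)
The plan is to follow the same three-case split as in Lemma~\ref{f0properties} (namely $x\in(0,1)$, $x=1$, and $x>1$), and in each range reduce the claim to an elementary sign comparison, borrowing the key inequality that was already established inside the proof of Lemma~\ref{f0properties}.

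First I would dispose of the easy ranges. For $x\in(0,1)$ the claim $f_\infty(x)>0$ is immediate: $(x^2-1)^2>0$, while in the second term the two factors $x^2-1$ and $\arctan(x)-\pi/2$ are both strictly negative (the latter because $\arctan(x)<\pi/2$ for every real $x$), so their product, and hence the whole second term $\frac{(x^2+1)^2}{x}(x^2-1)(\arctan(x)-\pi/2)$, is strictly positive. At $x=1$ one simply observes $x^2-1=0$, so both terms vanish and $f_\infty(1)=0$.

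The substantive case is $x>1$, where the goal is $f_\infty(x)<0$. Since $x^2-1>0$, moving the second term across and dividing by $x^2-1$ shows that $f_\infty(x)<0$ is equivalent to $\frac{(x^2-1)x}{(x^2+1)^2}<\frac{\pi}{2}-\arctan(x)$, and by the co-function identity $\frac{\pi}{2}-\arctan(x)=\arctan(1/x)$ (valid for $x>0$) this becomes
\[
\frac{(x^2-1)x}{(x^2+1)^2}<\arctan\!\left(\frac1x\right).
\]
Now I would substitute $y=1/x\in(0,1)$; a short computation turns the left-hand side into $\dfrac{(1-y^2)y}{(1+y^2)^2}$, so the inequality to prove is exactly
\[
\frac{(1-y^2)y}{(1+y^2)^2}<\arctan(y),\qquad y\in(0,1),
\]
which is precisely the special case of \eqref{shownForAllPositiveX}, already shown to hold for all $y>0$ in the proof of Lemma~\ref{f0properties}. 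Hence $f_\infty(x)<0$ for every $x>1$.

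Combining the three ranges gives $f_\infty>0$ on $(0,1)$, $f_\infty(1)=0$, and $f_\infty<0$ on $(1,\infty)$, so $x=1$ is the only zero of $f_\infty$ on $x>0$, which is exactly the statement. The only step that requires a moment's thought is recognizing that the $x>1$ case collapses, under $y=1/x$ together with the $\arctan$ co-function identity, onto the inequality from Lemma~\ref{f0properties}; after that observation everything is a sign count, and in particular no fresh differentiation argument of the type used inside Lemma~\ref{f0properties} is needed here.
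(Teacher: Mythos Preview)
Your proof is correct and takes a genuinely different route from the paper's argument. The paper factors $f_\infty(x)=(x^2-1)g(x)$ and proves $g(x)<0$ for all $x>0$ by comparing $h_1(x)=\frac{(x^2-1)x}{(x^2+1)^2}$ with $h_2(x)=\frac{\pi}{2}-\arctan(x)$: it observes $h_1(0)<h_2(0)$, shows $h_1'(x)>h_2'(x)$ for all $x>0$ (so the graphs can cross at most once), and then rules out any crossing by a Laurent-series estimate for large $x$. Your approach instead disposes of $x\in(0,1)$ by a pure sign count on the two summands, and for $x>1$ reduces the needed inequality via $y=1/x$ and the co-function identity $\frac{\pi}{2}-\arctan(x)=\arctan(1/x)$ to precisely \eqref{shownForAllPositiveX}, already established in Lemma~\ref{f0properties}. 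This is shorter, avoids the Laurent-series work entirely, and has the pleasant side effect of making transparent the $x\leftrightarrow 1/x$ relationship between the $f_0$ and $f_\infty$ lemmas; the paper's version, by contrast, is self-contained and does not rely on importing \eqref{shownForAllPositiveX} from the previous proof.
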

\begin{proof}
We factor out $x^{2}-1,$ and write $f_{\infty}(x)=(x^{2}-1)g(x).$  We then wish to show that $g(x)<0$ for all $x>0.$
This means that we wish to show
\begin{equation}\nonumber
x^{2}-1 < \frac{(x^{2}+1)^{2}}{x}\left(\frac{\pi}{2}-\arctan(x)\right),
\end{equation}
for all $x>0.$  This is the same as showing
\begin{equation}\nonumber
\frac{(x^{2}-1)x}{(x^{2}+1)^{2}}<\frac{\pi}{2}-\arctan(x),
\end{equation}
for all $x>0.$  We call the function on the left-hand side $h_{1}(x),$ and we call the function on the right-hand side $h_{2}(x).$
The derivatives of $h_{1}$ and $h_{2}$ are
\begin{equation}\nonumber
h_{1}'(x)= -\frac{x^{4}-6x^{2}+1}{(x^{2}+1)^{3}},\qquad h_{2}'(x)=-\frac{1}{x^{2}+1}=-\frac{x^{4}+2x^{2}+1}{(x^{2}+1)^{3}}.
\end{equation}
Notice that $h_{1}'(x)>h_{2}'(x),$ for all $x>0.$  Furthermore, $h_{1}(0)=0$ and $h_{2}(0)=\frac{\pi}{2}.$
So, we see that $h_{2}$ begins larger than $h_{1}$ but $h_{1}$ always grows faster than $h_{2}.$  This implies that if the graphs
of $h_{1}$ and $h_{2}$ ever cross, they may only cross one time.  To determine whether this ever happens, it is enough to determine which
function is larger for large $x$ (i.e.,  as $x$ goes to infinity).  We will use the Laurent series of $h_{1}$ and $h_{2}$ for this purpose.
We may express $h_{1}$ and $h_{2}$ with their Laurent series as
\begin{equation}\nonumber
h_{1}(x)=\sum_{k=1}^{\infty}\frac{(-1)^{k+1}(2k-1)}{x^{2k-1}}=\frac{1}{x}-\frac{3}{x^{3}}+\frac{5}{x^{5}}-\cdots,
\end{equation}
\begin{equation}\nonumber
h_{2}(x)=\sum_{k=1}^{\infty}\frac{(-1)^{k+1}}{(2k-1)x^{2k-1}}=\frac{1}{x}-\frac{1}{3x^{3}}+\frac{1}{5x^{5}}-\cdots.
\end{equation}
Notice that these are both alternating series, and notice that each are valid for all $x$ satisfying $|x|>1.$  Furthermore, the absolute value of the
terms in each series goes monotonically to zero; this is completely obvious for the series for $h_{2},$ but also holds for the series for
$h_{1}$ for sufficiently large $x$ (in fact, $x>2$ is sufficient).
We are interested in approximating each of $h_{1}$ and $h_{2}$ using the first two terms of their respective Laurent series.  We define the
errors from this approximation to be
\begin{equation}\nonumber
E_{1}(x)=h_{1}(x)-\frac{1}{x}+\frac{3}{x^{3}},\qquad
E_{2}(x)=h_{2}(x)-\frac{1}{x}+\frac{1}{3x^{3}}.
\end{equation}
We recall the error estimate for alternating series, in which the error can be bounded by the absolute value of the next term in the series.
Thus, we have the bounds
\begin{equation}\nonumber
|E_{1}(x)|\leq\frac{5}{x^{5}},\qquad |E_{2}(x)|\leq\frac{1}{5x^{5}}.
\end{equation}
We then may conclude
\begin{multline}\nonumber
h_{2}(x)-h_{1}(x)=\left(\frac{1}{x}-\frac{1}{3x^{3}}\right)+E_{1}(x)-\left(\frac{1}{x}-\frac{3}{x^{3}}\right) + E_{2}
\\
\geq\frac{8}{3x^{3}}-|E_{1}(x)|-|E_{2}(x)|\geq\frac{8}{3x^{3}}-\frac{5}{x^{5}}-\frac{1}{5x^{5}}>0,
\end{multline}
for all sufficiently large $x.$

We have shown that $g(x)<0$ for all $x>0.$ Therefore $f_{\infty}(x)=0$ only for $x=1,$ and
$f_{\infty}(x)>0$ for $x\in(0,1),$ and $f_{\infty}(x)<0$ for all $x>1.$
\end{proof}

\begin{lemma}\label{typeBNecessary}
If type B blowup occurs, then it must either be the case that
\begin{equation}\label{vcm1infty0}
0<v_c(0)<1,\quad\mathrm{and}\quad \omega_{-2,i}(0)<-\frac{\nu}{2}f_\infty(v_c(0)),
\end{equation}
or
\begin{equation}\label{vcp1infty0}
v_{c}(0)>1,\quad\mathrm{and}\quad \omega_{-2,i}(0)>-\frac{\nu}{2}f_\infty(v_c(0)).
\end{equation}
\end{lemma}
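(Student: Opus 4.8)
The plan is to mirror the proof of Lemma~\ref{typeANecessary}, with the limit $v_c(t)\to 0^+$ replaced by $v_c(t)\to+\infty$ and the auxiliary function $f_0$ replaced by $f_\infty$ from \e{fdefsigninfty}. First I would assume type B blowup occurs: there is a finite $t_c>0$ with $v_c(t)\to+\infty$ as $t\to t_c^-$. Before passing to the limit in the implicit solution \e{a0.5sigma0_periodic_solution2}, I would note that \e{a0.5sigma0_periodic_solution2} was obtained by integrating the \emph{closed} scalar ODE \e{a0.5sigma0_periodic_eqns_Vc} for $v_c$, whose right-hand side is smooth for all $v_c>0$; its only singularity is at $v_c=0$, which type B blowup never reaches. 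In particular a type B trajectory originating in Sector~(3) crosses $v_c=1$ regularly, with nonzero speed, so \e{a0.5sigma0_periodic_solution2} remains valid on the whole existence interval and letting $t\to t_c^-$ is legitimate. Taking this limit, the right-hand side of \e{a0.5sigma0_periodic_solution2} depends on $t$ only through $e^{-\nu t}$ and hence converges to its value at $t=t_c$, while on the left-hand side $\frac{v_c(v_c^2-1)}{(v_c^2+1)^2}\to 0$ and $\arctan v_c\to\pi/2$, so the left-hand side tends to $\pi/2$.

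Next I would solve the limiting equation $\pi/2=(\text{RHS of \e{a0.5sigma0_periodic_solution2} at }t_c)$ for $e^{-\nu t_c}$. Multiplying through by $\frac{\nu(v_c^2(0)-1)(v_c^2(0)+1)^2}{2\omega_{-2,i}(0)\,v_c(0)}$ and recognizing that the remaining combination of $v_c(0)$ and $\arctan v_c(0)-\pi/2$ is exactly $-f_\infty(v_c(0))$, this yields the identity
\begin{equation}\nonumber
e^{-\nu t_c}-1=\frac{\nu}{2\omega_{-2,i}(0)}\,f_\infty(v_c(0)),
\end{equation}
the exact analogue of \e{tcsol1}. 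The requirement $0<t_c<\infty$ then forces $-1<e^{-\nu t_c}-1<0$, i.e.
\begin{equation}\nonumber
-1<\frac{\nu}{2\omega_{-2,i}(0)}\,f_\infty(v_c(0))<0 .
\end{equation}

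Finally I would read off the two cases using Lemma~\ref{finfinityproperties}. The strict inequality $\frac{\nu}{2\omega_{-2,i}(0)}f_\infty(v_c(0))<0$ forces $\omega_{-2,i}(0)$ and $f_\infty(v_c(0))$ to have opposite signs; since $f_\infty(1)=0$ this also rules out $v_c(0)=1$, leaving the dichotomy $0<v_c(0)<1$ or $v_c(0)>1$. If $0<v_c(0)<1$ then $f_\infty(v_c(0))>0$, hence $\omega_{-2,i}(0)<0$ (the solution lies in Sector~(3)), and multiplying the left inequality by the negative quantity $2\omega_{-2,i}(0)$ gives $\omega_{-2,i}(0)<-\frac{\nu}{2}f_\infty(v_c(0))$, which is \e{vcm1infty0}. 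If instead $v_c(0)>1$ then $f_\infty(v_c(0))<0$, hence $\omega_{-2,i}(0)>0$ (Sector~(2)), and multiplying by the positive quantity $2\omega_{-2,i}(0)$ gives $\omega_{-2,i}(0)>-\frac{\nu}{2}f_\infty(v_c(0))$, which is \e{vcp1infty0}. This exhausts the cases and completes the proof.

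The main obstacle, such as it is, is not the algebra — which is identical to that in Lemma~\ref{typeANecessary} up to the replacement of $0$ by $\pi/2$ — but making the limiting step airtight: one must check that \e{a0.5sigma0_periodic_solution2} still holds right up to $t_c$ even when the trajectory passes through $v_c=1$, where the parametrization \e{a0.5sigma0_periodic_solution1} momentarily degenerates (and the original ODE system \e{a0.5sigma0_periodic_eqns} carries a $1-v_c^2$ in a denominator). The resolution is that these are removable artifacts of the parametrization: the decoupled equation \e{a0.5sigma0_periodic_eqns_Vc} for $v_c$ is regular there, so the integrated relation \e{a0.5sigma0_periodic_solution2} — which is all the argument uses — is valid throughout.
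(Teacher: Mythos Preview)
Your proof is correct and follows essentially the same approach as the paper: pass to the limit $v_c(t)\to+\infty$ in the implicit solution \e{a0.5sigma0_periodic_solution2} to obtain $e^{-\nu t_c}-1=\frac{\nu}{2\omega_{-2,i}(0)}f_\infty(v_c(0))$, then use $-1<e^{-\nu t_c}-1<0$ together with Lemma~\ref{finfinityproperties} to split into the two cases. Your added discussion of why the implicit relation remains valid across $v_c=1$ (via the regularity of \e{a0.5sigma0_periodic_eqns_Vc}) is a point the paper's proof does not make explicit, though the paper addresses the Sector~(3)$\to$Sector~(2) crossing earlier in the text.
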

\begin{proof}

If we assume that the type B blowup occurs, then there exists  $t=t_c$ with   $0<t_c<\infty$,  such that   $v_c(t_c)=+\infty$, i.e., $v_c(t)\to+\infty$ as $t\to t_c^-$. That limit implies that $\arctan[v_c(t_c)]=\pi/2$. Plugging in these conditions to  \e{a0.5sigma0_periodic_solution2} and solving for $e^{-\nu t_c}$  results in

\begin{equation}\label{tcsol1infty}
e^{-\nu t_c}-1=\frac{\nu}{2\omega_{-2,i}(0)}f_\infty(v_c(0)).
\end{equation}
%

A   
blowup condition  $0<t_c<\infty$  implies that the left-hand side of \e{tcsol1infty} must be between $-1$ and zero because $\nu>0,$ i.e.,
\begin{equation}\label{negativeconditioninfty}
-1<\frac{\nu}{2\omega_{-2,i}(0)}f_\infty(v_c(0))<0.
\end{equation}
%
%
%

Thus, to look for a solution which blows up, one must first satisfy the second inequality of  \e{negativeconditioninfty}    by choosing the opposite sign of $\omega_{-2,i}(0)$ compared with the sign of $f_\infty(v_c(0)).$  Such a  sign choice is always possible   because the solution ansatz \e{omegaa0.5sigma0_periodic_lower}
allows any sign of  $\omega_{-2,i}(0)$ (while restricting to $v_c(0)>0$). The first inequality of  \e{negativeconditioninfty} must be also satisfied for blowup.
%

Assume that $0<v_c(0)<1. $ Then  $\omega_{-2,i}(0)<0$ by the properties we have established for $f_{\infty}$ in Lemma \ref{finfinityproperties}.
The necessary conditions \e{vcm1infty0} for
the type B blowup can then be stated as follows:  %
\begin{equation}\label{vcm1infty}
\omega_{-2,i}(0)<\omega_{-2,i,max,\infty}(v_c(0)):=-\frac{\nu}{2}f_\infty(v_c(0))<0, \ 0<v_c(0)<1.
\end{equation}
The magenta area in Fig. \ref{fig:figblowup1p2sigma0} shows the region determined by   \e{vcm1infty}.

Next assume that $v_c(0)>1. $ Then $\omega_{-2,i}(0)>0$ by the condition \e{negativeconditioninfty} together with  the properties we have established for $f_{\infty}$  in Lemma \ref{finfinityproperties}.
The necessary conditions \e{vcp1infty0} for the type B blowup in this case are then%
\begin{equation}\label{vcp1infty}
\omega_{-2,i}(0)>\omega_{-2,i,min,\infty}(v_c(0)):=-\frac{\nu}{2}f_\infty(v_c(0))>0, \ v_c(0)>1.
\end{equation}
The gray area in Fig. \ref{fig:figblowup1p2sigma0} shows the region determined by   \e{vcp1infty}.
\end{proof}

\cblue{Considering the alternative conditions
compared with} Lemma \ref{typeBNecessary}, we see that if either
\begin{equation}\label{vcm1globalinfty}
\cblue{\omega_{-2,i,max,\infty}(v_c(0)) \leq  \omega_{-2,i}(0), \ 0<v_c(0)<1,}
\end{equation}
or
\begin{equation}\label{vcplusglovinfty}
\cblue{ \omega_{-2,i}(0)\le\omega_{-2,i,min,\infty}(v_c(0)), \ v_c(0)>1,}
\end{equation}
then type B blowup does not occur, since (\ref{tcsol1infty}) is not satisfied for any finite $t_c$.

%

 A qualitative understanding of the above results
 is obtained  from  \e{a0.5sigma0_periodic_eqns_Vc}.  \cblue{In the special case where either  $\omega_{-2,i}(0)<0,$ $0<v_c(0)<1$  or  $\omega_{-2,i}(0)>0, \ v_c(0)>1,$
 we have that $\frac{dv_c}{dt}>0$, which implies that $v_c(t)$ increases monotonically.}
 If \e{vcm1globalinfty} and \e{vcplusglovinfty} are additionally satisfied, the dissipative decay factor $e^{-\nu t}$ on the right-hand side of \e{a0.5sigma0_periodic_eqns_Vc} dominates over the growth factor $v^4_c(t)$ and  prevents  $v_c(t)$ from reaching  $v_c=+\infty$ in finite time, thus ensuring a global existence of the solution  \e{omegaa0.5sigma0_periodic_lower}. For the complementary cases \e{vcm1infty0} and \e{vcp1infty0},  $v_c^4(t)$ instead dominates over  $e^{-\nu t}$ thus $v_c(t) \rightarrow +\infty$  with a diverging speed at a finite value of $t_c>0.$  To quantify this,  we expand   \e{a0.5sigma0_periodic_solution2} in powers of $v_c(t)\to \infty$ giving  that %
\begin{equation} \label{vczerolimitinfty}
v_c(t)\propto(t_c-t)^{-1/3}+O((t_c-t)^{2/3}).
\end{equation}
This shows $v_c(t)$  ``reaches infinity'' in finite time $t_c$, with infinite speed.

The boundary cases are
\begin{equation}\label{vcm1infty0boundary}
0<v_c(0)<1,\quad\mathrm{and}\quad \omega_{-2,i}(0)=-\frac{\nu}{2}f_\infty(v_c(0)),
\end{equation}
and
\begin{equation}\label{vcp1infty0boundary}
v_{c}(0)>1,\quad\mathrm{and}\quad \omega_{-2,i}(0)=-\frac{\nu}{2}f_\infty(v_c(0)).
\end{equation}
In both these cases  we expand  \e{a0.5sigma0_periodic_solution2} in powers of $v_c(t)\to \infty$   and solve the resulting equation for $v_c(t)$ using either \e{vcm1infty0boundary} or \e{vcp1infty0boundary} which gives at the leading order for $t\to +\infty$  that
\begin{equation}\label{vccriticalB}
v_c(t)={3^{-1/3}2}e^{\nu t/3}\left (\frac{\pi}{2}-\arctan[v_c(0)]-v_c(0)\frac{v_c(0)^2-1}{[1+v_c(0)^2]^2} \right )^{-1/3}+O(e^{-\nu t/3})
\end{equation}
both for $0<v_c(0)<1$\ and $v_c(0)>1.$ Here the principal root of the power $1/3$ is chosen which  ensures that $v_c(t)>0$  because the corresponding radicand $f_b:=\frac{\pi}{2}-\arctan[v_c(0)]-v_c(0)\frac{v_c(0)^2-1}{[1+v_c(0)^2]^2}> 0$ for $v_c(0)>0.$
The positivity of $f_b$ is ensured  by calculating a derivative   $\frac{d f_b}{d v_c(0)}=-\frac{8v_c(0)^2}{[1+v_c(0)^2]^3}$   which is negative for $v_c(0)>0$ together with $f_b=\pi/2$ at $v_c(0)=0$ and $f_b=0$ at $v_c(0)=+\infty$.

Thus solutions  for data satisfying  \e{vcm1infty0boundary} or \e{vcp1infty0boundary} exist globally for $t>0$ while $v_c(0)\to+\infty$ as $t\to+\infty$, so these boundary cases exhibit {\it infinite time blowup.}
Fig. \ref{fig:figblowup1p2sigma0} summarizes  the behavior of the implicit solution \e{a0.5sigma0_periodic_solution2}
on the half-plane of $v_c(0)>0$ vs. $\omega_{-2,i}(0)/\nu$.

\subsubsection{Sufficient conditions for blowup} \label{sec:sufficient}

We now prove that \e{vcm1}, \e{vcp1},  \e{vcm1infty0} and \e{vcp1infty0}
provide not only necessary but also sufficient conditions for blowup.
We define auxiliary functions

\begin{equation}\label{f1def}
f_1(v_c(t)):=\frac{v_c(t)(v_c^2(t) - 1)}{(v_c^2(t)+1)^2} + \arctan[v_c(t)]
\end{equation}
representing the left-hand side of \e{a0.5sigma0_periodic_solution2} as well as
\begin{equation}\label{f2def}
f_2(v_c(0)):=\frac{v_c(0)}{(v_c^2(0) - 1)(v_c^2(0)+1)^2}.
\end{equation}
%
%

Differentiating  \e{f1def} with respect to $v_c(t)$  gives that
\begin{equation}\label{f1der}
\frac{d f_1(v_c(t))}{d v_c(t)}:=\frac{8v_c^2(t)}{(v_c^2(t)+1)^3}>0 \quad \text{for}\quad v_c(t)\ne 0,
\end{equation}
which ensures that $f_1(v_c(t))$ grows monotonically with $v_c(t).$ Because $f_1(0)=0$, we conclude that %
\begin{equation}\label{f1positivity}
f_1(v_c(t))> 0  \quad \text{for}\quad v_c(t)> 0 \ \text{while} \ f_1(0)=0.
\end{equation}
Using  the definitions of $f_{0},$ $f_{1},$ and $f_{2}$ (which are \e{f-firstTime}, \e{f1def}, and \e{f2def}, respectively) we can represent
\e{a0.5sigma0_periodic_solution2}
in the following compact form:
\begin{align}\label{a0.5sigma0_periodic_solution2compact} 
&f_1(v_c(t))=f_0(v_c(0))f_2(v_c(0))+  \frac{2\omega_{-2,i}(0)}{\nu } f_2(v_c(0))(1-e^{- \nu t}).
\end{align}


\begin{lemma}\label{lemmasufficient0} Either of  \e{vcm1} or \e{vcp1} is a sufficient  condition for  type A blowup.
\end{lemma}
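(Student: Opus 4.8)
The plan is to reduce the statement to an elementary analysis of one explicit scalar function of $t$ via the compact form \e{a0.5sigma0_periodic_solution2compact}, and then to invert $f_1$. Set
\[
\phi(t):=f_0(v_c(0))f_2(v_c(0))+\frac{2\omega_{-2,i}(0)}{\nu}\,f_2(v_c(0))\bigl(1-e^{-\nu t}\bigr),
\]
which is the right-hand side of \e{a0.5sigma0_periodic_solution2compact}, so that $f_1(v_c(t))=\phi(t)$ for every $t$ at which the solution $v_c(t)$ exists and stays positive. Using the definitions \e{f-firstTime}, \e{f1def}, \e{f2def} one checks directly the identity $\phi(0)=f_0(v_c(0))f_2(v_c(0))=f_1(v_c(0))$, so $\phi$ starts at the correct value. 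The goal is then to show that, under \e{vcm1} or \e{vcp1}, the identity $f_1(v_c(t))=\phi(t)$ forces $v_c(t)\to 0$ in finite time.

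Next I would carry out the sign bookkeeping for $\phi$. By \e{f2def}, $f_2(v_c(0))<0$ for $0<v_c(0)<1$ and $f_2(v_c(0))>0$ for $v_c(0)>1$, and by Lemma~\ref{f0properties} the sign of $f_0(v_c(0))$ is the same in each case. In case \e{vcm1} (hence $0<v_c(0)<1$ and, since $f_0(v_c(0))<0$, necessarily $\omega_{-2,i}(0)>-\tfrac{\nu}{2}f_0(v_c(0))>0$) the coefficient $\tfrac{2}{\nu}\omega_{-2,i}(0)f_2(v_c(0))$ of $1-e^{-\nu t}$ is strictly negative, so $\phi$ is strictly decreasing on $[0,\infty)$; moreover $\phi(0)=f_1(v_c(0))>0$ by \e{f1positivity}, while
\[
\lim_{t\to\infty}\phi(t)=f_2(v_c(0))\Bigl(f_0(v_c(0))+\tfrac{2}{\nu}\omega_{-2,i}(0)\Bigr)<0,
\]
because $f_2(v_c(0))<0$ and $f_0(v_c(0))+\tfrac{2}{\nu}\omega_{-2,i}(0)>0$ by \e{vcm1}. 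Case \e{vcp1} is identical with all signs reversed: now $f_2(v_c(0))>0$, $\omega_{-2,i}(0)<0$, the coefficient of $1-e^{-\nu t}$ is again negative, $\phi(0)>0$, and the limit as $t\to\infty$ is again strictly negative by \e{vcp1}. Thus in both cases $\phi$ is continuous and strictly decreasing from a positive value to a strictly negative limit, so it has a unique zero $t_c\in(0,\infty)$; solving $\phi(t_c)=0$ recovers exactly the formula \e{tcsol1}.

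Finally I would convert this into blowup of $v_c$. From \e{f1der}, $f_1$ is a strictly increasing $C^1$ bijection of $[0,\infty)$ onto $[0,\pi/2)$ with $f_1(0)=0$, so $f_1^{-1}$ is well-defined and continuous on $[0,\pi/2)$ (and $C^1$ on $(0,\pi/2)$). Since $\phi$ is decreasing with $\phi(0)=f_1(v_c(0))\in(0,\pi/2)$ and $\phi(t_c)=0$, we have $\phi(t)\in[0,\pi/2)$ for all $t\in[0,t_c]$, so $w(t):=f_1^{-1}(\phi(t))$ is continuous on $[0,t_c]$, strictly positive on $[0,t_c)$, with $w(0)=v_c(0)$ and $w(t_c)=0$. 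Differentiating $f_1(w(t))=\phi(t)$ on $[0,t_c)$ and using \e{f1der} shows that $w$ solves precisely the ODE \e{a0.5sigma0_periodic_eqns_Vc}; since that initial value problem has a unique solution as long as $v_c>0$ (the vector field is $C^1$ in $v_c$ away from $v_c=0$), we conclude $v_c(t)=w(t)$ on $[0,t_c)$ and $v_c(t)\to 0$ as $t\to t_c^-$, which is type A blowup at the finite time \e{tcsol1}. This proves the lemma.

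The step I expect to need the most care is the last one: matching the implicitly defined $w(t)=f_1^{-1}(\phi(t))$ with the genuine ODE solution all the way up to $t_c$, despite the $1/v_c^2$ singularity in the right-hand side of \e{a0.5sigma0_periodic_eqns_Vc} as $v_c\to 0$. The resolution is that $w$ is $C^1$ and strictly positive on the open interval $[0,t_c)$, where the vector field is smooth and uniqueness holds, so $v_c\equiv w$ there; the singularity shows up only as $\dot v_c\to-\infty$ at $t_c$ — consistent with the $v_c\propto(t_c-t)^{1/3}$ behavior of \e{vczerolimit} — and is not an obstruction to $v_c$ continuously reaching the origin at $t=t_c$.
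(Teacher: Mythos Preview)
Your proof is correct and follows essentially the same route as the paper's: both arguments use the compact form \e{a0.5sigma0_periodic_solution2compact}, check that the right-hand side $\phi(t)$ starts positive and tends to a strictly negative limit under \e{vcm1} or \e{vcp1}, and conclude that $f_1(v_c(t))$ must vanish at some finite $t_c$, forcing $v_c(t_c)=0$. The only difference is packaging: the paper argues by contradiction (assume $v_c(t)>0$ for all $t$, then the implicit relation forces $f_1(v_c(t))$ to change sign, contradiction), whereas you construct the solution directly as $v_c(t)=f_1^{-1}(\phi(t))$ and invoke ODE uniqueness to match it with the genuine trajectory on $[0,t_c)$; your extra care with the $1/v_c^2$ singularity near $t_c$ is not needed in the paper's framing since the implicit formula was already derived from the ODE and is simply taken to hold as long as the solution persists.
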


\begin{proof}

We begin by assuming \eqref{vcm1}.
Assume for the sake of contradiction that $0<v_c(t)<\infty$ for all $t>0,$ i.e., a solution never reaches the origin in finite time if the initial conditions \e{vcm1} are satisfied.
By Lemma \ref{f0properties} we see that that $f_0(v_c(0))<0.$   From the definition of $f_{2}$ we immediately have $f_2(v_c(0))<0.$
Since $f_{0}$ and $f_{2}$ evaluated at $v_{c}$ are initially negative, we see that the right-hand side of \e{a0.5sigma0_periodic_solution2compact} is initially (at $t=0$) positive.
Taking the limit as $t\rightarrow\infty$ of this right-hand side, however, yields
\begin{equation}\nonumber
\left(f_0(v_c(0))+  \frac{2\omega_{-2,i}(0)}{\nu }\right)f_2(v_c(0))<0.
\end{equation}
Here, we have used the second condition in \eqref{vcm1} to see that the first factor is positive.
We conclude by a continuity of $v_c(t)$ that there must be a value of  $t$ for which $f_{1}(v_c(t))=0;$ then, \e{f1positivity} implies $v_{c}(t)=0.$ This is the desired contradiction.

Now assume that the conditions \e{vcp1} are satisfied instead of \e{vcm1}.
Lemma \ref{f0properties} then implies $f_0(v_c(0))>0,$ and we see from the definition \e{f2def} of $f_{2}$ that $f_2(v_c(0))>0.$
The right-hand side of \e{a0.5sigma0_periodic_solution2compact} is then initially positive. 
Again taking the limit of the right-hand side of  \e{a0.5sigma0_periodic_solution2compact} as $t\rightarrow\infty,$ we have
\begin{equation}\nonumber
\left(f_0(v_c(0))+  \frac{2\omega_{-2,i}(0)}{\nu }\right)f_2(v_c(0))<0.
\end{equation}
Here  we have used the second condition in \eqref{vcp1} to conclude that the first factor is negative.
As in the previous case, this again means that there exists a value $t>0$ for which $v_c(t)=0.$  This again is the desired contradiction.  This completes the proof.
\end{proof}

\begin{lemma}\label{sufficient1} Either of \e{vcm1infty0} or \e{vcp1infty0} is a sufficient  condition for type B blowup.
\end{lemma}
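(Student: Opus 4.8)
The plan is to follow the template of the proof of Lemma~\ref{lemmasufficient0}, but with the value $0$ of $f_1$ (which detects $v_c=0$) replaced by $\pi/2$ (which detects $v_c=+\infty$). The one genuinely new ingredient I would isolate first is an algebraic identity among the auxiliary functions: from the definitions \e{f-firstTime}, \e{fdefsigninfty} and \e{f2def},
\[
f_0(x)-f_\infty(x)=\frac{\pi}{2}\cdot\frac{(x^2+1)^2(x^2-1)}{x}=\frac{\pi}{2}\cdot\frac{1}{f_2(x)},
\qquad\text{i.e.}\qquad
f_0(x)\,f_2(x)-f_\infty(x)\,f_2(x)=\frac{\pi}{2}.
\]
I would also record the companion facts that $f_0(x)f_2(x)=f_1(x)$ (a direct computation, equivalently \e{a0.5sigma0_periodic_solution2compact} at $t=0$), and that $f_1$ is a strictly increasing continuous bijection $[0,\infty)\to[0,\pi/2)$ by \e{f1der} and \e{f1positivity} with $f_1(x)\to\pi/2$ as $x\to+\infty$; in particular $f_1(v_c(t))<\pi/2$ for every finite $v_c(t)>0$, and $v_c(t)\to+\infty$ is equivalent to $f_1(v_c(t))\to\pi/2$.

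Then I would argue by contradiction, handling the two hypotheses in parallel. Suppose \e{vcm1infty0} or \e{vcp1infty0} holds but type~B blowup does not occur. In either case the constant prefactor of $\frac{dv_c}{dt}$ in \e{a0.5sigma0_periodic_eqns_Vc} has a fixed sign, so $\frac{dv_c}{dt}>0$ along the whole trajectory; this persists even when $v_c(t)$ crosses the value $1$, since that prefactor involves $v_c(0)$ rather than $v_c(t)$, so nothing degenerates there and \e{a0.5sigma0_periodic_solution2compact} keeps holding. Monotonicity then shows that ``type~B blowup does not occur'' forces $v_c(t)$ to remain finite for all $t\ge0$. Under that assumption \e{a0.5sigma0_periodic_solution2compact} holds for all $t$, so $f_1(v_c(t))<\pi/2$ for all $t$; but the right-hand side of \e{a0.5sigma0_periodic_solution2compact}, which equals $f_1(v_c(0))+B(1-e^{-\nu t})$ with $B=\frac{2\omega_{-2,i}(0)}{\nu}f_2(v_c(0))$, tends as $t\to\infty$ to $\big(f_0(v_c(0))+\frac{2\omega_{-2,i}(0)}{\nu}\big)f_2(v_c(0))$, and using the second inequality of \e{vcm1infty0} or \e{vcp1infty0}, the identity above, and the sign of $f_2(v_c(0))$ from \e{f2def} (negative when $0<v_c(0)<1$, positive when $v_c(0)>1$, with the inequality reversing when multiplying through by a negative $f_2$), one checks in both cases that this limit is strictly greater than $\pi/2$. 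This contradicts $f_1(v_c(t))<\pi/2$. Hence $v_c(t)$ cannot remain finite for all $t$; since it is increasing, a finite limit at the maximal time $t_c$ would let the solution continue past $t_c$ (the right-hand side of \e{a0.5sigma0_periodic_eqns_Vc} is then locally bounded), so $v_c(t)\to+\infty$ as $t\to t_c^-$, i.e. type~B blowup.

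The remaining work is the sign bookkeeping that separates the two sectors: in \e{vcm1infty0}, Lemma~\ref{finfinityproperties} gives $f_\infty(v_c(0))>0$, hence $\omega_{-2,i}(0)<0$, and since $v_c(0)<1$ also $f_2(v_c(0))<0$; in \e{vcp1infty0}, $f_\infty(v_c(0))<0$, hence $\omega_{-2,i}(0)>0$ and $f_2(v_c(0))>0$ --- in both cases $B>0$, which justifies the monotone passage to the limit used above. The main obstacle is really just pinning down the identity $f_0f_2-f_\infty f_2=\pi/2$ together with the endpoint value $f_1(+\infty)=\pi/2$; once these are available the argument is a sign-chase that mirrors Lemma~\ref{lemmasufficient0} almost verbatim, the only other point needing care being that \e{a0.5sigma0_periodic_solution2compact} survives the crossing $v_c=1$, which holds because the implicit relation was obtained by integrating the ODE and $f_1$, $\arctan$ are smooth there.
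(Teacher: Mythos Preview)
Your proof is correct and follows essentially the same approach as the paper's. The only cosmetic difference is that the paper first rewrites the implicit relation \e{a0.5sigma0_periodic_solution2compact} in the equivalent form
\[
f_1(v_c(t))=f_\infty(v_c(0))f_2(v_c(0))+\frac{2\omega_{-2,i}(0)}{\nu}f_2(v_c(0))(1-e^{-\nu t})+\frac{\pi}{2},
\]
whereas you keep \e{a0.5sigma0_periodic_solution2compact} as is and invoke the identity $f_0f_2-f_\infty f_2=\pi/2$ when evaluating the $t\to\infty$ limit; these are the same computation, and the contradiction (the right-hand side exceeding $\sup f_1=\pi/2$) is reached identically.
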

%
\begin{proof}

Using $f_{\infty}$ instead of $f_{0},$ we can rewrite
\e{a0.5sigma0_periodic_solution2compact}
as
\begin{align}\label{a0.5sigma0_periodic_solution2compact2} 
&f_1(v_c(t))=f_\infty(v_c(0))f_2(v_c(0))+  \frac{2\omega_{-2,i}(0)}{\nu } f_2(v_c(0))(1-e^{- \nu t})+\frac{\pi}{2}.
\end{align}

Assume \e{vcm1infty0} is satisfied. Then by Lemma \ref{finfinityproperties}, we have $f_{\infty}(v_{c}(0))>0$ and therefore $\omega_{-2,i}<0.$
Then \e{a0.5sigma0_periodic_eqns_Vc} implies that $v_c(t)$ grows monotonically with $t$.  This combined with  \e{f1der} implies that $f_{1}(v_{c}(t))$ grows monotonically with $t.$
The maximum of the function $f_{1}(v_c(t))$ for any value of $v_c(t)$ is given by $\pi/2,$ and this can
only be reached asymptotically as $v_c(t)\to +\infty$ as follows from the monotonic growth \e{f1der} and \e{f1def}. In other words, the maximum possible value of the left-hand side of \e{a0.5sigma0_periodic_solution2compact2} is given by  $\pi/2$.
Taking the limit as $t\rightarrow +\infty$ of the right-hand side of \eqref{a0.5sigma0_periodic_solution2compact2}, we get
\begin{equation}\label{tInfinityTypeBInequality}
\left(f_{\infty}(v_{c}(0))+\frac{2\omega_{-2,i}(0)}{\nu}\right)f_{2}(v_{c}(0))+\frac{\pi}{2}>\frac{\pi}{2}.
\end{equation}
(Here we have used that \eqref{vcm1infty0} together with \e{f2def} implies that each of the two factors in the first term on the left-hand side of this inequality are negative.)
Therefore, equation \eqref{a0.5sigma0_periodic_solution2compact2} cannot be valid in that limit $t\rightarrow +\infty$ and there must have been a finite value of $t_{c}$ for which $v_{c}(t)\rightarrow+\infty$ as $t\rightarrow t_{c}^{-}.$

Next, we assume  \e{vcp1infty0} is satisfied instead of \eqref{vcm1infty0}.  We then have $f_{\infty}(v_{c}(0))<0$ and $\omega_{-2,i}>0.$
Then \e{a0.5sigma0_periodic_eqns_Vc} again implies that $v_c(t)$ grows monotonically with $t.$  Taking the limit as $t\rightarrow\infty$ on the
right-hand side of \eqref{a0.5sigma0_periodic_solution2compact2}, again yields \eqref{tInfinityTypeBInequality} since now each of the two factors in the
first term on the left-hand side are positive.  This again implies the existence of a finite value of $t_{c}$ for which $v_{c}(t)\rightarrow+\infty$ as $t\rightarrow t_{c}^{-}.$
This completes the proof.
\end{proof}

\cblue{Taylor's expansion of (\ref{a0.5sigma0_periodic_eqns}) for  $t$ near $t_c$ using (\ref{vczerolimit})  shows that $\omega_{-2,i} \sim (t_c-t)^{-1/3}$ in type A blowup and $\omega_{-2,i} \sim (t_c-t)^{-5/3}$ in type B blowup (using  (\ref{vczerolimitinfty})).   Consequently,  equation (\ref{a0.5sigma0_periodic_solution_norms_real}) implies that for both types of blowup, $\| \omega(\cdot,t) \|_{L^2} \sim (t_c-t)^{-5/6}$ and $\| \omega(\cdot,t) \|_{B_0} \sim (t_c-t)^{-1}$  as $t \rightarrow t_c^-$. The corresponding similarity parameters (cf. (\ref{self-similar1})) are $\alpha=1/3$ and $\beta=1$.}

Combining Lemmas \ref{typeANecessary}, \ref{typeBNecessary}, \ref{lemmasufficient0} and \ref{sufficient1} we prove the following:

\begin{theorem}{ (Necessary and sufficient condition of collapse).} The conditions  \e{vcm1} or \e{vcp1} are necessary and sufficient  for blowup of type A.  The conditions  \e{vcm1infty} or    \e{vcp1infty} are necessary and sufficient  for blowup of type B. \cblue{ In both blowup types,  $\| \omega(\cdot,t) \|_{L^2}$ and $\| \omega(\cdot,t) \|_{B_0}$ diverge as $t \rightarrow t_c^-$.}   \cblue{If the alternative conditions \e{vcm1global} and \e{vcm1globalinfty} or  \e{vcp1global}  and \e{vcplusglovinfty}  are  satisfied then there is neither type A nor type B blowup and  the solution exists globally for all $t>0$. \label{thm:nec_suff}}
\end{theorem}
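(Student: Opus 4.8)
The plan is to read off the theorem as a direct synthesis of the four preparatory lemmas (Lemmas~\ref{typeANecessary}, \ref{typeBNecessary}, \ref{lemmasufficient0}, \ref{sufficient1}), together with the sector partition of Section~\ref{sec:implicit_start} for the global-existence dichotomy and the identities \e{a0.5sigma0_periodic_solution1}--\e{a0.5sigma0_periodic_solution_norms_real} for the norm divergence. For type A: Lemma~\ref{typeANecessary} shows that type A blowup forces the data into \e{vcm1} or \e{vcp1}, and Lemma~\ref{lemmasufficient0} shows that either of these conditions produces a finite $t_c$ with $v_c(t_c)=0$; hence the union of \e{vcm1} and \e{vcp1} is exactly the type A blowup set. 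For type B: I would first note that \e{vcm1infty0} and \e{vcm1infty} (respectively \e{vcp1infty0} and \e{vcp1infty}) describe the same region, merely rewritten through the definition $\omega_{-2,i,max,\infty}(v_c(0))=-\tfrac{\nu}{2}f_\infty(v_c(0))$ and its analogue; then Lemma~\ref{typeBNecessary} gives necessity and Lemma~\ref{sufficient1} gives sufficiency, so the union of \e{vcm1infty0} and \e{vcp1infty0} is exactly the type B blowup set.

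For the global-existence alternative I would use that Section~\ref{sec:implicit_start} partitions the half-plane $v_c(0)>0$ into the four sectors \e{condition1}--\e{condition4}, in each of which $v_c(t)$ is monotone, so at most one blowup type can occur, and that in Sectors (3) and (4) a trajectory reaching $v_c=1$ crosses transversally into a Sector-(2) or Sector-(1) configuration with no loss of regularity at $v_c=1$ (since \e{a0.5sigma0_periodic_solution1} supplies the compensating zero $\omega_{-2,i}\propto v_c^2-1$ that cancels the $1-v_c^2$ in $\omega_{-1}=2\I v_c\omega_{-2}/(1-v_c^2)$ and in the norms \e{a0.5sigma0_periodic_solution_norms_real}). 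The blowup sets from the lemmas carve each sector into exactly a blowup region and its complement; that complement is precisely the band between the curves $-\tfrac{\nu}{2}f_0$ and $-\tfrac{\nu}{2}f_\infty$ described by \e{vcm1global} together with \e{vcm1globalinfty} for $0<v_c(0)<1$, and by \e{vcp1global} together with \e{vcplusglovinfty} for $v_c(0)>1$. On that complement the monotone $v_c(t)$ stays bounded away from $0$ and $+\infty$ because the dissipative factor $e^{-\nu t}$ in \e{a0.5sigma0_periodic_eqns_Vc} dominates the polynomial growth factor $(v_c^2(t)+1)^3/v_c^2(t)$, and then \e{a0.5sigma0_periodic_solution1} keeps $\omega_{-2,i}(t)$ bounded, so the ansatz \e{omegaa0.5sigma0_periodic_lower} remains well-defined for all $t>0$.

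For the norm divergence I would note that by \e{a0.5sigma0_periodic_solution1}, $|\omega_{-2,i}(t)|\to\infty$ precisely when $v_c(t)\to 0^+$ (through the factor $v_c(0)/v_c(t)$) or $v_c(t)\to+\infty$ (through the polynomial factor $(v_c^2(t)-1)(v_c^2(t)+1)^2/v_c(t)$), and these are the only mechanisms of divergence. Feeding the expansions \e{vczerolimit} and \e{vczerolimitinfty}, together with the associated asymptotics $\omega_{-2,i}\sim(t_c-t)^{-1/3}$ (type A) and $\omega_{-2,i}\sim(t_c-t)^{-5/3}$ (type B), into \e{a0.5sigma0_periodic_solution_norms_real} yields $\|\omega(\cdot,t)\|_{L^2}\sim(t_c-t)^{-5/6}$ and $\|\omega(\cdot,t)\|_{B_0}\sim(t_c-t)^{-1}$ in either blowup type, so both norms diverge as $t\to t_c^-$.

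I do not expect a substantive obstacle here: the analytic content resides in Lemmas~\ref{f0properties}, \ref{finfinityproperties}, \ref{typeANecessary}, \ref{typeBNecessary}, \ref{lemmasufficient0}, \ref{sufficient1}. The only point requiring care is the dichotomy bookkeeping --- verifying that within every sector ``neither type A nor type B blowup'' is genuinely equivalent to global existence of \e{omegaa0.5sigma0_periodic_lower} (in particular that nothing breaks at $v_c=1$), and that the two alternative-condition bands exhaust the non-blowup part of the half-plane $v_c(0)>0$. Once this is checked, the four lemmas slot together directly into the stated necessary-and-sufficient characterization.
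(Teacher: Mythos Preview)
Your proposal is correct and follows essentially the same approach as the paper: the paper simply states that the theorem follows by combining Lemmas~\ref{typeANecessary}, \ref{typeBNecessary}, \ref{lemmasufficient0} and \ref{sufficient1}, with the norm divergence supplied by the asymptotics $\omega_{-2,i}\sim(t_c-t)^{-1/3}$ (type A) and $\omega_{-2,i}\sim(t_c-t)^{-5/3}$ (type B) fed into \e{a0.5sigma0_periodic_solution_norms_real}, and the global-existence alternative handled by the observation (made immediately after each necessity lemma) that the complementary inequalities \e{vcm1global}, \e{vcp1global}, \e{vcm1globalinfty}, \e{vcplusglovinfty} preclude a finite $t_c$ in \e{tcsol1} and \e{tcsol1infty}. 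Your write-up is somewhat more explicit about the sector bookkeeping and the harmless crossing at $v_c=1$, but this is exactly the content of Section~\ref{sec:implicit_start} that the paper is implicitly invoking.
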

%
\noindent{\bf Remark 1.}   Solutions of   \e{a0.5sigma0_periodic_eqns} for initial data at the boundary of the global existence domain, shown by red and blue curves in Fig. \ref{fig:figblowup1p2sigma0},  blow up in infinite time.
The boundary of the global existence domain is also revealed by  numerical solution of (\ref{a0.5sigma0_periodic_eqns}), and gives excellent agreement with the theoretical boundary  determined by  (\ref{vcm1boundary}), (\ref{vcp1boundary}), (\ref{vcm1infty0boundary}), (\ref{vcp1infty0boundary}), shown by red-dashed curves
in Figure \ref{fig:coffee}, Appendix \ref{sec:AppendixANum}.
\vspace{.04in}
\\ \noindent{\bf Remark 2.} The particular initial condition $v_c(0)=1$ has a singularity in the implicit solution  \e{a0.5sigma0_periodic_solution2}. That singularity can be understood as a limit $v_c(0)\to 1$ while also taking the limit $\omega_{-2,i}(0)\to 0$. Depending on how these two limits are taken, the solution can approach the point $(1,0)$ in the plane $(v_c(0),\omega_{-2,i}(0))$   from the different sectors in Fig. \ref{fig:figblowup1p2sigma0}  that surround the point  $(1,0)$.
\vspace{.1in}
\\ \noindent{\bf Remark 3.} Theorem 3.7 shows that finite-time singularities for $a=1/2, \sigma=0$ can occur for arbitrarily small $L^2$ norm of $\omega(x,0)$. This follows from the asymptotic behavior $\omega_{-2,i} \sim (4/3) \nu  v_c^2$ of the boundary of the yellow blow-up region in Figure \ref{fig:figblowup1p2sigma0} when $0<v_c(0) \ll 1$.
Thus from (\ref{a0.5sigma0_periodic_solution_norms_real}),  $ \| \omega( \cdot,0) \|_{L^2}$ can be made arbitrarily small in this blow-up region by taking $v_c$ sufficiently small.  In contrast, (\ref{a0.5sigma0_periodic_solution_norms_real}) implies the Wiener norm blows up only for sufficiently large data.

\subsubsection{Limit of the real line }
\label{sec:limitreallinesigma0}

\cblue{Pole dynamics equations for the problem on the real line are obtained in the limit $v_c\to 0$, for which the influence of the $2\pi$ periodicity is negligible. In this case, Laurent series expansion of  the right-hand sides of   \e{a0.5sigma0_periodic_eqns} about $v_c=0,$ keeping only leading order terms $\propto v_c^{-2}$ and $ \propto v_c^0$ in the first ODE and  $\propto v_c^{-1}$ in the second ODE, results in the reduction
\begin{equation}\label{reductiona0.5sigma0}
\frac{d \omega_{-2,i}}{dt}=  \omega_{-2,i}^2\frac{1}{4v_c^2} - \nu  \omega_{-2,i},  \qquad
\frac{dv_c}{dt} =-\omega_{-2,i}\frac{1}{4v_c},
\end{equation}
which recovers  (35) and (36) of \cite{Lushnikov_Silantyev_Siegel} for $\nu=0$ with $\omega_{-2,i}$ replaced by $\tilde \omega_{-2}/4$ and $v_c$ replaced by $v_c/2$ to match the definitions of \cite{Lushnikov_Silantyev_Siegel}. This replacement can be understood by expanding the right hand side  of \e{omegaa0.5sigma0_periodic_lower} in $|x|\ll 1$ and keeping only the most singular term $(x-2\I v_c(t))^{-2}$, which gives
\begin{equation}\label{omegam2reduction}%
\omega_-(x,t)=\frac{4\omega_{-2}(t)}{(x-2\I v_c(t))^2}.
\end{equation}
This can be directly compared with (33) of  \cite{Lushnikov_Silantyev_Siegel} if we recall that $\omega_{-2}(t)=\I\omega_{-2,i}(t)$, as assumed in \e{a0.5sigma0_periodic_eqns}. In a similar way we expand \e{uap1p2sigma0} and \e{qtdef} to obtain
\begin{align}\label{uap1p2sigma0reduction}
u(x,t)=\frac{-4\I\omega_{-2}(t)}{x-2\I v_c(t)}+\frac{4\I\bar \omega_{-2}(t)}{x+2\I\bar v_c(t)} +  q(t),
\end{align}
which recovers (34)  of  \cite{Lushnikov_Silantyev_Siegel} for $q(t)\equiv 0.$
For $\nu\ne 0$, equations \e{reductiona0.5sigma0}-\e{uap1p2sigma0reduction} 
 generalize the solution (33)-(36) of \cite{Lushnikov_Silantyev_Siegel} to account for nonzero dissipation with $\sigma=0.$
}

\cblue{If
we similarly perform the  Laurent series expansion of the right-hand sides of the more general ODEs \e{vcODEa1p2sigma0} and \e{omm2ODEa1p2sigma0}, new exact solutions of \e{mainEquationa0.5sigma0_periodic} are obtained on the real line in the form \e{omegam2reduction},\e{uap1p2sigma0reduction}
with
\begin{equation}\label{vcODEa1p2sigma0reduction}
\frac{d\omega_{-2}}{dt} =[- \nu+\I \omega_{av}(t)]\omega_{-2}
+\frac{ \I|\omega_{-2}|^2 }{( v_c+\bar  v_c)^2}
, \quad \frac{dv_c}{dt} =-\frac{\I  \bar \omega_{-2}}{2  ( v_c+\bar  v_c)}-\frac{\I q(t)}{4}, \quad \omega_{av}(t)=\omega_{av}(0)e^{-\nu t},
\end{equation}
where we have also used \e{ca1p2sigma0}.
This can also be derived by  direct substitution of \e{omegam2reduction}-\e{vcODEa1p2sigma0reduction} into \e{mainEquationa0.5sigma0_periodic}.
}

\subsection{$a=1/2$ and $\sigma=1$} \label{sec:a0.5sigma1}

In the case $a=1/2$, $\sigma=1$, $\nu>0$,   (\ref{CLM}) becomes
\begin{equation} \label{mainEquationa0.5sigma1_periodic}
\tilde{\omega}_{t}=-\frac{1}{2}u \tilde{\omega}_x + \tilde{\omega}  \mathcal{H}(\tilde{\omega})-\nu \mathcal{H}(\tilde{\omega}_x), \quad u_x =\cal{H} \tilde{\omega}.
\end{equation}
Similar to Section  \ref{sec:a1p2sigma0}, we look for a solution to \e{mainEquationa0.5sigma1_periodic} of the form $\tilde{\omega}=\omega_+ + \omega_- + \omega_{av}$  with $\omega_-$ as in  (\ref{omegaa0.5sigma0_periodic_lower}), i.e., a sum of a double pole and a simple pole in $X=\tan(\frac{x}{2})$-space, and $\omega_+(x,t)=\overline{\omega_-(\overline{x},t)}$.


As before, the special choice (\ref{omegaa0.5sigma0_periodic_lower_1}) for  $\omega_{-1}$ in (\ref{omegaa0.5sigma0_periodic_lower})
ensures logarithmic terms  to cancel out.  Substituting \e{omegaa0.5sigma0_periodic_lower} into \e{mainEquationa0.5sigma1_periodic} and performing a similar analysis as  in Section \ref{sec:a1p2sigma0}, we obtain the following equations for  the complex quantities $\omega_{-2}(t)$ and $v_c(t)$:
\begin{equation}\label{vcODEa1p2sigma1}
\frac{dv_c}{dt} =\frac{\nu}{2}(1-v_c^2)-\frac{\I \left(1- v_c^2\right) \bar \omega_{-2}}{2 \left(  1-\bar v_c^2\right) ( v_c+\bar  v_c)}+\omega_{-2}\frac{\I v_c}{2(1-v_c^2)}-\frac{\I q(t)}{4}(1-v_c^2),
\end{equation}
and
\begin{align}\label{omm2ODEa1p2sigma1}
\frac{d\omega_{-2}}{dt} =-2\nu v_c\omega_{-2}+\frac{\I \left(1-2 v_c^2\right)  \omega_{-2}^2}{\left(1- v_c^2\right)^2}+[\I q(t)v_c\,+\I \omega_{av}(t)]\omega_{-2}\nonumber \\+\I|\omega_{-2}|^2\left [\frac{ 1 }{( v_c+\bar  v_c)^2}+\frac{1}{  1-\bar v_c^2}\right ],
\end{align}
where $ q(t)\in\mathbb{C}$ is an arbitrary smooth function of time and is related to the mean value of $u(x,t)$ that can be prescribed  as in  \e{qtdef}.%

The function $\omega_{av}(t)$ is determined at order  $\propto [X-\I v_c(t)]^{0}$ as
${d \omega_{av}}/{dt}=0$, i.e. it has the time-independent value
\begin{equation}\label{ca1p2sigma1}
\omega_{av}(t)= \omega_{av}(0).
\end{equation}

A particular reduction of such ODE system occurs for
purely imaginary $\omega_{-2}(t)$ and purely real $v_c(t)$ as
\begin{equation}\label{ODEreductiona1p2sigmao1}
\omega_{-2}(t)=\I \omega_{-2,i}(t), \quad Re[ \omega_{-2,i}(t)]= \omega_{-2,i}(t), \quad v_c(t)=Re[v_c(t)].
\end{equation}
Then  \e{vcODEa1p2sigma1}-\e{omm2ODEa1p2sigma1} imply $\omega_{av}(t)\equiv q(t)\equiv 0$.  Equations \e{vcODEa1p2sigma1}-\e{ODEreductiona1p2sigmao1} result in the real-valued ODE system
\begin{equation}\label{a0.5sigma1_periodic_eqns} 
\frac{d \omega_{-2,i}}{dt}=  \omega_{-2,i}^2\frac{(1-2v_c^2+5v_c^4)}{4v_c^2(1-v_c^2)^2} - 2\nu  v_c\omega_{-2,i},  \qquad
\frac{dv_c}{dt} =\frac{\nu}{2}(1-v_c^2)-\omega_{-2,i}\frac{1+v_c^2}{4v_c(1-v_c^2)}.
\end{equation}
We are not able to find analytical solutions  even to  the simplified system \e{a0.5sigma1_periodic_eqns}.
However, in the next section we  characterize initial data leading to finite-time singularity formation or alternatively to global-in-time existence of solutions  using a phase-plane analysis.

\subsubsection{Phase-plane analysis} \label{sec:phase-plane}

In this section we develop the phase-plane analysis for $a=1/2, \sigma=1$,  and in  Appendix \ref{sec:AppendixA}
we provide the analysis for    $a=1/2,~ \sigma=0$ to complement the results of Section \ref{sec:a1p2sigma0}. An advantage of the phase-plane analysis is that it does not require  closed form solutions, which are usually not available.

We consider the real-valued ODE system  \e{a0.5sigma1_periodic_eqns}  and
first point out a difficulty in the phase-plane analysis.  Inspection  of the right-hand side of the equation for $d \omega_{-2,i}/dt$  shows that it is positive   in the  lower right quarter-plane $\omega_{-2,i}<0$, $v_c>0$, indicating that  trajectories approach $\omega_{-2,i}=0$ as $t$ increases.
Furthermore, the trajectories generally only reach $\omega_{-2,i}=0$ in infinite time.
Thus at first glance the lower right quarter-plane of $(v_c, \omega_{-2,i})$ space  is an invariant domain  in which the solution exists globally in time. However, there exist `pathological' trajectories which  pass through the point $(v_c, \omega_{-2,i})=(1,0)$  with   $\omega_{-2,i}^2/(1-v_c)^2 $ approaching a nonzero constant at that point, since the right-hand side  of the equation for $d \omega_{-2,i}/dt$ is then $O(1)$  as $v_c  \rightarrow 0$,  $\omega_{-2,i} \rightarrow 0$.
These trajectories emerge in the upper right quadrant of $(v_c, \omega_{-2,i})$ space   where their  behavior is undetermined by the analysis.

As will be seen, this difficulty in the analysis can be avoided by using
\be  \label{eq:p_substitution}
p=\frac{\omega_{-2,i}}{v_c(1-v_c^2)},
\ee
as an independent variable instead of $\omega_{-2,i}$. We continue to use $v_c$ as the second independent variable. 

We express the norms (\ref{a0.5sigma0_periodic_solution_norms_real}) in terms of $p(t)$ and $v_c(t)$:
\begin{equation}
\begin{split}
 \| \omega(\cdot,t) \|_{L^2}^2 &= 2 \pi  p^2(t) \left( \frac{1}{v_c(t)} + v_c(t) \right)  \label{a0.5sigma0_periodic_solution_norms_real_p}\\
 \| \omega(\cdot,t) \|_{B_0}& =\left\{ \begin{matrix} 2|p(t)|/v_c(t), ~\mbox{for} ~ 0 < v_c \leq 1 \\ 2|p(t)| v_c(t), ~\mbox{for}~ v_c > 1.~~~~~~
\end{matrix}
\right.
\end{split}
\end{equation}
In terms of the variable $p$ defined in (\ref{eq:p_substitution}) and $v_c$, the system  (\ref{a0.5sigma1_periodic_eqns}) becomes
\begin{align}
\begin{split}
\frac{dp}{dt} &:= J(v_c,p)= p(pR-\nu Q), \label{eq:p_sig=1} \\
\frac{d v_c}{dt}  &:=K(v_c,p)=-\frac{p}{4} (1+v_{c}^{2})+\frac{\nu}{2}(1-v_{c}^{2}).
\end{split}
\end{align}
where
\begin{equation} \label{eq:RQ}
R= \frac{1}{2} \left( \frac{1}{v_c} - v_c \right)~~\mbox{and}~~Q=\frac{1}{2} \left( \frac{1}{v_c} + v_c \right).
\end{equation}
Henceforth let $\nu=1$, which is equivalent to a rescaling of $t$ and $\omega$.  Note that
\be \label{eq:symm}
J(v_c^{-1},-p)=-J(v_c,p)~~\mbox{and}~~K(v_c^{-1},-p)=-v_c^{-2} K(v_c,p),
\ee
implying that  (\ref{eq:p_sig=1}) is
invariant under the substitution
$v_c \rightarrow 1/v_c ~\mbox{and}~ p \rightarrow -p.$
Hence we need only perform the phase-plane analysis for $p \geq 0$, with the corresponding results for   $p<0$ following from symmetry.

We first   consider the nullclines of the system (\ref{eq:p_sig=1}), which are the  curves in the $p$ versus $v_c$  phase plane where $J(v_c,p)=0$ and separately   the curves where $K(v_c,p)=0$. The $J$-nullclines are   $p=0$ and $p=f_J(v_c)$ where
\begin{equation}\nonumber
f_J(v_c) =  \frac{1+v_c^2}{1-v_c^2},
\end{equation}
for $0 <v_c < \infty$.  These are shown by the dashed curves in Figure \ref{fig:phaseplane_sig=1}.   The $J$-nullclines divide $p \geq 0$ phase-space into  two connected regions which differ in their sign of  $dp/dt$:
\begin{align}
\begin{split}
\label{upper_quadrant}
(a)~ \frac{dp}{dt}&>0 ~\mbox{for}~ 0<v_c<1 ~\mbox{and} ~ f_J<p<\infty,  \\
(b)~ \frac{dp}{dt}&<0 ~\mbox{for}~ 0<v_c<1 ~\mbox{and} ~ 0<p<f_J  ~\mbox{or}~ 1 \leq v_c<\infty ~\mbox{and}~ 0<p<\infty.
\end{split}
\end{align}
The $K$-nullcline is given by  the curve $p=f_{K}(v_c)$ where
\begin{equation}\nonumber 
f_{K} (v_c)=2  \left( \frac{1-v_c^2}{1+v_c^2} \right).
\end{equation}
for $0 < v_c < \infty$.  The direction field $S$, which prescribes the slopes of the trajectories in the phase plane, is given  by
\be \label{eq:S}
S(v_c,p) : = \frac{J(v_c,p)}{K(v_c,p)}= \frac{p(pR- Q)}{-\frac{p}{4} (1+v_{c}^{2})+\frac{1}{2}(1-v_{c}^{2})}
\ee
using (\ref{eq:p_sig=1}). Given  a  piecewise smooth curve  $p=f(v_c)$  in the phase plane we also define
\be  \label{eq:T}
T(v_c, f(v_c)) :=  -K(v_c, f(v_c)) f'(v_c) +J(v_c, f(v_c))
\ee
for all $v_c \in (0,\infty)$ at which $f'(v_c)$ exists;  otherwise $T$ is defined to be the average $T(v_c,f(v_c))=(1/2) (T(v_c^+,f(v_c^+)) + T(v_c^-,f(v_c^-)))$.
Note that $T$ gives the component of the trajectory at $(v_c,f(v_c))$  in the direction of the upward pointing normal to the curve $f(v_c)$.
If $T$ is one-signed (i.e., has no zeros) in $0<v_c<\infty$ then the trajectories intersecting the curve  $f(v_c)$ are transverse to that curve.




\subsubsection{Global existence} \label{sec:global}
         We show that there is an invariant region $\Omega$   of the $p$ versus $v_c$  phase plane in which the solution to (\ref{eq:p_sig=1}) exists globally in time (Theorem \ref{thm:global_sig=1}).  At any point in the complement $\Omega^c$ the solution may blowup in finite time -- indeed Theorem \ref{thm:blowup} guarantees it blows up in a subset of $\Omega^c$.

Define $\Omega$ to be the region of the phase plane  that is  bounded above and below   by the curves
\begin{align} \label{eq:p1p2}
\begin{split}
\pone&= 1  + c_1  H(v_c-1) \cdot (v_c-1),\\
\ptwo&=-1  - c_1 H(1-v_c) \cdot(v_c^{-1}- 1),
\end{split}
\end{align}
for $0<v_c<\infty$,
 where  $H(x)$ is the Heaviside function
and $0<c_1<  (3+\sqrt{73})/8$.  The region $\Omega$ is illustrated in Figure \ref{fig:phaseplane_sig=1}. Note that $p_2(v_c)$ is obtained from $p_1(v_c)$ using the symmetry (\ref{eq:symm}), i.e., $p_2(v_c)=-p_1(v_c^{-1})$.    We have for $(v_c(t),p(t))$ satisfying (\ref{eq:p_sig=1}):
\begin{theorem} \label{thm:global_sig=1} {\bf Global existence for ${a=1/2}$, ${ \sigma=1}$}.
For all
 initial data $(v_c(0),p(0))$  in $\Omega$ and all $t>0$,  $(v_c(t),p(t)) \in \Omega$  with  $v_c(t)$ bounded away from zero and infinity. Hence the solution (\ref{omegaa0.5sigma0_periodic_lower}, \ref{ODEreductiona1p2sigmao0})  exists and is analytic for all $t>0$.  Furthermore $p(t) \rightarrow 0$ and  $\omega(x,t) \rightarrow 0$
 as $t \rightarrow \infty$.
\end{theorem}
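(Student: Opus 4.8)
The plan is to establish the four assertions of the theorem in turn: (i) $\Omega$ is positively invariant under the flow of \eqref{eq:p_sig=1}; (ii) along any trajectory starting in $\Omega$, $v_c(t)$ remains in a fixed compact subinterval of $(0,\infty)$; (iii) consequently the solution is global and analytic; and (iv) $p(t)\to 0$ and $\omega(x,t)\to 0$ as $t\to\infty$.

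\textbf{Invariance of $\Omega$.} I would check the sign of the upward normal component $T$ of the vector field, defined in \eqref{eq:T}, on the two boundary curves $p=p_1(v_c)$ and $p=p_2(v_c)$. Because the substitution $(v_c,p)\mapsto(1/v_c,-p)$ maps $\Omega$ onto itself and, by \eqref{eq:symm}, maps solutions of \eqref{eq:p_sig=1} to solutions, it suffices to treat the upper curve. On $0<v_c\le 1$ we have $p_1\equiv 1$, $p_1'\equiv 0$, so $T(v_c,1)=J(v_c,1)=R-Q=-v_c<0$: the flow crosses strictly inward. On $v_c\ge 1$, $p_1=1+c_1(v_c-1)$, and substituting this into $T=-c_1K+J$ and clearing the positive factor $4v_c$ reduces the desired inequality $T\le 0$ to a polynomial inequality in $v_c$; the bound $c_1<(3+\sqrt{73})/8$, equivalently $4c_1^2-3c_1-4<0$, is exactly what makes this inequality hold for all $v_c\ge 1$ (strictly, except at the corner $v_c=1$, where the averaged-$T$ convention of \eqref{eq:T} yields $c_1/4-1<0$). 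Hence no trajectory leaves $\Omega$ through its lateral boundary. \emph{This polynomial inequality — and the verification that the stated range of $c_1$ is sharp for it — is the one genuinely computational step, and the main obstacle.}

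\textbf{$v_c$ stays away from $0$ and $\infty$.} If $(v_c,p)\in\Omega$ with $v_c\le 1$ then $p<1$, so from the second line of \eqref{eq:p_sig=1}, $\dot v_c>-\tfrac14(1+v_c^2)+\tfrac12(1-v_c^2)=\tfrac14(1-3v_c^2)\ge\tfrac1{16}$ whenever $v_c\le\tfrac12$. Thus $v_c$ is strictly increasing on $\{v_c\le\tfrac12\}\cap\Omega$ and can never cross the level $\tfrac12$ from above, giving $v_c(t)\ge m_0:=\min(v_c(0),\tfrac12)>0$ for all $t\ge 0$; applying the $v_c\mapsto 1/v_c$ symmetry gives $v_c(t)\le M_0:=\max(v_c(0),2)<\infty$. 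Together with the invariance of $\Omega$, the trajectory is confined to the compact set $\overline\Omega\cap\{m_0\le v_c\le M_0\}$. Since the field $(K,J)$ of \eqref{eq:p_sig=1} is real-analytic on $\{v_c>0\}\times\mathbb{R}$ — the change of variable \eqref{eq:p_substitution} having removed the $v_c=1$ singularity of \eqref{a0.5sigma1_periodic_eqns} — the solution therefore exists for all $t\ge 0$ and is analytic in $t$. Transferring back via $\omega_{-2,i}=p\,v_c(1-v_c^2)$ and $\omega_{-1}=-2p\,v_c^2$ (the latter from \eqref{omegaa0.5sigma0_periodic_lower_1}), and noting that $v_c(t)\in[m_0,M_0]$ keeps the poles $X=\pm\mathrm{i}v_c(t)$ in a compact subset of $\mathbb{C}\setminus\mathbb{R}$, the ansatz \eqref{omegaa0.5sigma0_periodic_lower} defines a solution of \eqref{mainEquationa0.5sigma1_periodic} analytic in a fixed strip about the real $x$-axis for every $t>0$.

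\textbf{Decay.} Since $p=0$ is a $J$-nullcline it is invariant, so a trajectory with $p(0)>0$ stays in $\{p>0\}$ (the case $p(0)<0$ follows by symmetry, and $p(0)=0$ gives $\omega\equiv 0$). On $\Omega\cap\{p>0\}$ one has $\dot p<0$ everywhere: for $v_c\ge 1$ this is case (b) of \eqref{upper_quadrant} for any $p>0$, while for $0<v_c<1$ one has $p<1\le f_J(v_c)$, again landing in case (b). Hence $p(t)$ decreases to some limit $p_\infty\ge 0$. If $p_\infty>0$, then on the compact set $\overline\Omega\cap\{p_\infty\le p\le p(0),\ m_0\le v_c\le M_0\}$ the continuous function $pR-Q$ is strictly negative (it vanishes only on the $J$-nullcline $p=f_J$, which does not intersect $\Omega$), so $pR-Q\le-\delta<0$ there; then $\dot p=p(pR-Q)\le-\delta p_\infty<0$ for all large $t$, forcing $p(t)\to-\infty$, a contradiction. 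Therefore $p_\infty=0$. Finally, by \eqref{a0.5sigma0_periodic_solution_norms_real_p} and $v_c(t)\in[m_0,M_0]$, $\|\omega(\cdot,t)\|_{L^2}^2=2\pi p^2(t)\big(v_c^{-1}(t)+v_c(t)\big)\le 2\pi(m_0^{-1}+M_0)p^2(t)\to 0$, and likewise $\|\omega(\cdot,t)\|_{B_0}\to 0$; in particular $\omega(x,t)\to 0$. (Alternatively one could invoke the Poincar\'e--Bendixson theorem together with the fact that $(v_c,p)=(1,0)$ is the only equilibrium of \eqref{eq:p_sig=1} contained in $\Omega$.)
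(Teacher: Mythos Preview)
Your proposal is correct and follows essentially the same route as the paper: you verify the inward-pointing condition $T<0$ on the upper boundary (splitting at $v_c=1$ and reducing the $v_c>1$ case to the polynomial inequality tied to $4c_1^2-3c_1-4<0$), use the sign of $K$ to trap $v_c$ in a compact interval, and deduce $p\to 0$ from the fact that the $J$-nullcline $p=f_J$ lies outside $\Omega$. Your treatment is in fact somewhat more explicit than the paper's---you give concrete barriers $v_c\in[\min(v_c(0),\tfrac12),\max(v_c(0),2)]$ and a clean contradiction argument for $p_\infty=0$---but the underlying ideas are the same.
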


\begin{figure}[h]
\centering
\includegraphics[width=0.8\textwidth]{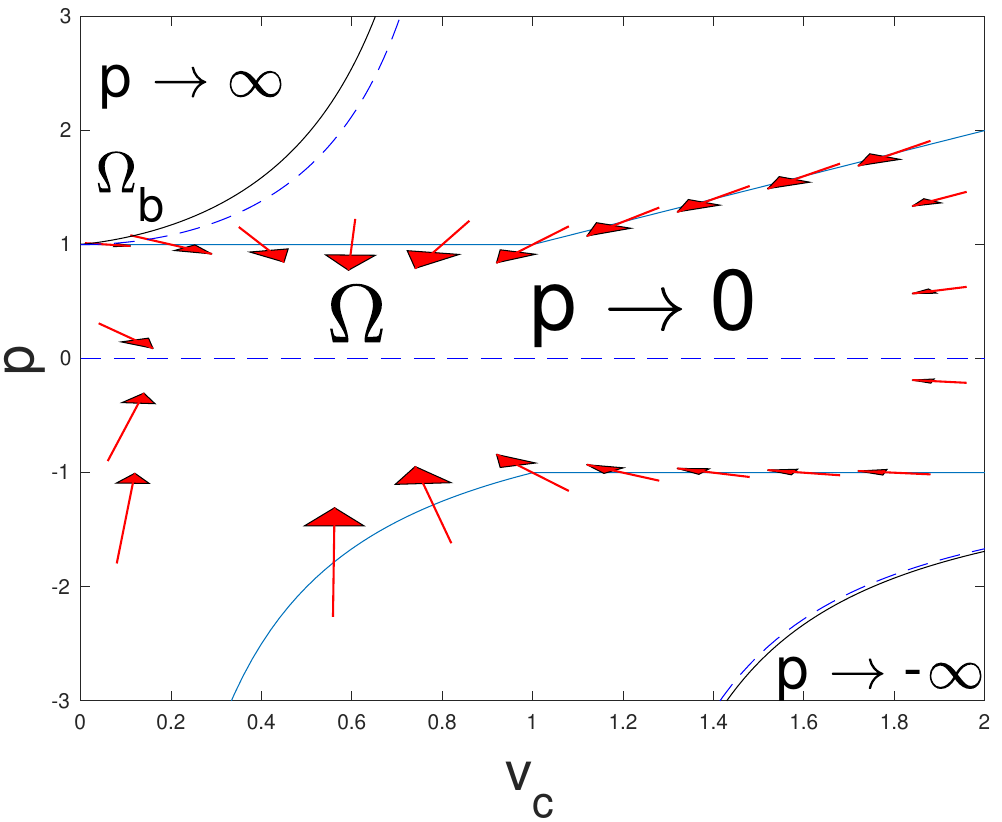}
\caption{Phase plane for the system (\ref{eq:p_sig=1}).  Dashed lines are $J$-nullclines.  A global existence region $\Omega$  is shown with the direction field $S$ overlaid on its boundary.  A blow-up region  $\Omega_b$ is bounded by  a solid curve. There is another blowup region at lower right.}
 \label{fig:phaseplane_sig=1}
\end{figure}

\noindent
 {\it Proof}.  We show that  $\Omega$ is an invariant subset under the dynamics given by (\ref{eq:p_sig=1})
 by demonstrating  that  the  trajectories, which have slope $S(v_c,p)$, are directed into the the interior of  $\Omega$ (in view of the symmetry (\ref{eq:symm}), it is enough to consider only the upper and left boundaries of $\Omega$). This   is a consequence of the following  properties: (i) $dp/dt<0$ at all points  $(v_c,\pone)$ on the upper boundary of $\Omega$, 
 (ii) $T(v_c,p_1(v_c))<0 $ for $0<v_c<\infty$,   
 and (iii) $v_c'(t) >0$ for $v_c \ll 1$ and  $v_c'(t) <0$ for $v_c \gg1$.
 Properties (i) and (ii)  ensure  that the trajectories are transverse to the upper boundary  of $\Omega$ (hence also the lower boundary)  and directed into $\Omega$.   Property (iii) implies that $v_c$ can neither tend to zero or infinity, so the singularity does not impinge on the imaginary axis or go off  to infinity.
 Figure \ref{fig:phaseplane_sig=1} illustrates the domain $\Omega$ and  direction field.

Property (i)  follows  from  (\ref{upper_quadrant}b),  since $0<p_1(v_c)<f_p(v_c)$ for $0<v_c <1$, and otherwise $0<p_1<\infty$. Property (ii) is a consequence of the following estimates, divided into two cases:

{\em{Case 1: $0<v_c \leq 1$}}.    By direct calculation $T(v_c,p_1(v_c)) =-v_c$, which is strictly less than zero for all $v_c$ in the indicated range.
Thus trajectories are transverse to  and directed into the upper boundary of $\Omega$.

 {\em{Case 2: $1<v_c  < \infty.$} } We write
 \begin{align}\nonumber
 T(v_c,p_1(v_c)) &=  -\frac{U(v_c)}{4 v_c}
 \end{align}
 where from (\ref{eq:S})  and (\ref{eq:T}),   $U(v_c) :=2 p_1^2(v_c)(v_c^2-1)+2 p_1(v_c) (v_c^2+1) - c_1 V(v_c)$ with $V(v_c)  := v_c [p_1(v_c) (1+v_c^2) + 2 (v_c^2-1)]$.
 We show that $U(v_c)$  can  be made strictly positive with a suitable choice of $c_1$.   Substitute for $p_1(v_c)$ using (\ref{eq:p1p2})
 to find
 \be \label{eq:U}
 U(v_c)= \sum_{n=0}^{4} A_n (v_c-1)^n
 \ee
 where  $A_0=4-2c_1$, $A_1=8-4c_1-2c_1^2$, $A_2=4+3 c_1-4 c_1^2$, $A_3=3 c_1 + c_1^2$,  and $A_4=c_1^2$. The
 coefficients in (\ref{eq:U})
 are  all non-negative when $0<c_1 \leq \frac{3+\sqrt{73}}{8} \approx 1.443$, in which case the zeros of $U(v_c)$ can only occur for $v_c<1$.
 It follows that $T(v_c, p_1(v_c))$  is
 negative for all $v_c$ in $1<v_c<\infty$.
 This finishes the verification of property (ii).

To verify property (iii), note from the second equation of (\ref{eq:p_sig=1})  that $K(v_c,p)>0$ for $(v_c,p) \in \Omega$ with $v_c$ near zero. Hence trajectories are directed into the interior of $\Omega$ near the boundary $v_c=0$. Similarly, $K(v_c,p)<0$ for  $(v_c,p) \in \Omega$ and $v_c>>1$. This establishes property (iii). It follows that $(v_c(t), p(t)) \in \Omega$ for all $t>0$. Finally,  $p \rightarrow 0$ as $t \rightarrow \infty$ since $J(v_c,p)$ in (\ref{eq:p_sig=1}) is negative
  (respectively positive)  for $(v_c,p) \in \Omega$ with $p>0$ (respectively $p<0)$. As $v_c$ is bounded away from $0$ or infinity, $\omega \rightarrow 0$ as $t \rightarrow \infty$  follows. $\square$

\begin{remark}
More generally, the upper boundary of $\Omega$ can be constructed to have the   power law behavior $p_1(v_c) \sim c v_c^\gamma$ for constant $c>0$.
Then asymptotically for   $v_c \gg 0$,  $T \sim (\gamma-2) c v_c^{\gamma-1}/4$, and the choice $1<\gamma<2$  implies that  $T<0$ for  $v_c \gg 0$. We have constructed such $p_1(v_c)$  so that properties  (i)-(iii) in the proof of Theorem \ref{thm:global_sig=1} are satisfied  in the entire range $0<v_c<\infty$.  This gives an expanded domain $\Omega$ of  global existence (see Figure \ref{fig:pode}).
\end{remark}

A consequence of Theorem \ref{thm:global_sig=1} is that the $L^2$ and $B_0$  norms of the initial data $\omega(x,0)$  must be sufficiently large for the solution  (\ref{omegaa0.5sigma0_periodic_lower})   to blow up in finite time. This is because $|p(0)|$ must be large enough for the initial data to be in $\Omega^c$, outside of the region of global existence (see Figure \ref{fig:phaseplane_sig=1}).  Combined with
(\ref{a0.5sigma0_periodic_solution_norms_real_p}), this  implies that  $\| \omega (\cdot, 0) \|_{L^2}, ~\| \omega (\cdot, 0) \|_{B_0}$ are bounded from below in $\Omega^c$,  and hence there is no blowup for sufficiently small data. This contrasts the situation for $a=1/2, ~\sigma=0$ in which  $\| \omega \|_{L^2}$ can blow up for arbitrarily small data, as noted in Remark 3 of Section \ref{sec:sufficient}.   An analogous `double pole' exact solution for the problem on $\mathbb{R}$ also blows up for arbitrarily small data \cite{AmbroseLushnikovSiegelSilantyev}.

\subsubsection{Finite-time singularity formation}

We next show that $p$ blows up in finite time for  initial data in the set   $\Omega_b$  shown in
Figure  \ref{fig:phaseplane_sig=1}.  To motivate this, note from (\ref{eq:p_sig=1}) (with $\nu=1$)  that if
$p(0)  >0$ and
\be \label{blow_up_region}
p(t) R(v_c(t))-Q(v_c(t))>\lambda p(t)+ \mu
\ee
for constants $\lambda, \mu>0$ and $t>0$, then $p$ increases  in time and $dp/dt$ grows quadratically in $p$, implying that $p$ blows up in finite time.  The key to showing finite-time  blowup is to find $\lambda$ and $\mu$ such that the inequality (\ref{blow_up_region}) holds for all $t>0$.  We show this for the particular choice $\lambda=1/100$ and $\mu=1/5$.
\begin{theorem} \label{thm:blowup} {\bf Finite-time singularity formation for ${a=1/2}$, ${\sigma=1}$.}
Let $\Omega_b$ be the region of the upper-half phase plane lying on or above the curve  $p_b(v_c)=\frac{Q(v_c)+\mu}{R(v_c)-\lambda}$
for  $\lambda=1/100$ and $\mu=1/5$, where $0<v_c < \sqrt{1-2 \lambda}$.  Then for initial data $(v_c(0),p(0)) \in \Omega_b$, we have for $t>0$
\be \label{eq:p_bound}
p(t)>\frac{ \mu D e^{\mu t}}{1-\lambda D e^{\mu t}}
\ee
where $D=p(0)/(\lambda p(0)+\mu)$. Hence $p(t) \rightarrow \infty$,  $\| \omega \|_{L^2} \rightarrow \infty$, and $\| \omega \|_{B_0} \rightarrow \infty$
in finite time $0< t_c<-(1/\mu) \ln (\lambda D)$.
\end{theorem}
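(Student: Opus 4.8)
\noindent\emph{Proof plan.} The idea is to collapse the planar system \e{eq:p_sig=1} to a scalar Riccati differential inequality for $p$. Suppose, along a solution with $p(0)>0$, that the inequality $p(t)R(v_c(t))-Q(v_c(t))\ge\lambda p(t)+\mu$ is maintained; then by \e{eq:p_sig=1} (with $\nu=1$) and $p\ge0$, $\dot p=p(pR-Q)\ge\lambda p^2+\mu p$, so the comparison principle gives $p(t)\ge\phi(t)$, where $\phi$ solves $\dot\phi=\lambda\phi^2+\mu\phi$, $\phi(0)=p(0)$. Integrating (e.g.\ via $w=1/\phi$, $\dot w=-\mu w-\lambda$) yields $\phi(t)=\mu D e^{\mu t}/(1-\lambda D e^{\mu t})$ with $D=p(0)/(\lambda p(0)+\mu)>0$ and $\lambda D<1$, which is precisely the right-hand side of \e{eq:p_bound}; it increases and diverges as $t\uparrow t^{*}:=-(1/\mu)\ln(\lambda D)>0$. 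Hence $p$ becomes infinite by time $t^{*}$, and by \e{a0.5sigma0_periodic_solution_norms_real_p} (valid since $0<v_c<\sqrt{1-2\lambda}<1$) the norms $\|\omega\|_{L^2}$ and $\|\omega\|_{B_0}$ blow up as well. Thus the whole content of the theorem is that $\Omega_b$ is forward invariant for \e{eq:p_sig=1}, because $\Omega_b$ is exactly the part of the half-strip $0<v_c<\sqrt{1-2\lambda}$ on which $pR-Q\ge\lambda p+\mu$ holds. (Here one uses that $\sqrt{1-2\lambda}<-\lambda+\sqrt{1+\lambda^{2}}$, so $R(v_c)-\lambda>0$ throughout the strip and $p_b$ is finite and positive there.)

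For invariance I would run a Nagumo-type barrier argument with two ingredients. First, $v_c$ cannot leave the strip through $v_c=\sqrt{1-2\lambda}$: there $p_b(v_c)$ is large and $R-\tfrac{1}{2}p_bQ<0$, hence $K=v_c\big(R-\tfrac{1}{2}pQ\big)<0$ for every $p\ge p_b(v_c)$ in a left neighborhood of $\sqrt{1-2\lambda}$, so $v_c$ is strictly decreasing there. (Exit through $v_c=0^{+}$ is permitted, but that is itself a singularity, and since $p\ge p_b\to1$ as $v_c\to0^+$, \e{a0.5sigma0_periodic_solution_norms_real_p} again forces the norms to blow up, consistent with the conclusion.) Second, $p$ cannot drop below the curve $p=p_b(v_c)$: writing $g(t):=p(t)(R(v_c(t))-\lambda)-Q(v_c(t))-\mu$, so that $\{g\ge0\}$ coincides with $\Omega_b$ in the strip, it suffices to show $\dot g>0$ whenever $g=0$. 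Differentiating $g$ and using the elementary identities $Q-R=v_c$, $Q+R=1/v_c$ (so $Q^{2}-R^{2}=1$), $R'(v_c)=-Q/v_c$, $Q'(v_c)=-R/v_c$, together with $K=v_c\big(R-\tfrac{1}{2}pQ\big)$ and, on $\{g=0\}$, $pR-Q=\lambda p+\mu$ (whence $p=p_b$ and $\dot p=p_b(\lambda p_b+\mu)$), everything should collapse to
\begin{equation}\nonumber
\dot g|_{g=0}=(\lambda p_b+\mu)(Q+\mu)+\Big(R-\tfrac{1}{2}p_bQ\Big)\big(R-p_bQ\big),
\end{equation}
which, after substituting $p_b=(Q+\mu)/(R-\lambda)$, is a function of $v_c$ alone.

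The required positivity of this last expression for all $0<v_c<\sqrt{1-2\lambda}$ is the main obstacle. Clearing denominators reduces it to a polynomial inequality in $v_c$, and the specific constants $\lambda=1/100$, $\mu=1/5$ are evidently chosen so that this polynomial is positive on the interval with comfortable margin; as a sanity check, the small-$v_c$ asymptotics $p_b\to1$, $Q\approx R\approx 1/(2v_c)$ give $\dot g|_{g=0}\sim(\lambda+\mu)/(4v_c)>0$. Verifying the inequality on the whole interval is elementary but delicate, and may call for a computer-assisted check. Granting it, $\Omega_b$ is forward invariant, so $\dot p\ge\lambda p^2+\mu p$ on the maximal interval of existence $[0,T_{\max})$, whence $p(t)\ge\phi(t)$ there and therefore $T_{\max}\le t^{*}$. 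At $T_{\max}$ the solution ceases to exist; since $v_c$ is trapped away from $\infty$, this is because either $p\to\infty$ or $v_c\to0^{+}$, and in both cases \e{a0.5sigma0_periodic_solution_norms_real_p} yields $\|\omega\|_{L^2},\|\omega\|_{B_0}\to\infty$. Finally, $t_c<t^{*}$ strictly: since $\dot g|_{g=0}>0$, a trajectory starting on the boundary $p(0)=p_b(v_c(0))$ enters $\{p>p_b(v_c)\}$ immediately and stays there, where $pR-Q>\lambda p+\mu$ strictly; restarting the Riccati comparison from any $t_1>0$ with $p(t_1)>\phi(t_1)$ gives a barrier whose blow-up time $t_1-(1/\mu)\ln\!\big(\lambda p(t_1)/(\lambda p(t_1)+\mu)\big)$ is strictly less than $t^{*}$, because this quantity is strictly decreasing in $p(t_1)$.
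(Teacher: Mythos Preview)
Your approach is correct and coincides with the paper's: both establish forward invariance of $\Omega_b$ by checking that the vector field points inward along the boundary curve $p=p_b(v_c)$ (your $\dot g|_{g=0}$ equals $(R-\lambda)\,T(v_c,p_b(v_c))$ in the paper's notation, so the two conditions are identical up to the positive factor $R-\lambda$), and then integrate the resulting Riccati inequality $\dot p>\lambda p^2+\mu p$. The paper handles the positivity check by splitting at the $K$-nullcline intersection $v_c=d\approx0.3734$---analytically for $v_c>d$ and graphically (Figure~6) for $v_c\le d$---whereas you derive a single explicit formula for $\dot g|_{g=0}$ and correctly flag that its sign may require a numerical check; you are also more careful than the paper about the right boundary $v_c=\sqrt{1-2\lambda}$ and about why the blow-up time is strictly less than $t^{*}$.
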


\begin{remark}
It follows from the symmetry (\ref{eq:symm}) that $p(t) \rightarrow -\infty$ in finite time for $p(0)<-\frac{Q(v_c^{-1}(0))+\mu}{R(v_c^{-1}(0))-\lambda}<0$ and
$v_c(0)>(1+2 \lambda)^{-1/2}$  in the lower \cblue{right quarter} of $(v_c, p)$ space (see Figure \ref{fig:phaseplane_sig=1}).
\end{remark}

\noindent
 {\it Proof}. We have that  $f_J(v_c)< p_b(v_c) <  \infty$ when $0<v_c<\sqrt{1-2 \lambda}$,  and thus the curve $p_b$ lies above the upper $J$-nullcline.
 Hence $dp/dt>0$ on the boundary of $\Omega_b$. The unique  intersection of   $p_b(v_c)$  with the $K$-nullcline occurs at $v_c = d \simeq 0.3734$. Hence both $dv_c/dt$ and $S$ evaluated on the boundary of $\Omega_b$ are negative  when $v_c>d$, and  it follows that  trajectories are transverse to this part of the boundary and directed into $\Omega_b$.  When $0<v_c \leq d$, transversality of the trajectories is graphically verified. This is done in Figure \ref{fig:blowup}  which  shows that  $T(v_c,p_b(v_c))>0 $ for $0<v_c \leq d$ in the particular case of   $\lambda=1/100$ and $\mu=1/5$.  Since the definition (\ref{eq:T}) uses the upward pointing normal, this implies the trajectories are directed into $\Omega_b$.

The above  arguments  show that $\Omega_b$  is an invariant subset under the dynamics given by (\ref{eq:p_sig=1}). Since  (\ref{blow_up_region}) is satisfied at all $(v_c,p)$  in $\Omega_b$, it follows from the invariance of $\Omega_b$ that  this inequality holds for all $t>0$ if it holds at $t=0 $.  Therefore from (\ref{eq:p_sig=1})  and (\ref{blow_up_region})  we have that
 \[
 \frac{dp}{dt} > p(\lambda p+\mu)~\mbox{for}~ t>0, ~\mbox{if}~ (v_c(0),p(0)) \in \Omega_b.
 \]
 Integrating the above inequality with initial data $p(0)$ gives (\ref{eq:p_bound}). Finite-time blowup of the $L^2$ and Wiener norms follows from (\ref{a0.5sigma0_periodic_solution_norms_real_p}).
  $\square$

  \cblue{Similarity exponents are computed by Taylor expanding (\ref{eq:p_sig=1}) for  $t$ near $t_c$, and expressing the result in terms of $\omega_{-2,i}$ using (\ref{eq:p_substitution}). We find  $v_c \sim (t_c-t)^{1/3}$, $\omega_{-2,i} \sim (t_c-t)^{-1/3}$ in type A blowup, and $v_c \sim (t_c-t)^{1/3}$, $\omega_{-2,i} \sim (t_c-t)^{-5/3}$ in type B blowup.   Consequently,  equation (\ref{a0.5sigma0_periodic_solution_norms_real}) implies that for both types of blowup,  $\| \omega(\cdot,t) \|_{L^2} \sim (t_c-t)^{-5/6}$ and $\| \omega(\cdot,t) \|_{B_0} \sim (t_c-t)^{-1}$  as $t \rightarrow t_c^-$. The corresponding similarity parameters (cf. (\ref{self-similar1})) are $\alpha=1/3$ and $\beta=1$.}

 \begin{figure}[h]
\centering
\includegraphics[width=0.6\textwidth]{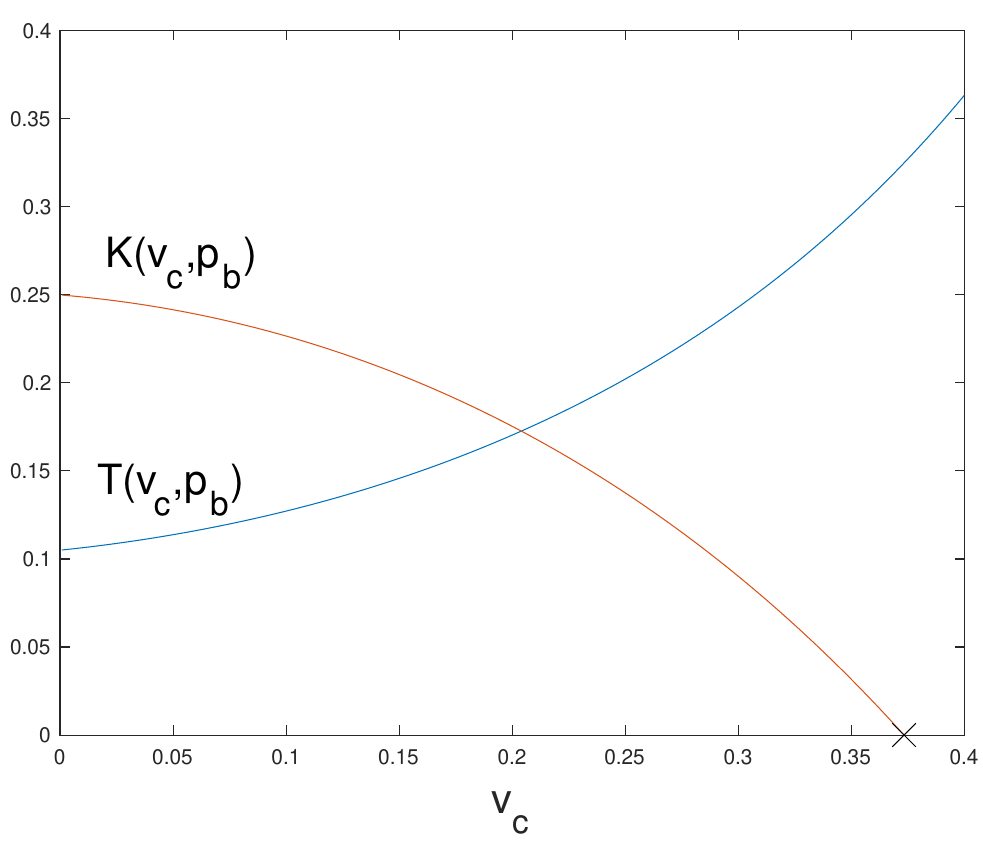}
\caption{Plot showing $T(v_c,p_b)>0$ for $0 < v_c \leq d $, where $d \simeq 0.3734$ (shown by `$\times$') is the zero of $K(v_c,p_b)$. When $v_c>d$,  the trajectories and the boundary of $\Omega_b$ are oppositely sloped, hence transverse.  }
\label{fig:blowup}
\end{figure}




\subsubsection{Numerical solution of (\ref{eq:p_sig=1})}

\begin{figure}[h!]
\centering
\includegraphics[width=0.8\textwidth]{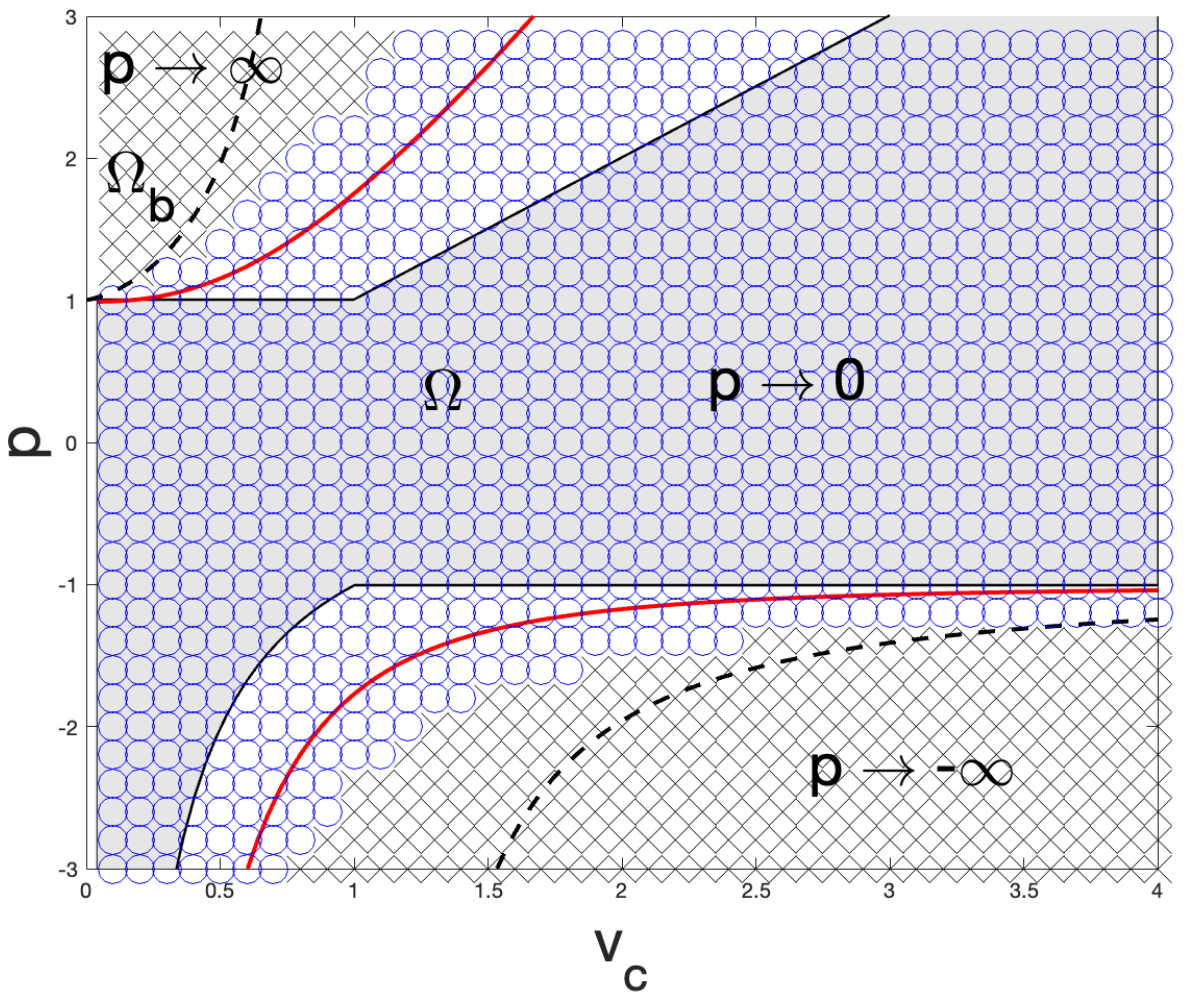}
\caption{Comparison of numerical solution to (\ref{eq:p_sig=1}) with the phase-plane analysis. Circles and $\times$ markers  are numerically determined regions of global existence and blowup, respectively. The global existence region   $ \Omega$ is shaded, the  red curves demarcate a larger region of global existence obtained by a more detailed analysis,  and dashed curves delineate the analytically determined blowup regions.  }
 \label{fig:pode}
\end{figure}

Figure \ref{fig:pode} compares the numerical solution to (\ref{eq:p_sig=1}) with the phase-plane analysis. The blue circles denote initial data $(v_c(0),p(0))$ for which the numerical solution appears to exist globally in time.  This data is consistent with the global existence region $\Omega$ from the analysis of Section \ref{sec:global} (shaded), as well as a larger such region demarcated by red curves, determined from a more detailed analysis in the spirit of the remark following Theorem \ref{thm:global_sig=1}. The `$\times$' markers denote initial data for which the numerical solution blows up in finite time, which is  consistent with the   analytically determined  blowup regions  indicated by dashed curves

We show these results in terms of  the original variables $\omega$ and $v_c$ in Figure \ref{fig:omega}.  The numerical solution is computed from  (\ref{a0.5sigma1_periodic_eqns}),  and gives a `global existence' region that
is consistent with the analysis.
Analytically determined blowup regions  are delineated by dashed curves and are also consistent with the numerics.   The blowup   region in the lower half plane of $\omega_{-2,i}$ in Figure \ref{fig:omega} corresponds to  `$\times$' markers  in  Figure \ref{fig:pode} that lie between  solid and dashed curves. These are blow-up points that are  not classified by the phase-plane analysis.

Note that  in Figure \ref{fig:omega} the upper boundary of the  gray-shaded region tends to zero   as $v_c \rightarrow 0$, linearly in $v_c$, and
both  upper and lower boundaries  tend to zero as $v_c \rightarrow 1$, linearly in $1-v_c$.  However, the $L^2$ and $B_0$  norms of $\omega$ (cf. (\ref{a0.5sigma0_periodic_solution_norms_real_p})) are bounded away from zero on these boundaries. In other words,  $\| \omega \|_{L^2}$  and $\| \omega \|_{B_0}$ are  large outside of the gray-shaded region, implying that  $\| \omega \|_{L^2},  \| \omega \|_{B_0} \rightarrow \infty$ in finite time only for sufficiently large data.

 \begin{figure}[h]
\centering
\includegraphics[width=0.8\textwidth]{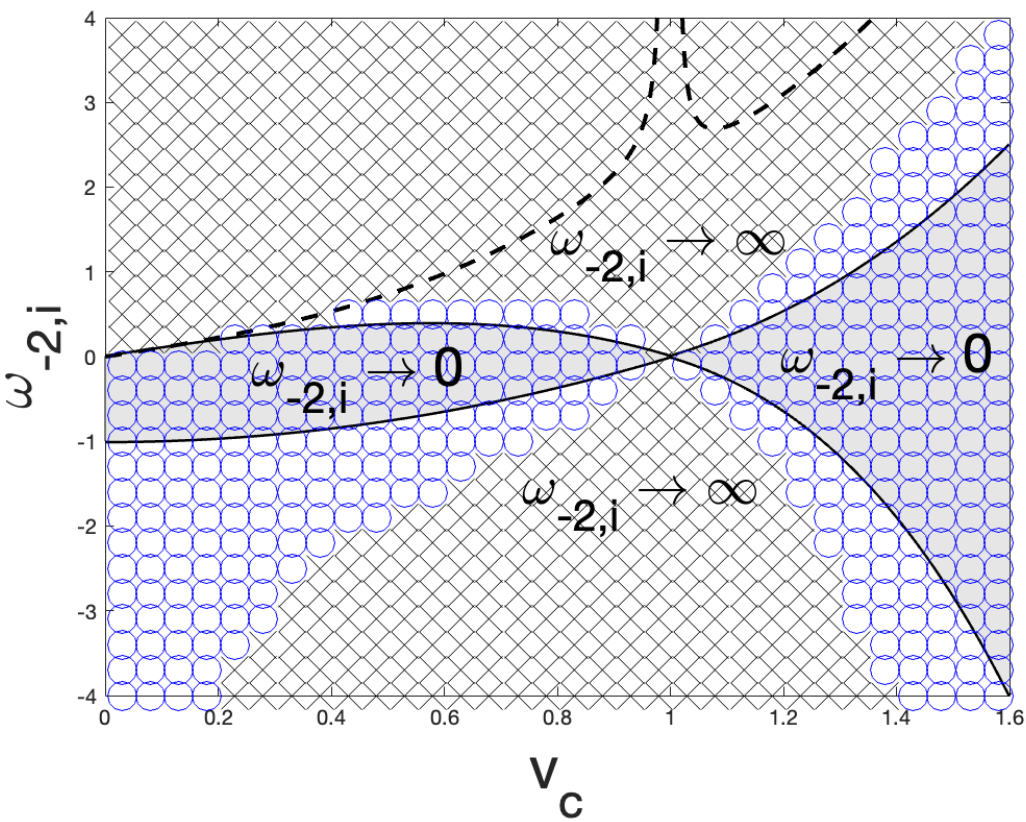}
\caption{Numerical solution  in  the original variables $\omega_{-2,i}$ and $v_c$, plotted as in Figure \ref{fig:pode} and  compared with the phase-plane analysis.  The global existence region corresponding to $\Omega$ in Figure \ref{fig:phaseplane_sig=1}  is  shaded; analytically determined blowup regions are delineated by dashed curves.   }
 \label{fig:omega}
\end{figure}

\subsubsection{Limit of the real line }
\label{sec:limitreallinesigma2}

\cblue{As in Section \ref{sec:limitreallinesigma0}, pole dynamics equations for the problem on the real line are obtained in the limit $v_c\to 0$, for which the influence of the $2\pi$ periodicity is negligible. In this case, Laurent series expansion of  the right-hand sides of   \e{a0.5sigma1_periodic_eqns} about $v_c=0,$ keeping only the leading order term
$\propto v_c^{-2}$  in the first ODE and $\propto v_c^{-1}$ and $\propto v_c^0$ in the second ODE results in the reduction
 %
\begin{equation}\label{reductiona0.5sigma1}
\frac{d \omega_{-2,i}}{dt}=  \omega_{-2,i}^2\frac{1}{4v_c^2},  \qquad
\frac{dv_c}{dt} =-\omega_{-2,i}\frac{1}{4v_c}+\frac{\nu}{2},
\end{equation}
which recovers  (50) and (51) of \cite{AmbroseLushnikovSiegelSilantyev}  with $\omega_{-2,i}$ replaced by $ \omega_{-2}/4$ and $v_c$ replaced by $v_c/2$ to match the definitions of \cite{AmbroseLushnikovSiegelSilantyev}. This replacement can be understood as discussed in Section \ref{sec:limitreallinesigma0}.
The velocity $u(x,t)$, given by  \e{uap1p2sigma0reduction},
recovers (47) of   \cite{AmbroseLushnikovSiegelSilantyev} for $q(t)\equiv 0.$
}

\cblue{If we similarly  perform  Laurent series expansion of the  right-hand sides of the more general ODEs \e{vcODEa1p2sigma1} and \e{omm2ODEa1p2sigma1}, then we obtain new exact solutions of \e{mainEquationa0.5sigma1_periodic} on the real line in the form \e{omegam2reduction}, \e{uap1p2sigma0reduction}
with
\begin{equation}\label{vcODEa1p2sigma1reduction}
\frac{d\omega_{-2}}{dt} =\I \omega_{av}(t)\omega_{-2}
+\frac{ \I|\omega_{-2}|^2 }{( v_c+\bar  v_c)^2}
, \quad \frac{dv_c}{dt} =-\frac{\I  \bar \omega_{-2}}{2  ( v_c+\bar  v_c)}+\frac{\nu}{2}-\frac{\I q(t)}{4}, \quad \omega_{av}(t)=\omega_{av}(0),
\end{equation}
where we have also used \e{ca1p2sigma1}.
This can be also derived by  direct substitution of \e{omegam2reduction}, \e{uap1p2sigma0reduction} and \e{vcODEa1p2sigma1reduction} into \e{mainEquationa0.5sigma1_periodic}.
}

\section{Conclusions} \label{sec:Conclusions}

Exact pole dynamics solutions have been presented  for the generalized Constantin-Lax-Majda equations with dissipation  on a periodic domain. The solutions are obtained  for advection parameters $a=0$ and $1/2$ and  dissipation parameters $\sigma=0$ and $1$, for which there is a balance of singular terms.  When  $a=0$, the solutions  consist of a periodic array of $N$ complex conjugate simple pole singularities with time-dependent positions and amplitudes.   Closed form solutions for the singularity positions and amplitudes are obtained in the case of a single (periodically repeated) c.c. pair of singularities, which enables a rather complete analysis of finite-time singularity formation.   We find finite-time blowup occurs  for arbitrarily small initial data $\omega(x,0)$ in the $L^2$ norm when $\sigma=0$, but only  for sufficiently large data when $\sigma=1$.
In the case of the Wiener norm, the exact solutions exhibit finite-time singularity formation only for sufficiently large data for both $\sigma=0$ and $1$. This result motivates  revisiting the well-posedness theory for the Wiener algebra presented in \cite{AmbroseLushnikovSiegelSilantyev}, which there applies for $\sigma \geq 1$.
We find that $a=0$ is a special case in which the $B_0$ theory in the periodic problem  can be extended   to $\sigma \geq 0$, for which we prove global well-posedness  for  small initial data (our proof also applies to the real-line problem for $\sigma=0$).  
Crucially, the  pole dynamics solutions  show that the small data assumption in the global existence theory is necessary, as finite time  blowup provably occurs for sufficiently large data.

In the case $a=1/2$, solutions are constructed as a sum of  c.c. simple and double pole singularities which enables a problematic $\log$ singularity to be cancelled out.
The analysis is more delicate in this case.
We find an implicit analytical solution for $\sigma=0$,  and a detailed analysis of this solution   gives a complete picture of singularity formation.  When $\sigma=1$ closed form analytical solutions are not available, and we resort to a phase-plane analysis using special variables to obtain useful information about singularity formation.  The results are similar to $a=0$: we find finite-time blowup  for arbitrarily small initial data in the $L^2$ norm when $\sigma=0$, but  global existence for small data when $\sigma=1$. There is global existence for sufficiently small data in the  Wiener norm for $\sigma=0$ and $1$. In both norms we prove that there is collapse for sufficiently large data when $\sigma=0$ and $1$. \cblue{A summary of our results is given in Table 1.}

\renewcommand{\figurename}{Table}
\renewcommand\thefigure{1}

 \begin{figure}[h]
\centering
\includegraphics[width=0.8\textwidth]{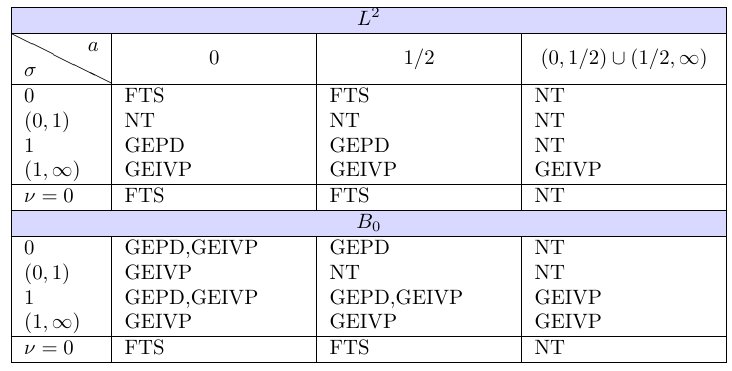}
\caption{\cblue{Summary of our results on global existence versus finite-time singularity formation in $L^2$ and $B_0$. FTS = finite-time singularity formation for arbitrarily small data; GEPD = global existence of pole dynamics solutions for small data; GEIVP = global existence of solutions to initial value problem for small data; NT = neither pole dynamics solutions nor our general theory applies.
The pole dynamics solutions blow up for sufficiently large data.}}
 \label{tab:summary}
\end{figure}

\renewcommand{\figurename}{Figure}
\renewcommand\thefigure{11}

When $a=0$ and $\sigma=2$, a pole dynamics solution for the problem on the real line was found by Schochet \cite{Schochet}. Remarkably,  this solution exhibits finite time blowup in both the $L^2$ and Wiener norms for {\em all} initial data.  Despite  significant effort, we have been unable to generalize this solution to the periodic domain.   However, the general theory  developed here paradoxically shows that   collapse in the Wiener algebra  occurs more readily  for the problem on the real line with $\sigma=2$ than it does \cblue{in the periodic problem} for $\sigma=0$!

In future work, we intend to numerically investigate finite-time singularity formation in the gCLM equation with dissipation  over the full range of $a$ and for more general values of $\sigma$,  both for problems on the real line and periodic domain.
Several interesting questions arise.
For example, in the special case $a=-1$ (for which the gCLM equation is equivalent to the Cordoba-Cordoba-Fontelos equation) global well-posedness in time is known for $\sigma \geq 1$ and finite-time singularity formation occurs for $\sigma<1/2$, but the behavior for $1/2 \leq \sigma<1$ is open \cite{Kiselev}. Techniques developed to study this problem can have ramifications on the question of finite-time singularity formation in the Navier-Stokes equations \cite{Wang2023}.   The uniqueness of blow-up solutions is also of interest. Huang et al. \cite{Huang2024} find that for the nondissipative problem on the real line, there are countably infinite distinct self-similar solutions which blow up in finite time.  It is natural to ask whether such nonuniqueness  also occurs in the problem with dissipation. \cblue{Similarly, it would be interesting to characterize the different scales of self-similar blowup due to the nontrivial time dependence of $x_c(t)$ (including $x'_c(t_c)=\infty$ \cite{Lushnikov_Silantyev_Siegel,Chen2021}) in the spirit of \cite{HuangTwoScale2024}}.

\section*{Acknowledgments}

We are grateful to the National Science Foundation for support through grants DMS-2307638 (DMA) and DMS-1909407 (MS).
PML would like to thank the Isaac Newton Institute for Mathematical Sciences, Cambridge (EPSRC grant EP/R014604/1) for support and hospitality during the programme ``Emergent phenomena in nonlinear dispersive waves,"
where work on this paper was partially undertaken.
Simulations were performed at the Texas Advanced Computing Center using the Extreme
Science and Engineering Discovery Environment (XSEDE), supported by NSF Grant ACI-1053575.

\section*{Author contributions}

D. A. Silantyev initiated the work and contributed to the analysis in Sections \ref{sec:a0sigma0},  \ref{sec:a0sigma1}, and \ref{sec:implicit_formula}.
P. M. Lushnikov  developed the analysis of the implicit solution in Section \ref{sec:a1p2sigma0}, as well as Section \ref{sec:limitreallinesigma2}. M. Siegel contributed to the analysis in  Sections \ref{sec:a0sigma0} and \ref{sec:a0sigma1},  developed the phase-plane analysis in Section \ref{sec:phase-plane} and  Appendix \ref{sec:AppendixA}, and implemented the numerics. D. M. Ambrose developed the theory in Sections \ref{sec:a=0_well-posedness},  \ref{sec:a=0_well-posedness_1} and contributed to Section \ref{sec:a1p2sigma0}.  All authors contributed to the analysis generally and participated in writing the manuscript.

\section*{Data availability statement}
Data sharing not applicable to this article as no datasets were generated or analyzed during the
current study.

\bibliography{lushnikov,ref_1DEuler_model,gclm,surfacewaves,biblio,biblionls}{}
\bibliographystyle{ama.bst.bst}

\appendix
\section*{Appendix}
\section{Phase-plane analysis for  ${{a=1/2, \sigma=0}}$} \label{sec:phase-plane_a=.5_sig=0} \label{sec:AppendixA}

 We provide the phase-plane analysis for the  the real-valued ODE system \e{a0.5sigma0_periodic_eqns} to complement the analysis of Section \ref{sec:a1p2sigma0}.
Again make the substitution (\ref{eq:p_substitution})
for which   (\ref{a0.5sigma0_periodic_eqns}) becomes
\begin{align}
\begin{split}
\frac{dp}{dt} &:= L(v_c,p)= p(pR-\nu), \label{eq:p_sig=0} \\
\frac{d v_c}{dt}  &:=M(v_c,p)=-\frac{p}{4} (1+v_{c}^{2}).
\end{split}
\end{align}
where $R$ is given by (\ref{eq:RQ}). These equations satisfy the  same symmetry as (\ref{eq:symm}), i.e.,
\be \label{eq:symm_sig0}
L(v_c^{-1},-p)=-L(v_c,p)~~\mbox{and}~~M(v_c^{-1},-p)=-v_c^{-2} M(v_c,p).
\ee
The direction field is
\be \nonumber 
{S}:= \frac{L(v_c,p)}{M(v_c,p)} = -\frac{4 (pR-\nu)}{1+v_c^2}.
\ee
Similar to (\ref{eq:T}), we define
\be \label{eq:sig0_T}
{T}(v_c, f(v_c)) :=  -M(v_c, f(v_c)) f'(v_c) +L(v_c, f(v_c))
\ee
where $p=f(v_c)$ is any piecewise smooth curve in the phase plane.   The $L$-nullclines are   $p=0$ and $p={f}_L(v_c)$ where
\begin{equation}\nonumber
{f}_L(v_c) =  \frac{2 v_c}{1-v_c^2}.
\end{equation}
The quantity $|p|$ increases in time when $|p|>|{f}_L|$, and in Theorem \ref{thm:blowup_sig0} we find sets of initial data where $|p| \rightarrow \infty$ in finite time. We first show, however, that there is an invariant region ${\Omega}$ where the solution exists globally in time.

We define ${\Omega}$ to be the region  in the $p$ versus $v_c$ phase plane that is  bounded above and below  by the curves
\begin{align} \label{eq:q1q2}
\begin{split}
\qone&=  v_c,\\
\qtwo&=- v_c^{-1},
\end{split}
\end{align}
for $0<v_c<\infty$.
 Note that $q_2(v_c)$ is obtained from $q_1(v_c)$ using the symmetry (\ref{eq:symm_sig0}).  Figure \ref{fig:sig0_phase_plane}  illustrates the domain ${\Omega}$, L-nullclines  and  direction field ${S}$.   Taking $\nu=1$, we have:
\begin{theorem} {\bf Global existence for ${a=1/2}$, ${\sigma=0}$}. \label{thm:global_sig=0}
For all
 initial data $(v_c(0),p(0))$  in ${\Omega}$ and all $t>0$,  $(v_c(t),p(t)) \in {\Omega}$.  Furthermore, $v_c(t)$ cannot equal zero or infinity in finite time (although it can tend to zero or  infinity in infinite time,  while simultaneously $p(t) \rightarrow 0$ and $\| \omega \|_{L^2} \rightarrow 0$).
 Hence the solution $\omega(x,t)$  exists and is analytic for all $0< t<\infty$.
 \end{theorem}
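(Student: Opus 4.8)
The plan is to follow closely the proof of Theorem~\ref{thm:global_sig=1}, using that the boundary curves of $\Omega$ are now elementary and that, by the symmetry \eqref{eq:symm_sig0} (under which $q_2(v_c)=-q_1(v_c^{-1})$), it suffices to treat the upper boundary $q_1(v_c)=v_c$. First I would show that $\Omega$ is forward invariant by verifying that the field $(M,L)$ of \eqref{eq:p_sig=0} (with $\nu=1$) points into $\Omega$ along $q_1$. Since $v_cR=\tfrac12(1-v_c^2)$, a direct computation gives $M(v_c,v_c)=-\tfrac{v_c}{4}(1+v_c^2)$ and $L(v_c,v_c)=v_c(v_cR-1)=-\tfrac{v_c}{2}(1+v_c^2)$, so $dp/dt<0$ on the upper boundary, and, from \eqref{eq:sig0_T},
\[
T\big(v_c,q_1(v_c)\big)=-M(v_c,v_c)\,q_1'(v_c)+L(v_c,v_c)=-\tfrac{v_c}{4}(1+v_c^2)<0
\]
for all $v_c>0$. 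Hence trajectories cross $q_1$ transversally and are directed into $\Omega$; the same holds on $q_2$ by \eqref{eq:symm_sig0}. This step is considerably simpler than in the $\sigma=1$ case, which needed a one-parameter family of boundary curves and a polynomial-positivity argument.

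The genuinely new point is that $\Omega$ is unbounded in $v_c$, so the analogue of property~(iii) in the earlier proof fails and $v_c$ may drift toward $0$ or $\infty$; I would prove directly that this cannot occur in finite time. On $\Omega$ one has $-v_c^{-1}<p<v_c$, and since $\{p=0\}$ is invariant the sign of $p$ is preserved. If $p(0)>0$ then $\dot v_c=M(v_c,p)<0$, so $v_c$ decreases, and once $v_c\le1$ the bound $\dot v_c\ge-\tfrac{v_c}{4}(1+v_c^2)\ge-\tfrac{v_c}{2}$ with Gronwall's inequality gives $v_c(t)\ge v_c(t_0)e^{-(t-t_0)/2}>0$, so $v_c$ never reaches $0$ in finite time. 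If $p(0)<0$ then $\dot v_c>0$, and using $|p|<v_c^{-1}$ the substitution $w=v_c^2$ gives $\dot w=2v_c\dot v_c<\tfrac12(1+w)$, so $w$ and hence $v_c$ stay finite for all finite $t$. Together with $|p|<\max(v_c,v_c^{-1})$, it follows that $(v_c(t),p(t))$ remains in a compact subset of $\{v_c>0,\ p\in\mathbb{R}\}$ over each finite time interval, whence it exists for all $t>0$ with $v_c(t)\in(0,\infty)$. Then $\omega_{-1}(t)=-2p(t)v_c^2(t)$ and $\omega_{-2}(t)=\mathrm{i}\,p(t)v_c(t)\big(1-v_c^2(t)\big)$ (from \eqref{omegaa0.5sigma0_periodic_lower_1} and \eqref{eq:p_substitution}) are finite, and $\tan(x/2)=\pm\mathrm{i}v_c(t)$ has no real root, so $\omega(\cdot,t)$ is analytic in a neighbourhood of the real line for every $t>0$.

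For the long-time behavior I would note that, again using $|p|<\max(v_c,v_c^{-1})$ and the formula for $R$, one has $pR-1<0$ on $\Omega\cap\{p>0\}$ and $pR-1>0$ on $\Omega\cap\{p<0\}$, so $|p(t)|$ is monotone decreasing with limit $p_\infty\ge0$; if $p_\infty>0$ (say $p(0)>0$, the other sign being symmetric) then $p(t)\ge p_\infty$ forces $\dot v_c\le-\tfrac{p_\infty}{4}$, driving $v_c$ to $0$ in finite time and contradicting the previous paragraph, so $p(t)\to0$. Finally, from \eqref{a0.5sigma0_periodic_solution_norms_real_p} one has $\|\omega(\cdot,t)\|_{L^2}^2=2\pi\big(p^2/v_c+p^2v_c\big)$; for $p>0$ the bounds $p/v_c<1$ and $v_c\le v_c(0)$ give $p^2/v_c<p\to0$ and $p^2v_c\le p^2v_c(0)\to0$, while for $p<0$ the bound $|p|<v_c^{-1}$ gives $p^2v_c<|p|\to0$ and $p^2/v_c<|p|/v_c(0)^2\to0$ (using $v_c\ge v_c(0)$); hence $\|\omega(\cdot,t)\|_{L^2}\to0$, and the case $p(0)<0$ follows from \eqref{eq:symm_sig0} ($p(0)=0$ being trivial). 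The main obstacle is not the invariance of $\Omega$, which is elementary here, but the degenerate limits $v_c\to0,\infty$: ruling them out in finite time via these comparison estimates, and showing they are harmless for $\omega$ (which still decays in $L^2$) when they occur as $t\to\infty$ — a scenario that does arise, as quantified by the boundary-case solutions \eqref{vccriticalA}--\eqref{vccriticalB}.
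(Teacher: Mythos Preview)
Your proof is correct and follows essentially the same approach as the paper's: compute $T(v_c,q_1(v_c))=-\tfrac{v_c}{4}(1+v_c^2)<0$ to show invariance of $\Omega$, invoke the symmetry \eqref{eq:symm_sig0} for the lower boundary, and then argue separately that $v_c$ cannot reach $0$ or $\infty$ in finite time. The paper's argument for the last step is terser (it just writes $dv_c/dt\sim -v_c/4$ for $v_c\ll1$ and invokes the analogous statement for $v_c\gg1$), whereas you make this rigorous via explicit Gronwall-type comparison inequalities, and you also supply the contradiction argument showing $p_\infty=0$ and the quantitative bounds establishing $\|\omega\|_{L^2}\to0$, which the paper leaves implicit.
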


 \noindent
 {\it Proof}.
 Substitution of (\ref{eq:q1q2}) into (\ref{eq:p_sig=0}) and (\ref{eq:sig0_T})  shows that
 $dp/dt=-v_c(1+v_c^2)$ and ${T}(v_c,q_1(v_c))=-v_c(1+v_c^2)/4$ on $p=q_1(v_c)$.  Hence phase trajectories  are transverse to the upper  boundary of  ${\Omega}$ and are directed into ${\Omega}$ (in view of   the symmetry (\ref{eq:symm_sig0}) we only need  consider the upper and left boundaries of ${\Omega}$).  When  $v_c \ll 1$, we have from (\ref{eq:p_sig=0}) that $dv_c/dt$ is positive when $p<0$, so that trajectories in the lower half phase-plane are directed  away from the $v_c=0$ axis. However, phase trajectories are directed toward this axis when $p>0$.   In this case $dv_c/dt \sim -v_c/4$ for $v_c \ll 1$, and hence  $v_c$ can tend to zero in infinite time, while simultaneously $p \rightarrow 0$  linearly in $v_c$ (cf. Figure \ref{fig:sig0_phase_plane}) and from (\ref{a0.5sigma0_periodic_solution_norms_real_p}) $\| \omega \|_{L^2} \rightarrow 0$. A similar argument applies for $v_c \gg 1$, in which case $v_c$ can tend to infinity in infinite time when $p<0$.  $\square$

We next show that $p$ blows up in finite time for initial data in the set ${\Omega}_{b}$ shown in Figure \ref{fig:sig0_phase_plane}.

\begin{figure}[h!]
  \centering
    \includegraphics[width=0.7 \textwidth]{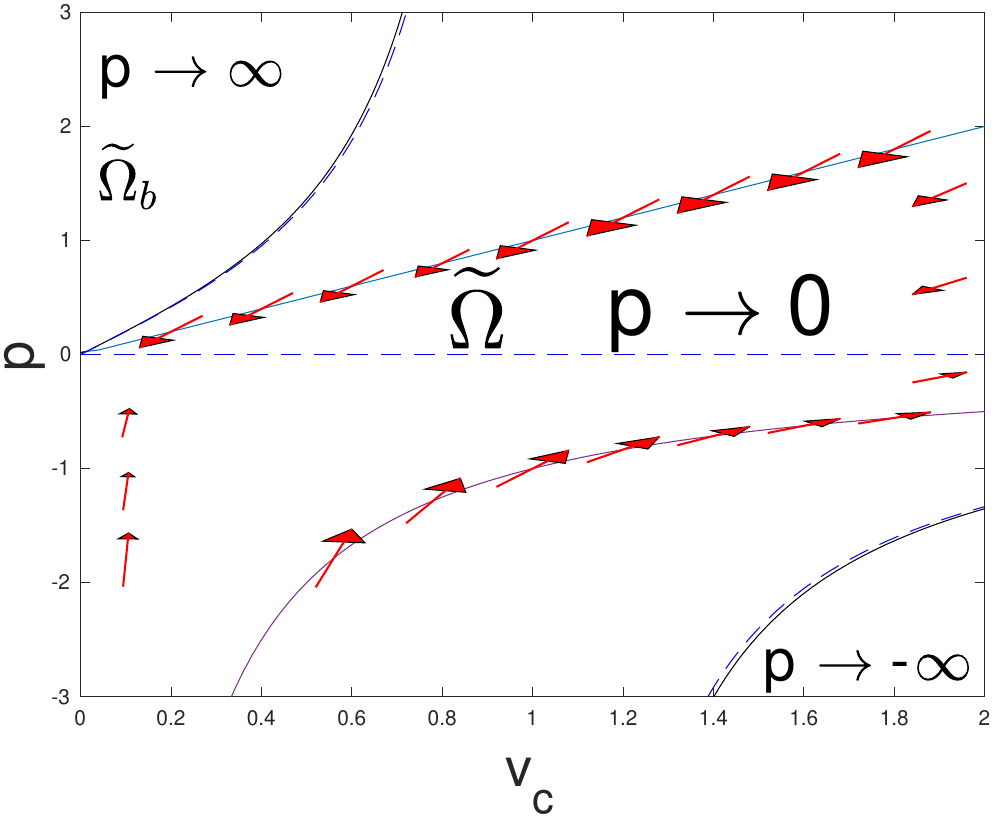}
    \caption{Phase plane  for the system (\ref{eq:p_sig=0}).  Dashed lines are $L$-nullclines.  A global existence region ${\Omega}$ is shown with the direction field ${S}$ overlaid on its boundary.  A blow-up region  ${\Omega}_b$ is bounded by  a solid curve. There is another blowup region at lower right.}
  \label{fig:sig0_phase_plane}
\end{figure}

\begin{theorem} \label{thm:blowup_sig0} {\bf Finite-time singularity formation for ${a=1/2}$, ${\sigma=0}$.}

\noindent
 Let $0< v_c(0)<1$ and
  $0<\cblue{\rho}<R(v_c(0))$ be given,  and define ${\Omega}_b$ be the region of the upper-half phase plane lying above the curve
  \be  \nonumber
  {p}_{b}(v_c)=\frac{1}{R(v_c)-\cblue{\rho}}.
  \ee
 Then for $(v_c(0),p(0)) \in {\Omega}_b$, we have $p(t) \geq p(0)/(1 - \cblue{\rho} p(0) t)$ and $v_c(t)<v_c(0)$ (i.e., ${\Omega}_b$ is invariant under (\ref{eq:p_sig=0})). Thus  $p(t) \rightarrow \infty$, $\| \omega \|_{L^2} \rightarrow \infty$, and $\| \omega \|_{B_0} \rightarrow \infty$ at  $t=t_c$ where $t_c \leq 1/(p(0) \cblue{\rho}$).

\noindent

\end{theorem}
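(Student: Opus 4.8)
The plan is to follow the template of the proof of Theorem~\ref{thm:blowup}: first show that $\Omega_b$ is an invariant region for the flow of \e{eq:p_sig=0} (with $\nu=1$), then inside $\Omega_b$ extract a Riccati-type differential inequality for $p$ and integrate it to obtain finite-time blowup of $p$, and hence of the relevant norms.

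I would begin by recording the elementary properties of $p_b$. Since $R'(v_c)=-\tfrac12(v_c^{-2}+1)<0$, the function $R$ is strictly decreasing; together with the hypothesis $R(v_c(0))>\rho$ this gives $R(v_c)-\rho>0$ for all $0<v_c\le v_c(0)$, so $p_b(v_c)=1/(R(v_c)-\rho)$ is well defined and positive there, with $p_b'(v_c)=-p_b^2R'(v_c)>0$ (so $p_b$ is increasing on this range).

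Next I would prove invariance of $\Omega_b$ by checking its two boundary pieces. Throughout $\Omega_b$ we have $p>0$, so from the second equation of \e{eq:p_sig=0}, $dv_c/dt=-\tfrac{p}{4}(1+v_c^2)<0$; hence $v_c(t)$ is strictly decreasing, which both gives the claimed $v_c(t)<v_c(0)$ and prevents escape through increasing $v_c$. On the curve $p=p_b(v_c)$ one has $p_b(R-\rho)=1$, hence $L(v_c,p_b)=p_b(p_bR-1)=\rho p_b^2>0$, and therefore, using the definition \e{eq:sig0_T}, $T(v_c,p_b)=-M(v_c,p_b)\,p_b'(v_c)+L(v_c,p_b)=\tfrac{p_b}{4}(1+v_c^2)\,p_b'(v_c)+\rho p_b^2>0$, both summands being positive. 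Since $T$ is the component of the vector field along the upward normal to $p=p_b(v_c)$ and $\Omega_b$ lies above this curve, trajectories cross the lower boundary into $\Omega_b$. Thus $(v_c(t),p(t))\in\Omega_b$ for as long as the solution exists.

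Finally I would exploit the invariance. In $\Omega_b$ we have $p\ge p_b(v_c)$, i.e.\ $p(R(v_c)-\rho)\ge 1$, equivalently $pR(v_c)-1\ge\rho p$; substituting into the first equation of \e{eq:p_sig=0} gives $dp/dt=p(pR-1)\ge\rho p^2$. Since $p(0)>0$, dividing by $p^2$ and integrating from $0$ to $t$ yields $1/p(0)-\rho t\ge 1/p(t)>0$, i.e.\ $p(t)\ge p(0)/(1-\rho p(0)t)$. The right-hand side diverges as $t\to(p(0)\rho)^{-1}$, so the solution cannot be continued past some $t_c\le 1/(p(0)\rho)$, at which $p\to\infty$. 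Because $v_c(t)<v_c(0)<1$, the norm formulas \e{a0.5sigma0_periodic_solution_norms_real_p} give $\|\omega(\cdot,t)\|_{L^2}^2=2\pi p^2(v_c^{-1}+v_c)\ge4\pi p^2$ and $\|\omega(\cdot,t)\|_{B_0}=2p/v_c\ge2p$, both $\to\infty$ as $t\to t_c^-$; the statement for $p<0$ follows from the symmetry \e{eq:symm_sig0}. The main obstacle is the invariance of $\Omega_b$ — specifically the sign of $T$ on $p=p_b(v_c)$ — but here the monotonicity of $R$ makes $p_b$ increasing, so the two contributions to $T$ are automatically positive and no graphical verification (as was needed for part of the boundary in Theorem~\ref{thm:blowup}) is required. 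A minor subtlety is that $v_c$ may reach $0$ before the $p$-blowup time, but this only makes $\|\omega\|_{L^2}$ larger, and in either case the singularity forms at some $t_c\le 1/(p(0)\rho)$.
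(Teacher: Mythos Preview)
Your proof is correct and follows essentially the same route as the paper: establish that $\Omega_b$ is invariant (using $dv_c/dt<0$ and the monotonicity of $R$), then extract the Riccati inequality $dp/dt\ge\rho p^2$ from $pR-1\ge\rho p$ and integrate. Your version is slightly more explicit in computing $T(v_c,p_b)$ and noting $p_b'>0$, whereas the paper argues directly that $p$ is increasing while $p_b(v_c(t))$ is decreasing along trajectories, but the content is the same.
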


 \begin{remark}
It follows from the symmetry (\ref{eq:symm_sig0}) that  for $1< v_c(0)<\infty$ and  $p(0)<-{p}_{b}(1/v_c(0))<0$,  we have $p(t) \leq p(0)/(1-\cblue{\rho} | p(0)| t)$ where $0<\cblue{\rho}<|R(v_c(0))|$.   For this initial data,
$p(t) \rightarrow -\infty$ at  $t=t_c$ where $t_c \leq 1/(|p(0)| \cblue{\rho}$).
\end{remark}

\noindent
 {\it Proof}.  By definition of ${p}_{b}$  we have that
 \be \label{eq:sig0_blowup_1}
 p(t) > \frac{1}{R(v_c(t))-\cblue{\rho}}
 \ee
 at $t=0$, since by assumption $p(0)>{p}_{b}(0)$. Assume (\ref{eq:sig0_blowup_1})  is satisfied up to  time $t=T$.
 Since $dR/dv_c<0$
 for $0<v_c<1$, and also  from (\ref{eq:p_sig=0}) $v_c$ is a decreasing function of $t$, it follows that $R>\cblue{\rho}$ is increasing in $t$.   Furthermore from (\ref{eq:p_sig=0}) and  (\ref{eq:sig0_blowup_1})  $dp/dt>0$ at the time $T$.  Hence (\ref{eq:sig0_blowup_1}) holds for $t>T$ and by extension for all $t$.  Rewriting (\ref{eq:sig0_blowup_1}) as $pR-1>\cblue{\rho}p$ and substituting into (\ref{eq:p_sig=0})   implies  $dp/dt > \cblue{\rho} p^2$. Integrating this inequality gives the  result on $p(t)$, with the blowup of  the norms following from (\ref{a0.5sigma0_periodic_solution_norms_real_p}).  $\square$

 Theorem \ref{thm:blowup_sig0} illustrates the difference between  blowup of $\| \omega \|_{L^2}$  for $\sigma=0$ and  $\sigma=1$. As noted earlier, finite-time singularity formation for $a=1/2, ~\sigma=0$ can occur for arbitrarily small $L^2$ norm of $\omega(x,0)$.  In the phase-plane analysis this is seen from the asymptotic behavior ${p}_b(v_c) \sim  2 v_c$ of the boundary of the blow-up region  ${\Omega}_b$
 when $v_c \ll 1$. Thus from (\ref{a0.5sigma0_periodic_solution_norms_real_p}),   $\| \omega(x,0) \|_{L^2}$ can be made arbitrarily small in ${\Omega}_b$.  In contrast, when $\sigma=1$ the solution (\ref{omegasigma1_periodic_lower_solution_1}) blows up only  for sufficiently large $L^2$ data, per the comment following the proof of Theorem \ref{thm:global_sig=1}.   The Wiener norm blows up only for sufficiently  large data in our pole dynamics solutions both for  $\sigma=0$ and $\sigma=1$.


\subsection{Numerical solution of (\ref{eq:p_sig=0})} \label{sec:AppendixANum}

\renewcommand{\figurename}{Figure}
\renewcommand\thefigure{12}

\begin{figure}[h!]
  \centering
    \includegraphics[width= 0.8 \textwidth]{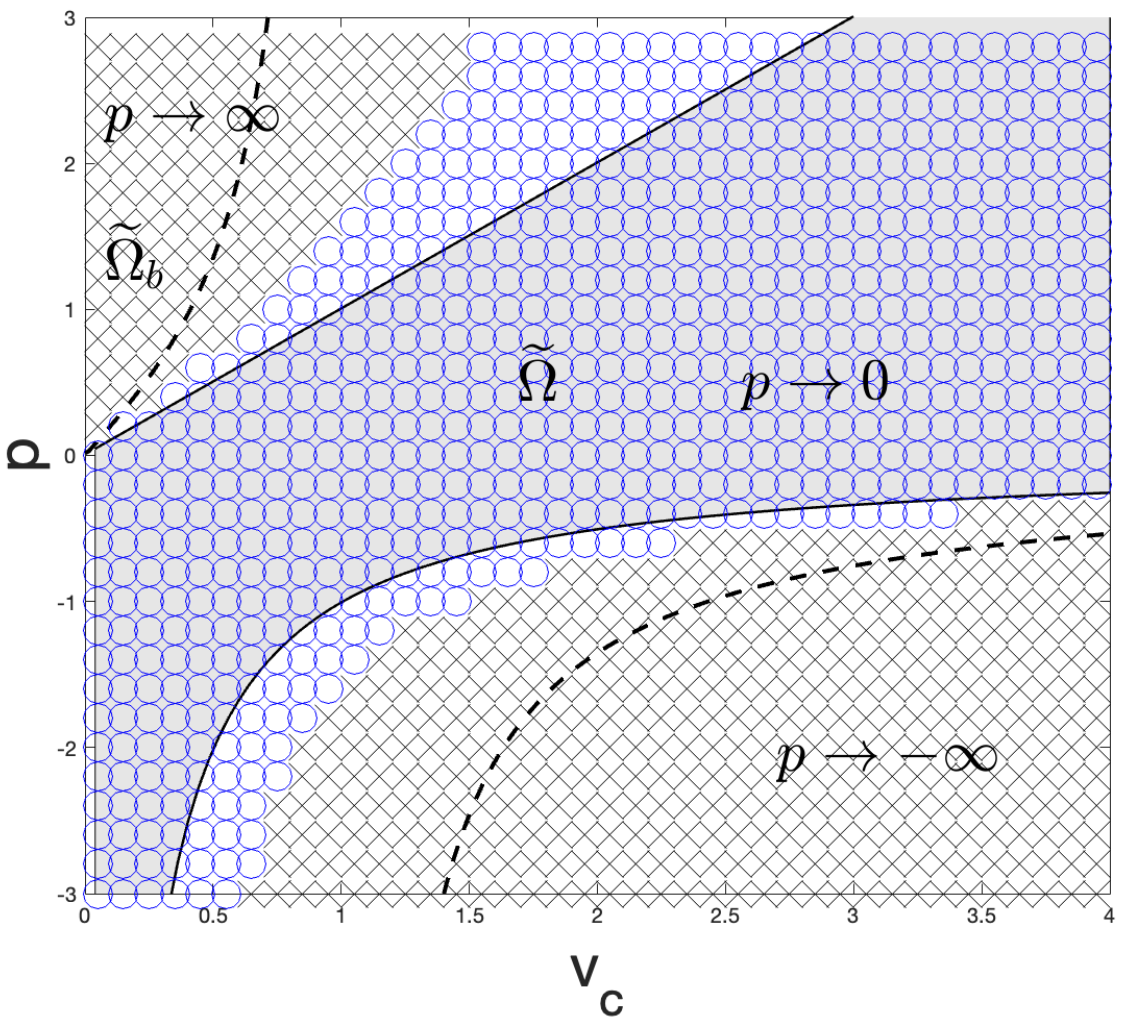}
     \caption{Comparison of numerical solution to (\ref{eq:p_sig=0}) with the phase-plane analysis. Circles and $\times$ markers  are numerically determined regions of global existence and blowup, respectively. Solid curves give the boundary of   ${\Omega}$, and dashed curves show the boundary of ${\Omega}_b$ and second blow-up region at lower right (cf. Figure \ref{fig:sig0_phase_plane}).}
  \label{fig:podesig0}
\end{figure}

\renewcommand{\figurename}{Figure}
\renewcommand\thefigure{13}

\begin{figure}[h!]
  \centering
\includegraphics[width= 0.8 \textwidth]{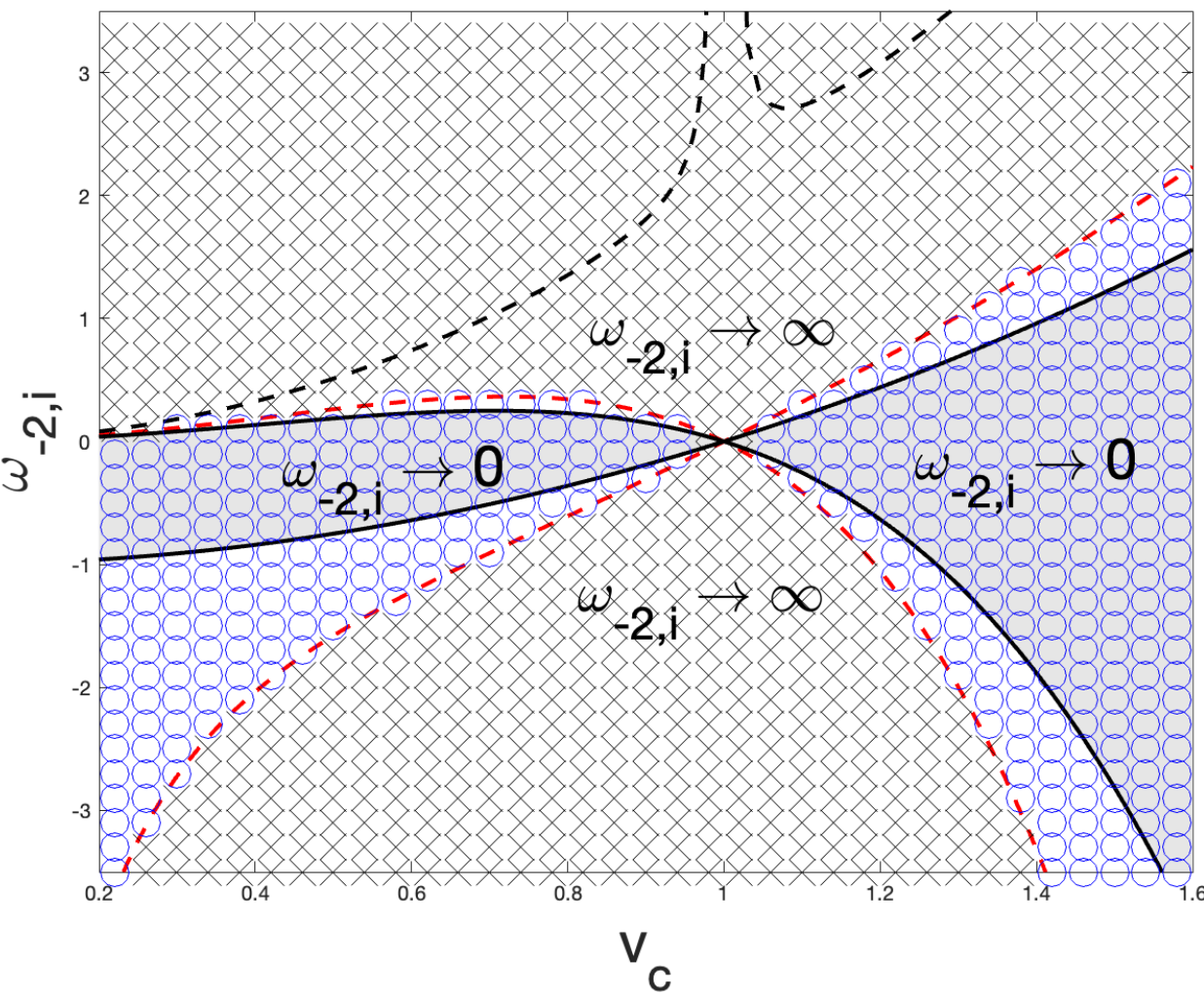}
  \caption{Numerical solution  in  the original variables $\omega_{-2,i}$ and $v_c$, plotted as in Figure \ref{fig:podesig0}.   The shaded domain and dashed-black curves demarcate the global existence region ${\Omega}$ and blowup regions, respectively,  from the phase-plane analysis. The red dashed curves indicate the boundary between the global existence and blowup regions as determined by the implicit solution from Section \ref{sec:a0.5sigma0}}
  \label{fig:coffee}
\end{figure}

Figure \ref{fig:podesig0} compares the numerical solution to (\ref{eq:p_sig=0}) with the phase-plane analysis. The blue circles  denote initial data $(v_c(0),p(0))$ for which the numerical solution appears to exist globally in time, and  are consistent with the global existence region ${\Omega}$ from analysis.   The `$\times$' markers denote initial data for which the numerical solution blows up in finite time. These are   consistent with the   analytically determined blowup regions  indicated by dashed curves.

The numerical solution and invariant regions from the phase-plane analysis are depicted in  the original variables $\omega$ and $v_c$  in Figure \ref{fig:coffee}.  These are compared to the implicit solution from Section \ref{sec:a0.5sigma0}.
The analytical results are again  consistent with the numerics.  In particular  the boundary between the global existence and blow-up regions from the implicit solution, indicated by the red-dashed curves, is in excellent agreement with the numerical results.






\end{document}